\documentclass[11pt,reqno]{amsart}  
\usepackage{color, amsmath,amssymb, amsfonts, amstext,amsthm, latexsym}
\usepackage[active]{srcltx}

\allowdisplaybreaks

\usepackage[normalem]{ulem}

\newcommand{\R}{{\mathbb R}}

\newcommand{\NN}{{\mathbb N}}
\newcommand{\RR}{{\mathbb R}}

\newcommand{\EX  }{{\mathbb E}}
\newcommand{\EE}{{\mathbb E}}
\newcommand{\WW}{{\mathcal W}}
\renewcommand{\SS}{{\mathcal S}}

\numberwithin{equation}{section}
\newtheorem{theorem}{Theorem}[section]

\newtheorem{lemma}[theorem]{Lemma}

\newtheorem{Th}[theorem]{Theorem}
\newtheorem{Prop}[theorem]{Proposition}

\usepackage{comment}
\title[Stochastic energy-critical NLS]
{The Energy-Critical Stochastic
nonlinear Schr\"odinger equation: Well-Posedness and Blow-up}

\author[A. Millet]{Annie Millet}
\address{SAMM, EA 4543,
Universit\'e Paris 1 Panth\'eon Sorbonne, 90 Rue de
Tolbiac, 75634 Paris Cedex France {\it and} LPSM, UMR 8001 
  Universit\'e de Paris et Sorbonne Universit\'e, France}
\email{annie.millet@univ-paris1.fr} 

\author[S. Roudenko]{Svetlana Roudenko}
\address{Department of Mathematics \& Statistics\\Florida International University,  Miami, FL, USA}
\curraddr{}
\email{sroudenko@fiu.edu}

\subjclass[2020]{60H15, 35R60, 35Q55}

\keywords{stochastic NLS equation, energy-critical, additive noise, multiplicative noise, Stratonovich integral, well-posedness, blow-up}

\begin{document}

\begin{abstract}
We investigate the focusing and defocusing energy-critical stochastic nonlinear Schr\"odinger equation, subject to random perturbations in the form of either additive or multiplicative (Stratonovich) noise. 
We establish local well-posedness for (random or deterministic) initial data $u_0$ in $\dot{H}^1(\mathbb{R}^n)$ or $H^1(\mathbb{R}^n)$, depending on the noise type. In the focusing case we provide quantitative estimates regarding the existence time and probability. Moreover, we derive blow-up criteria for solutions with positive energy in both cases of noise, provided that the noise intensity is sufficiently small, showing that blow-up occurs before a certain given positive time with positive probability, thus, extending deterministic results of Kenig-Merle \cite{KenMer} for the energy-critical NLS equation to the stochastic setting.
\end{abstract}

\maketitle

\tableofcontents

\section{Introduction}\label{s1}
We study the focusing energy-critical stochastic nonlinear Schr\"odinger (SNLS) equation  
subject to random perturbations of additive or multiplicative noise
\begin{align}\label{E:NLS}
\begin{cases} 
iu_t -\big(  \Delta u + \lambda \, |u|^{\frac4{n-2}} u \big) = \epsilon \, f(u), 
\quad (t,x)\in  [0,\infty) \times {\mathbb R}^n \, , \\
u(0,x)=u_0,
\end{cases}
\end{align}
where $\lambda=1$ stands for the focusing case and $\lambda=-1$ denotes the defocusing case, the initial data $u_0$ can be either deterministic or stochastic of $\dot H^1(\mathbb R^n)$ or $H^1(\mathbb R^n)$-type and the term $f(u)$ stands for a stochastic perturbation driven 
by the noise $W(dt,dx)$ white in time with some spatial regularity. 

In this paper we are interested in solutions 
in the {\it energy-critical} NLS equation (hence, the given nonlinearity) in the stochastic setting. The deterministic setting ($\epsilon = 0$) has been investigated in a breakthrough work of Kenig-Merle \cite{KenMer} in dimensions $n=3,4,5$, and that is the approach we take in this paper.

During their lifespans, solutions to the deterministic equation conserve the energy (or Hamiltonian) defined as
\begin{equation}\label{H}
H(u)=\frac{1}{2} \|\nabla u\|_{L^2(\RR^n)}^2 - \lambda\frac{n-2}{2n} \|u\|_{L^{\frac{2n}{n-2}}(\RR^n)}^{\frac{2n}{n-2}}.
\end{equation}
In some cases, when it is defined and finite, the mass (or $L^2$ norm) is also conserved, $M(u)=\|u\|_{L^2(\RR^n)}^2$, and so is the momentum
$\mathcal P (u) = \mbox{Im} \int \bar u(t,x) \, \nabla u(t,x) \, dx$. 

The deterministic NLS equation is invariant under the scaling: if $u(t,x)$ is a solution to \eqref{E:NLS} with $\epsilon=0$, 
then so is $u_\lambda(t,x) = \lambda^{\frac{n-2}2} u(\lambda^2 t, \lambda x)$. Under this scaling the Sobolev $\dot{H}^1_x$ norm of solutions is invariant, and that is why the NLS equation \eqref{E:NLS} with the given nonlinearity is called energy-critical.

Let 
\begin{equation}\label{E:Q} 
Q(x)= \Big( 1+\frac{|x|^2}{n(n-2)}\Big)^{-{\frac{n-2}{2}}},
\end{equation}
then $Q$ is a stationary solution to \eqref{E:NLS} with $\epsilon=0$, in other words, 
when the noise $W$ vanishes. Note that being time independent also means that $Q$ solves the non-linear elliptic equation 
$\Delta Q + Q^{\frac{n+2}{n-2}}=0.$ 
It is known that in this energy-critical case $Q$ is the only solution (up to symmetries) of  $\|Q\|_{L^{\frac{2n}{n-2}}(\mathbb R^n)} = C_n \|\nabla Q \|_{L^2(\mathbb R^n)}$, where $C_n$ is the sharp constant in the Sobolev inequality
\begin{equation}\label{E:Sobolev}
\|u\|_{L^{\frac{2n}{n-2}}(\RR^n)}\leq C_n \|\nabla u\|_{L^2{(\RR^n)}}, \quad  \forall\, u\in \dot{H}^1_x. 
\end{equation}
For the value of the constant $C_n$ and other details and references, e.g., see \cite[Sec. 5.1]{DR2015}.
This stationary solution $Q$ played an important role of a sharp threshold for solutions behavior in deterministic energy-critical NLS equation in a breakthrough work of Kenig-Merle \cite{KenMer}, which we recall next; the purpose of this paper is to extend this result to the stochastic setting with either additive or multiplicative noise. 

\begin{theorem}[\cite{KenMer}]\label{T:main-deter}
Let $u_0\in \dot H^1(\R^n)$, $n=3,4,5$. Suppose 
$H(u_0) < H(Q)$. 
\begin{itemize}
\item[{\bf (1)}] 
If 
$\|\nabla u_{0}\|_{L^2(\R^n)}<\| \nabla Q\|_{L^2(\R^n)}$,
then the solution $u(t)$ is global in both time directions and \\
$\|\nabla u(t)\|_{L^2(\R^n)} < \| \nabla Q\|_{L^2(\R^n)}$ for all times $t \in \mathbb R$.
Moreover, $u(t)$ scatters (approaches some linear NLS evolution in both time directions) in $\dot H^1(\mathbb R^n)$.

\item[{\bf (2)}] 
If 
$\|\nabla u_{0}\|_{L^2(\R^n)} > \|\nabla{Q}\|_{L^2(\R^n)}$,
then
$\|{\nabla u(t)}\|_{L^2(\R^n)} > \|{\nabla Q}\|_{L^2(\R^n)}
$ 
 for all $t \in I$ (the maximal time interval of existence).
Furthermore, if $|{x}| u_0\in L^2(\mathbb R^n)$ or $u$ is radial, then the solution blows up in finite time in both time directions. 
\end{itemize}
\end{theorem}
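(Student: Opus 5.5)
The plan is to follow Kenig--Merle, with the sharp Sobolev inequality \eqref{E:Sobolev} and the extremal $Q$ driving everything. We work with $\lambda=1$ and $\epsilon=0$.

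\emph{Step 1: energy trapping.} Put $y=\|\nabla u\|_{L^2}^2$ and $2^*=\frac{2n}{n-2}$. By \eqref{E:Sobolev},
\[
H(u)\ge g(y):=\tfrac12 y-\tfrac{n-2}{2n}C_n^{2^*} y^{\frac{n}{n-2}}.
\]
Since $Q$ is an extremal and $\|\nabla Q\|_{L^2}^2=\|Q\|_{L^{2^*}}^{2^*}$ (multiply $\Delta Q+Q^{\frac{n+2}{n-2}}=0$ by $Q$ and integrate), the function $g$ attains its unique maximum on $[0,\infty)$ at $y_Q=\|\nabla Q\|_{L^2}^2$, with $g(y_Q)=H(Q)=\frac1n\|\nabla Q\|_{L^2}^2$. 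Energy is conserved and $t\mapsto \|\nabla u(t)\|_{L^2}$ is continuous on the maximal interval $I$. Since $H(u(t))=H(u_0)<H(Q)$, the value $y(t)$ can never equal $y_Q$. Hence $y(t)$ stays on the side of $y_Q$ where it started, which proves the dichotomy in both {\bf (1)} and {\bf (2)}. Quantitatively, if $H(u_0)\le(1-\delta_0)H(Q)$, we obtain
\[
\|\nabla u(t)\|^2\le (1-\bar\delta)\|\nabla Q\|^2 \quad\text{in case (1)},\qquad \|\nabla u(t)\|^2\ge (1+\bar\delta)\|\nabla Q\|^2 \quad\text{in case (2)}.
\]
In case (1) we also get the coercivity bound $\int|\nabla u|^2-\int|u|^{2^*}\ge c\int|\nabla u|^2$.

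\emph{Step 2: global existence and scattering in (1).} Global existence is immediate: $\|\nabla u(t)\|$ is uniformly bounded, but in the critical case this alone does not rule out concentration, so we proceed by concentration--compactness and rigidity. Let $E_c$ be the supremum of energies below which every solution satisfying the trapping scatters. Small-data theory gives $E_c>0$. Suppose $E_c<H(Q)$. Using the profile decomposition (Keraani) together with long-time perturbation theory, we extract a critical element $u_c$ whose orbit is precompact modulo scaling and translation, $\{\lambda(t)^{-\frac{n-2}{2}}u_c(t,\cdot/\lambda(t)-x(t))\}$. In the radial case $x(t)=0$. We then rule out $u_c$ with a localized virial identity: $\frac{d^2}{dt^2}\int \phi_R|u|^2 \approx 8\big(\int|\nabla u|^2-\int|u|^{2^*}\big)\ge c\|\nabla u\|^2$. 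Compactness controls the error terms, and we integrate over a long time interval to reach a contradiction. This step is the main obstacle. It requires controlling $\lambda(t)$, which means showing via the compactness that $\lambda(t)$ cannot go to zero in finite time and normalizing $\lambda\ge A_0$, and, in the nonradial case, controlling $x(t)$ through vanishing momentum. Finite-time blow-up of the compact element is excluded separately, using mass concentration and a localized-mass argument.

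\emph{Step 3: blow-up in (2).} If $|x|u_0\in L^2$, the virial identity gives
\[
\frac{d^2}{dt^2}\int|x|^2|u|^2=8\Big(\int|\nabla u|^2-\int|u|^{2^*}\Big)=16n\,H(u)-\tfrac{16}{n-2}\|\nabla u\|^2 .
\]
By Step 1, $\|\nabla u\|^2\ge(1+\bar\delta)\|\nabla Q\|^2$, and $H(u)<H(Q)=\frac1n\|\nabla Q\|^2$. Together these make the right-hand side bounded above by a strictly negative constant, so the nonnegative quantity $\int|x|^2|u|^2$ would become negative in finite time; the solution therefore blows up in both directions. In the radial case we use a truncated weight $\phi_R$ in place of $|x|^2$. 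The resulting error terms are controlled by the Strauss radial decay estimate together with the $L^2$ bound in a large ball; the latter comes from the localized mass estimate, which is available when $u_0\in L^2$ and is otherwise handled by the Kenig--Merle truncation.
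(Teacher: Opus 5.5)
Your Steps 1 and 3 follow what the paper itself recalls: the trapping argument via $f$ in Section~\ref{S:time} and the virial identities in Section~\ref{S:B}. Step 2 is the concentration--compactness/rigidity scheme of \cite{KenMer}, which the paper only cites.

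The genuine gap is the nonradial case of part \textbf{(1)}. The cited result of \cite{KenMer} proves scattering only for radial data; the statement as quoted omits this hypothesis. Your phrase ``controlling $x(t)$ through vanishing momentum'' does not supply the missing argument. For an $\dot H^1$ critical element the momentum $\mathcal P(u)$ need not even be defined. So before any Galilean normalization to zero momentum, and before any bound $|x(t)|=o(t)$, you must show that $u_c$ has extra low-frequency regularity (finite mass). That step is the real difficulty of the nonradial problem. It was achieved only later, by Killip--Visan for $n\ge 5$ and by Dodson for $n=4$, with tools well beyond the Kenig--Merle argument (double Duhamel trick, long-time Strichartz estimates). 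It has not been done for $n=3$. As written, your Step 2 proves \textbf{(1)} only for radial $u_0$.

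The radial half of Step 3 has a similar problem. In \cite{KenMer} radial blow-up is proved for $u_0\in L^2$, and your fallback ``otherwise handled by the Kenig--Merle truncation'' is not an argument. The Hardy/localized-mass control used in the rigidity step relies on the uniform bound $\|\nabla u(t)\|_{L^2}<\|\nabla Q\|_{L^2}$, which is exactly what fails in case \textbf{(2)}. The far-field error terms of the truncated virial are controlled through the Strauss estimate only via the mass. The finite-variance case is fine, since $\|u_0\|_{L^2}^2\le \frac{2}{n}\|xu_0\|_{L^2}\|\nabla u_0\|_{L^2}$.

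There are also two smaller slips. First, the identity is $\frac{d^2}{dt^2}V=\frac{16n}{n-2}H(u)-\frac{16}{n-2}\|\nabla u\|_{L^2}^2$, not $16n\,H(u)-\frac{16}{n-2}\|\nabla u\|_{L^2}^2$. With your coefficient the right-hand side is not negative for $n=4,5$; with the correct one you get the bound $-\frac{16\bar\delta}{n-2}\|\nabla Q\|_{L^2}^2$. Second, global existence is not ``immediate'' from a uniform $\dot H^1$ bound in the critical case; it is a by-product of the scattering argument, as you yourself then indicate.
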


A similar result for the intercritical case (mass-supercritical and energy-subcritical) was developed by the second author and collaborators in \cite{Hol_Rou_2007, Hol_Rou_2008, DHR}, 
and some of it we extended to the stochastic setting in \cite{MilRou}. For that we relied on the local well-posedness result developed in the stochastic setting for the energy-subcritical NLS by de Bouard-Debussche in \cite{deB_Deb_CMP, deB_Deb_H1}.
In \cite{MilRou} we also obtained conditions for blow-up in finite time with positive  probability for positive energy (Hamiltonian) and a ``small" noise, thus, expanding on blow-up criteria for negative energy in \cite{deB_Deb_AnnProb, deB_Deb_PTRF}. In this paper besides obtaining local well-posedness, we also obtain a dichotomy {\it a la} Theorem \ref{T:main-deter}, including blow-up criteria for positive energy in the energy-critical case of the stochastic NLS equation. 
As in the deterministic case, to close a fixed-point argument (needed for local existence and uniqueness) in the energy-subcritical cases it is always possible to pull out some power of time $T$ via H\"older's inequality and then choose $T$ small enough to close the loop (for a contraction in the space considered for local well-posedness). 
In the energy-critical case, however, 
the power of $T$ in the contraction estimates vanishes due to the criticality of the nonlinearity. Consequently, local existence can not be established through a simple small-time argument, necessitating a more refined analysis in the spirit of \cite{KenMer}. We proceed in the same manner, however, the stochastic integral creates further challenges to control it. 
By carefully truncating the domain and utilizing uniform boundedness of noise spatial density and its smallness, we are able to handle the energy-critical case for the local 
well-posedness (see Theorems \ref{th_lwp-add} and \ref{th_lwp_mul}). 
We also prove that under the same conditions on $u_0$ as in the deterministic setting, which give global well-posedness (part (1) of Theorem \ref{T:main-deter}), 
in the stochastic setting we are able to provide information about the maximal existence time and show that when the strength of the noise goes to 0, this existence time goes to $\infty$ (see Theorems \ref{T*-add-genIC} and \ref{T*-multi-genIC}).
Under the conditions giving finite time blow-up in the deterministic case (part (2) of Theorem \ref{T:main-deter}), we prove that blow-up occurs with positive probability when the noise is sufficiently ``small" (see Theorems \ref{th_blowup-add} and \ref{th_blowup2}). 
Both results are consistent with what was proved in \cite{KenMer}.

Thus, this paper, in a sense, completes the dichotomy results for the stochastic NLS equation with additive and multiplicative (Stratonovich) noise. 
We mention that there are a few other works that examined the energy-critical NLS with forcing perturbations.
For example, in \cite{Oh_Oka} the authors obtain global well-posedness for the defocusing SNLS with additive noise and $\phi \in L^{0,1}_{2}$ (see other developments in the defocusing SNLS in \cite{Oh_Oka} and references therein); 
the SNLS with linear multiplicative noise via a rescaling approach is studied in \cite{BRZ2016} and  \cite{BRZ2017}, where 
the authors consider a {\it finite-dimensional} noise $W(t,x)=\sum_{j=1}^N \mu_j e_j(x) \beta_j(t)$ ($\mu_j$ are complex constants, the functions $e_j$ are smooth and bounded real-valued 
functions with derivatives with fast decay at infinity,  and $\beta_j$ are independent real-valued Brownian motions). 
In \cite{BRZ2016} local well-posedness is shown for nonlinearities that are $\dot H^1$-subcritical and $\dot H^1$-critical, and global well-posedness in the defocusing $\dot{H}^1$-subcritical case, and the focusing $L^2$-subcritical case. 
In \cite{BRZ2017}, the authors prove that in the $\dot H^1$-subcritical case (for more specific functions $e_j=f_j+c_j$ with $f_j\in C^\infty_b$ and $c_j\in {\mathbb R}$) in the non-conservative case, when  $Re(\mu_1)\neq 0$ (different from noise considered here and in works of de Bouard-Debussche), the probability of blow-up occurring 
either on $[0,T]$ or on  $[0,\infty)$, converges to 1 as $c_1\to \infty$ 
(depending on  the functions $f_j$), in some sense as the noise becomes very large. We remark that as in \cite{deB_Deb_AnnProb, deB_Deb_PTRF} 
the noise considered here is infinite-dimensional, conservative, and  sufficiently small, which allows us to show not only the well-posedness in the energy-critical 
case but also blow-up in finite time with positive probability for certain positive energy data. 
\smallskip

The paper is organized as follows: Section \ref{S:Prelim} reviews necessary notation,  interpolation inequalities, Strichartz estimates and it sets up the corresponding spaces for the well-posedness.   
Section \ref{S-lwp} establishes the local well-posedness of the energy-critical SNLS equation under both types of  
stochastic perturbations: specifically, assuming it is a Hilbert-Schmidt operator from $L^2$ to $H^1$ for the additive case, and a Radonifying operator from $L^2$ to
 $W^{1,\kappa}$ for the multiplicative case.  
In Section \ref{S:time}, for the focusing SNLS we provide  quantitative estimates for the time existence and its positive probability in both noise cases. 
Finally, in Section \ref{S:B} we derive the blow-up criteria for solutions with positive energy, specifying the noise smallness in both types of stochastic perturbation.

\subsection*{Acknowledgments} 
A.M.'s research has been conducted within the FP2M federation (CNRS FR 2036). S.R. was partially supported by the NSF grants DMS-2055130 and DMS-2452782.

\section{Preliminaries and notation}\label{S:Prelim}

We recall the Sobolev embedding of 
$\dot{H}^1(\RR^n)$ into 
$ L^{\frac{2n}{n-2}}(\RR^n)$, the inequality \eqref{E:Sobolev}, and state 
a slightly more general version, 
the embedding $\dot W^{1,\rho} (\mathbb{R}^n) \hookrightarrow L^{p}(\mathbb{R}^n)$, $\frac1{p} = \frac1{\rho} - \frac{1}{n}$, that we use in this paper, 
\begin{equation}\label{Sob-gen}
\|u\|_{L^p(\mathbb{R}^n)} \leq C \|\nabla u\|_{L^\rho(\mathbb{R}^n)}
\end{equation} 
for some (known) sharp constant $C$ (that depends on the dimension $n$ and integrability powers $p$ and $\rho$). 
\smallskip

For the arguments in the energy-critical setting that we study in this paper, following  \cite{KenMer}, it suffices to fix specific values of  $p$ and $\rho$ in \eqref{Sob}, thus, we define  
\begin{equation}    \label{gamma-rho}
p=\frac{2(n+2)}{n-2},  \quad \rho = \frac{2n(n+2)}{n^2+4}\quad \mbox{\rm  and }\quad \gamma = p.
\end{equation}
Then the pair $(\gamma, \rho)$ is called $L^2$-admissible and  $(\gamma,p)$ is an $\dot{H}^1$-admissible pair, if
$$
\frac{2}{\gamma} + \frac{n}{\rho} = \frac{n}{2} \quad \mbox{and} \quad  \frac{2}{\gamma} + \frac{n}{p} = \frac{n}{2}-1.
$$
For the given $p$ and $\rho$ in \eqref{gamma-rho}, we denote the sharp constant in Sobolev embedding \eqref{Sob-gen} by $C_{Sob}$, i.e., 
\begin{equation}\label{Sob}
\|u\|_{L^{\frac{2(n+2)}{n-2}}(\mathbb{R}^n)} \leq C_{Sob} \|\nabla u\|_{L^{\frac{2n(n+2)}{n^2+4}}(\mathbb{R}^n)}.
\end{equation}

Next, we recall the Schr\"odinger group $S(t) = e^{-it \Delta}$ (i.e., the propagator in the solution of the linear equation $i u_t = \Delta u$) and its time decay estimates. Let $\rho, \rho'$ be conjugate exponents, $\frac{1}{\rho} + \frac{1}{\rho'}=1$, then there exists a positive constant $C(\rho)$ such
that 
\begin{equation}		\label{decay}
\| S(t) u \|_{L^\rho_x} \leq C(\rho)\;  t^{-n(\frac{1}{2}-\frac{1}{\rho})}\;  \|u\|_{L^{\rho'}_x},\quad \forall u\in L^{\rho'}_x, ~~ \rho \geq 2.
\end{equation}

The following well-known Strichartz inequalities are satisfied by the Schr\"odinger group $S(t)$ for the pair $(\gamma, \rho)$ in \eqref{gamma-rho},
\begin{equation}\label{gamma-rho-2}
\| S(t) f\|_{L^\gamma_t L^\rho_x} \leq C_{Str} \|f\|_{L^2_x},  \end{equation}
\begin{equation}\label{Stri_int}
\Big\| \int_0^t S(t-s) g(s) ds \Big\|_{L^\gamma_{[0,T]} L^\rho_x} \leq C_{Str} \|g\|_{L^{q'}_{[0,T]}L^{r'}_x}		\end{equation}
for any $L^2$-admissible pair $(q,r)$ (not necessarily equal to $(\gamma, \rho)$).  
Note that $(\infty, 2)$ is an $L^2$-admissible pair. 
Given the Sobolev inequality \eqref{Sob} and $u(t,x) \in C_0^{\infty}(\mathbb R^{n+1})$, we also have 
$$
\|u\|_{L^\gamma_t  L^p_x} \leq C_{Sob} \|\nabla_x u\|_{L^\gamma_t  L^\rho_x},
$$
which motivates us to make the following notation. 

For any $T>0$,  let
\begin{equation} 	\label{def-S(T)-W(T)}
{\mathcal S}[T]= \{ u :  
\|u\|_{L^\gamma_{[0,T]}  L^p_x} <\infty \} \quad  \mbox{\rm and} \quad  {\mathcal W}[T]= \{ u  : 
\|  u \|_{L^\gamma_{[0,T]}  L^\rho_x}  <\infty\} ,
\end{equation} 
and for $u\in {\mathcal S}[T]$ (resp. $u\in {\mathcal W}[T]$), set 
$$
\qquad 
\|u\|_{{\mathcal S}[T]} := \|u\|_{L^\gamma_{[0,T]}  L^p_x} \qquad \mbox{(resp.} \quad \|u\|_{{\mathcal W}[T]} := \|u\|_{L^\gamma_{[0,T]} L^\rho_x}).
$$

As usual we denote by $C$ (resp. $C(c)$) a positive constant (resp. a positive constant depending on some parameter $c$) that can change from one line to the next. 

\section{Local well-posedness} \label{S-lwp}
We first have to prove local well-posedness in a stochastic setting. 

\subsection{Local well-posedness: additive  stochastic perturbation}\label{S-lwp-add}

Let $L^2(\RR^n)$ (resp. $H^s(\RR^n)$) denote the set of ${\mathbb C}$-valued  functions  of $L^2(\RR^n)$ (resp. $H^s(\RR^n)$). 
Let $(\Omega, {\mathcal F},P)$ be a probability space endowed with a filtration $({\mathcal F}_t)_{t\geq 0}$.
Let $(\beta_k)_{k\in \NN}$ be a sequence of independent real-valued Brownian motions on $[0,\infty)$ with respect to the filtration 
$({\mathcal F}_t)_{t\geq 0}$ and let $(e_k)_{k\in \NN}$ be an orthonormal basis of the set $L^2(\RR^n)$ of complex-valued square integrable
functions on $\RR^n$. We define a Wiener process on the space $L^2(\RR^n)$ as follows
\begin{equation} \label{R-noise-add}
W(t,\cdot,\omega)=\sum_{k=0}^\infty \beta_k(t,\omega) \phi \, e_k(\cdot),
\end{equation}
where $\phi$ is a bounded operator from $L^2(\RR^n)$ into itself.  

For any $s>0$, let $L^{0,s}_2$ denote the set of Hilbert-Schmidt operators from $L^2_x$ to $W^{s,2}_x$ and denote by
$\| \; .\; \|_{L^{0,s}_2}$ the corresponding norm.

In this section, we consider the following stochastic nonlinear Schr\"odinger equation in the energy-critical case  in dimension $n=3,4,5$, that is, for $u_0\in \dot{H}^1(\RR^n)$ a.s.  and $\lambda \in \{-1,1\}$, consider
\begin{equation}\label{NLS-additive}
i d_t u - (\Delta u + \lambda |u|^{\frac{4}{n-2}} u)  = dW_t, ~~ u(0) = u_0.
\end{equation}

Note that if $\phi$ is defined via a kernel ${\mathcal{K}}$, that is, for any square integrable function $u$, 
$\phi \,u(x)=\int_{\RR^n}{\mathcal K}(x,y) u(y) dy$, then the correlation function of the noise is 
\[ 
\qquad \qquad  \EX\Big( \frac{\partial W}{\partial t}(t,x) \frac{\partial W}{\partial t}(s,y)\Big) = c(x,y) \delta_{t-s}, \quad 
\mbox{\rm with} \quad c(x,y) = \int_{\RR^n} {\mathcal K}(x,z) {\mathcal K}(y,z) dz.\]

The following theorem provides a local well-posedness result in this case.
\begin{Th}      \label{th_lwp-add}
Let $n=3,4,5$. 
Define $\gamma, \rho, p$ as in \eqref{gamma-rho}, and let $u_0$ be ${\mathcal F}_0$-measurable such that
$\| u_0\|_{\dot{H}^1_x} \leq A$ for some $A>0$, and suppose that 
$\phi \in L^{0,1}_2$. 
Then  there exists a stopping time  $\tau(u_0) >0 $ a.s. such that the equation \eqref{NLS-additive} has a unique solution $u(t)$ on the interval $[0, \tau(u_0))$ and a.s. 
$u\in C([0,\tau(u_0)) ; \dot{H}^1(\RR^n))$;  furthermore, a.s. $u\in  L^\gamma(0,\tau(u_0) ; L^{p}(\RR^n))$ and 
$\nabla u \in  L^\gamma(\Omega, L^\gamma(0,\tau(u_0) ; L^\rho_x(\RR^n)) )$ 
\end{Th}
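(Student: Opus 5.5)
We work with the mild (Duhamel) formulation
\[
u(t)=S(t)u_0 + i\lambda\int_0^t S(t-s)\big(|u|^{\frac4{n-2}}u\big)(s)\,ds + i\int_0^t S(t-s)\,dW(s)=:S(t)u_0+\mathcal N(u)(t)+Z(t),
\]
and follow the Kenig--Merle fixed-point scheme, with the stochastic convolution $Z$ treated as a given random forcing term. So we first study $Z$ alone, and only then run a pathwise contraction.

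\textbf{Step 1 (the stochastic convolution).} Since $\phi\in L^{0,1}_2$, the stochastic Strichartz estimate (Burkholder/It\^o isometry in $L^2_x$, or the argument of de Bouard--Debussche for the $L^\gamma_tL^\rho_x$ norm using \eqref{decay}) gives, for every $T>0$,
\[
\EX\|\nabla Z\|^\gamma_{\WW[T]}+\EX\sup_{t\le T}\|\nabla Z(t)\|^2_{L^2_x}\le C(T)\,\|\phi\|^{\gamma\wedge 2}_{L^{0,1}_2}.
\]
Hence $Z\in C([0,T];\dot H^1)$ a.s., $\nabla Z\in L^\gamma(\Omega;\WW[T])$, and by \eqref{Sob} $Z\in\SS[T]$ a.s. Moreover, $T\mapsto\|\nabla Z\|_{\WW[T]}$ is continuous, adapted, and vanishes at $T=0$.

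\textbf{Step 2 (smallness via stopping times and the deterministic contraction).} Fix a small $\delta=\delta(n,A)$ to be chosen from the Strichartz and Sobolev constants. Define the stopping time
\[
\tau_1=\inf\{t\ge0:\ \|\nabla S(\cdot)u_0\|_{\WW[t]}+\|\nabla Z\|_{\WW[t]}\ge\delta\}.
\]
Since $\nabla S(\cdot)u_0\in L^\gamma_tL^\rho_x$ by \eqref{gamma-rho-2} with $\|u_0\|_{\dot H^1}\le A$, monotone convergence gives $\|\nabla S(\cdot)u_0\|_{\WW[t]}\to0$ as $t\to0$. Together with Step 1 this yields $\tau_1>0$ a.s. The point is that smallness comes from the linear evolution on short intervals, not from a power of $T$; this is exactly why we cannot simply shrink $T$ in the estimates.

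On $[0,\tau_1\wedge T]$ we contract $\Phi(v)=S(t)u_0+\mathcal N(v)+Z$ in the ball
\[
B=\{v:\ \|\nabla v\|_{\WW}\le 2\delta,\ \sup_t\|\nabla v\|_{L^2}\le 2A+\|\nabla Z\|_{L^\infty L^2}\}.
\]
We use \eqref{Stri_int} with the dual pair $(q',r')=(2,\frac{2n}{n+2})$, H\"older, and \eqref{Sob}:
\[
\|\nabla\mathcal N(v)\|_{\WW}+\|\nabla\mathcal N(v)\|_{L^\infty L^2}\le C\|v\|_{\SS}^{\frac4{n-2}}\|\nabla v\|_{\WW}\le C\delta^{\frac{n+2}{n-2}},
\]
together with the corresponding difference estimate. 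Contraction is taken in the metric $\|\nabla(\cdot)\|_{\WW}$, as in Kenig--Merle: in dimensions $n=3,4,5$ the nonlinearity is $C^1$ with a H\"older derivative, which suffices, and we can measure differences in $\SS$ or $\WW$ without derivatives and then use weak lower semicontinuity of the ball. All of this is done $\omega$ by $\omega$, so choosing $\delta$ with $C\delta^{\frac4{n-2}}<\tfrac12$ gives a unique fixed point.

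\textbf{Step 3 (adaptedness, continuity, uniqueness, integrability).} Adaptedness follows because the Picard iterates are adapted and $\tau_1$ is a stopping time. Continuity in $\dot H^1$ follows from the $L^\infty L^2$ Strichartz bound on $\mathcal N$ and the continuity of $Z$. Uniqueness is obtained in the class where $\nabla u\in\WW$ by a standard continuity argument on small subintervals. We then extend by iteration (restarting from $u(\tau_1)$) to a maximal stopping time $\tau(u_0)$.

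The integrability claim $\nabla u\in L^\gamma(\Omega;L^\gamma(0,\tau;L^\rho))$ should be read on the local interval $[0,\tau_1\wedge T]$, where $\|\nabla u\|_{\WW}\le2\delta$ deterministically. In general I would take $\tau(u_0)$ in the statement to be this truncated time, or a localized version of it.

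\textbf{Main obstacle.} The hardest step is Step 1: obtaining the $L^\gamma_tL^\rho_x$ bound for $\nabla Z$ with only $\phi\in L^{0,1}_2$, so that the stopping time is well defined and a.s. positive. I would first try to prove it via the factorization method (Da Prato--Kwapie\'n--Zabczyk), combined with \eqref{decay} and Gaussian moments of $\int_0^t S(t-s)\,dW(s)$ in $L^\rho_x$. If that fails, I would fall back on the Hilbert--Schmidt bound in $H^1$ plus the embedding $L^\infty_tH^1\subset L^\gamma_t W^{1,\rho}$ on finite intervals, which is available because $2\le\rho<\frac{2n}{n-2}$.
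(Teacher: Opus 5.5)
Your proposal is correct, but it parametrizes the fixed point differently from the paper.

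\textbf{How the paper does it.} The paper follows Kenig--Merle literally. It works in ${\mathcal X}_{[\tau]}(a,b)$, where only $\|u\|_{{\mathcal S}[\tau]}\le a$ is small, while $\|\nabla u\|_{{\mathcal W}[\tau]}\le b=3C_{Str}A$ is merely bounded. It uses two stopping mechanisms: $\tau_0(\delta)$ makes $\|S(\cdot)u_0\|_{{\mathcal S}}$ small, and $\tau(\alpha,\beta)$ keeps $\|I\|_{{\mathcal S}}\le a/3$ and $\|\nabla I\|_{{\mathcal W}}\le b/3$. Self-mapping and contraction then come from the factors $a^{4/(n-2)}$ and $a^{(6-n)/(n-2)}b$. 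This is exactly where the paper invokes $n<6$.

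\textbf{How you do it.} You stop when $\|\nabla S(\cdot)u_0\|_{{\mathcal W}[t]}+\|\nabla Z\|_{{\mathcal W}[t]}$ reaches $\delta$, and make $\|\nabla v\|_{{\mathcal W}}$ itself small. Since $\|v\|_{{\mathcal S}}\le C_{Sob}\|\nabla v\|_{{\mathcal W}}$, one parameter controls everything. No large constant $b$ appears, and the term $a^{(6-n)/(n-2)}b$ never arises. The only dimensional input you need is $\frac{4}{n-2}\ge 1$, for the derivative-difference inequality. So for $n=3,4,5$ you can contract directly in $\|\nabla\cdot\|_{{\mathcal W}}$; the weaker-metric device you mention is only needed when $\frac4{n-2}<1$.

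\textbf{What each buys.} Your route requires smallness of the linear flow in the stronger $\nabla$-norm, which may shorten the existence time. For a local result on a random short interval this costs nothing, since $t\mapsto\|\nabla S(\cdot)u_0\|_{{\mathcal W}[t]}$ is continuous and vanishes at $0$. The paper's ${\mathcal S}$-smallness parametrization is the one that matters in the deterministic large-data perturbation theory, but it is not exploited in this paper.

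Two further points compare favourably. Your uniqueness claim, in the whole class $\nabla u\in{\mathcal W}$ and pathwise since the noise cancels in differences, is stronger than the paper's uniqueness in the ball. Your caveat that $\nabla u\in L^\gamma(\Omega;L^\gamma(0,\tau(u_0);L^\rho_x))$ should be read on the local interval matches what the paper's proof actually delivers: an a.s. bound $\|\nabla u\|_{{\mathcal W}[\tilde\tau]}\le b$ on the local interval only.

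\textbf{Correction to Step 1.} Your fallback fails: $H^1\not\subset W^{1,\rho}$ for $\rho>2$. With $\phi\in L^{0,1}_2$ you only have $\nabla Z(t)\in L^2_x$ at fixed times, and the $L^\gamma_tL^\rho_x$ bound on $\nabla Z$ is a genuinely time-averaged Strichartz effect. Likewise \eqref{decay} is the wrong tool here, since it would require $\nabla(\phi e_k)\in L^{\rho'}_x$.

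The argument that works, and the one the paper uses following de Bouard--Debussche, runs as follows.
\begin{itemize}
\item For fixed $t$, $\nabla Z(t)$ is Gaussian, so $\EE\|\nabla Z(t)\|^\gamma_{L^\rho_x}$ is controlled by $\big\|\big(\EE|\nabla Z(t,\cdot)|^2\big)^{1/2}\big\|^\gamma_{L^\rho_x}$.
\item Minkowski's inequality in $L^{\rho/2}_x$ bounds $\big\|\EE|\nabla Z(t,\cdot)|^2\big\|_{L^{\rho/2}_x}$ by $\sum_k\int_0^t\|S(s)\nabla(\phi e_k)\|^2_{L^\rho_x}\,ds$.
\item H\"older's inequality in time plus the homogeneous estimate \eqref{gamma-rho-2} bound this by $C_{Str}^2\,t^{1-2/\gamma}\|\phi\|^2_{L^{0,1}_2}$.
\end{itemize}
The paper first runs this for approximations $\phi_n\in L^{0,2}_2$ and then passes to the limit.
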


\begin{proof}
The Duhamel formula shows that we look for a process $u$ such that 
\begin{equation} 	\label{Duhamel-add}
u(t) = S(t) u_0 - i\lambda \int_0^t S(t-s)  |u(s)|^{\frac{4}{n-2}} u(s) ds 
 + i\int_0^t S(t-s) dW(s). 
\end{equation}

Let $a,b,T>0$ (to be chosen later) and, recalling \eqref{def-S(T)-W(T)}, we denote by ${\mathcal X}_{[T]}(a,b)$ the set of processes $u: \Omega \times [0,T] \times \RR^n \to {\mathbb C}$ such that 
$$ 
\|  u\|_{{\mathcal S}[T]} \leq a\; \mbox{\rm a.s.}  \quad \mbox{\rm and} \quad    \|\nabla u\|_{ {\mathcal W}[T]} \leq b\; \mbox{\rm a.s.}, 
$$ 
then for $ u \in {\mathcal X}_{[T]}(a,b)$ we define 
$$
\|u\|_{{\mathcal X}_{[T]}(a,b)} :=\| u\|_{{\mathcal S}[T]} +  \|\nabla u\|_{ {\mathcal W}[T]}.
$$

Let  ${\mathcal G}$ denote the map defined on ${\mathcal X}_{[T]} (a,b)$
by ${\mathcal G}(u)(t) = S(t) u_0 + {\mathcal T}(u)(t) + I(t)$, where
\begin{align*}
{\mathcal T}(u)(t) = &\;  -i \lambda \int_0^t 
S(t-s)  |u(s)|^{\frac{4}{n-2}} u(s) ds, \quad 
I(t)=\;  i \int_0^t S(t-s) dW(s). 
\end{align*}

{\bf Step 1.}  
We first upper estimate the term involving the initial condition. Using $\nabla S(t)=S(t) \nabla$ and $\| u_0\|_{\dot{H}^1_x} \leq A$ a.s.,  the Strichartz inequality \eqref{gamma-rho-2} implies  
\begin{equation}\label{upper-ICW}
\| \nabla S(\cdot) u_0\|_{{\mathcal W}{[T]}} \leq  C_{Str} \|\nabla u_0\|_{L^2_x} \leq  C_{Str} A \quad \mbox{\rm  a.s.   for any}\quad   T>0. 
\end{equation}

We now show that for any $\delta >0$ one can choose $T>0$ such that $\| S(\cdot) u_0\|_{{\mathcal S}{[T]}} \leq \delta$ a.s.
Indeed,  applying the Sobolev embedding \eqref{Sob}, we deduce that  for any $T>0$ we have $\| S(\cdot) u_0 \|_{{\mathcal S}{[T]}} \leq C_{Sob} \, C_{Str} A$ a.s.
Since for every $T>0$, the map $t\in [0,T] \mapsto    \| S(\cdot) u_0 \|_{{\mathcal S}{[t]}} $ is a.s. continuous, starts from 0 when $t=0$, and is upper bounded, we deduce that given any $\delta>0$ and almost every $\omega$, there exists a (random, ${\mathcal F}_0$-measurable) time $\tau_0(\delta)$ such that 
\begin{equation}\label{upper-ICS}
\| S(\cdot) u_0 (\omega) \|_{{\mathcal S}{[\tau_0(\delta)(\omega) ]}} \leq \delta\quad \mbox{\rm  a.s.}
\end{equation} 
\smallskip
     
We next prove an upper estimate for $\| \nabla I\|_{L^\gamma(\Omega; {\mathcal W}{[T]} ) }$ in terms of $\phi$ and $T$. 
  
First, observe that $\nabla I(t)=i \int_0^t S(t-s) d(\nabla W(s))$. Using the It\^o isometry and the fact that $S(\cdot)$ is an $L^2_x$-isometry,  we deduce that 
\[ \EE\big( \|\nabla  I(t,\cdot)\|_{L^2_x}^2 \big) = \sum_{k\geq 0}  \int_0^t \int_{\RR^n} |S(t-s) \nabla (\phi e_k))(x) |^2 dx ds= \sum_{k\geq 0}t  \|\nabla \phi e_k\|_{{L^2_x}}^2 =t \| \phi\|_{L^{0,1}_2}^2.\]

Let $\{ \phi_n\}_{n}$ be a sequence of elements of $L^{0,2}_2$ such that $\phi_n \to \phi$ in $L^{0,1}_2$ (with $\phi \in  L^{0,1}_2 $) 
 as $n\to \infty$, $\{W_n\}_n$ be the sequence of corresponding Brownian motions and $I_n(t)=\int_0^t S(t-s) dW_n(s)$. 
Then $\EE\big( \|\Delta  I_n(t,\cdot)\|_{L^2_x}^2 \big) = \int_0^t  \sum_{k\geq 0}\| \Delta \phi_n e_k\|_{{L^2_x}}^2  ds =t \| \phi_n\|_{L^{0,2}_2}^2$. Therefore, $\nabla I_n(t)$ 
is an $H^1_x$-valued Gaussian process, and
the embedding $\dot H^1_x\subset L^{\rho}_x$ implies that it is also an $L^\rho_x$-valued Gaussian process. Hence, the moments of $\| \nabla I_n(t)\|_{L^\rho_x}$ are comparable. 
Furthermore, the It\^o isometry implies that $I_n $ converges to $I$ in $L^\gamma(\Omega ; L^2_{[0,T]} H^1_x)$. 
We then proceed as in the proof of \cite[Theorem~3.1]{deB_Deb_H1}.    
The Fubini theorem implies the existence of  positive  constants $c_1$ depending on $\nu,\rho$ and $c_2$ depending on $\rho$ (and independent of $n$) such that for any $\nu\geq 1$ we get
\begin{align*}
\EE\Big( \int_0^T \|\nabla I_n(t)\|_{L^\rho_x}^\nu dt\Big) =  &
 \int_0^T \EE\big( \| \nabla I_n(t)\|_{L^\rho_x}^\nu\big) dt 
  \leq  \; c_1 \int_0^T \Big\{ \EE\big( \|\nabla I_n(t)\|_{L^\rho_x}^\rho\big) \Big\}^{\frac{\nu}{\rho}} \, dt\\
  \leq &\; c_1 \int_0^T \Big\{ \int_{\RR^n} \;  c_2  \big[  \EE\big( |\nabla I_n(t,x)|^2\big) \big]^{\frac{\rho}{2}} dx \Big\}^{\frac{\nu}{\rho}} \; dt, 
  \end{align*}
where in the last estimate we used once more the Fubini theorem and the fact that the random variable $\nabla I_n(t,x)$ is Gaussian (and thus, any power $\nu \geq 1$ can be used).

Let $\gamma, \rho$ be defined by \eqref{gamma-rho}. Then the Minkowski inequality applied to the $\|\cdot\|_{L^{{\rho}/{2}}_x}$ 
  (for the product of the counting measure on $\NN$  and $1_{[0,t]} ds$) yields 
\begin{align*}
\Big\{ \int_{\RR^n} \big[  \EE\big( |\nabla I_n(t,x)|^2\big) \big]^{\frac{\rho}{2}} dx \Big\}^{\frac{2}{\rho}}  \leq &\sum_{k\geq 0}  
  \int_0^t \Big\{ \int_{\RR^n}    |S(t-s) (\nabla \phi_n e_k)(x)|^\rho dx  \Big\}^{\frac{2}{\rho}} ds
 \leq \sum_{k\geq 0}  \| S(\cdot) (\nabla \phi_n e_k)\|_{L^2_{[0,t]} L^\rho_x}^2 \\
 \leq & \sum_{k\geq 0} \Big\{ \int_0^t \| S(\cdot)(\nabla \phi_n e_k)\|_{L^\rho_x}^\gamma ds \Big\}^{\frac{2}{\gamma}} t^{2-\frac{2}{\gamma}},
\end{align*} 
where the last inequality follows from H\"older's inequality and 
the fact that we use the $L^2$-admissible pair $(\gamma, \rho)$, and thus, $\gamma >2$. 
Strichartz's inequality \eqref{gamma-rho-2} implies $\|S(\cdot) (\nabla \phi_n e_k)\|_{L^\gamma_{[0,t]} L^\rho_x} \leq C_{Str} 
\| \nabla(\phi _ne_k)\|_{L^2_x}$  for every $k\geq 0$. Therefore, 
\begin{align*}	
\EE\Big( \int_0^T \| \nabla I_n(t)\|_{L^\rho_x}^\gamma dt\Big) & \leq c_1\, c_2^{\frac{\gamma}{\rho}} \, \int_0^T C_{Str}^\gamma  \| \phi_n\|_{L{_2^{0,1}}}^\gamma t^{\gamma-1} dt 
\leq C(\gamma, \rho) \, C_{Str}^\gamma \|\phi_n\|_{L^{0,1}_2}^\gamma T^\gamma.  
\end{align*} 
This estimate proves that the sequence $\{ \nabla I_n\}_n$ is bounded 
in  $L^\gamma(\Omega ; {\mathcal W}{[T]})$. 
Since it converges to $\nabla I$ in $L^2(\Omega ; L^\infty_{[0,T]} L^2_x)$, its {weak limit in $L^\gamma(\Omega ; {\mathcal W}{[T]})$}  is $\nabla I$; letting $n\to \infty$, we deduce 
\begin{equation}		\label{mom-IW}
 \| \nabla I\|_{L^\gamma(\Omega ; {\mathcal W}{[T]})} \leq C(\gamma, \rho) \, C_{Str} \|\phi\|_{L^{0,1}_2} T, 
\end{equation}
while the Sobolev embedding  \eqref{Sob} yields
\begin{equation}		\label{mom-IS}
\| I\|_{L^\gamma(\Omega ; {\mathcal S}[T])} \leq C(\gamma, \rho) C_{Sob} C_{Str} \|\phi\|_{L^{0,1}_2} T.
\end{equation}
Hence, $\|I\|_{{\mathcal S}[T]} + \| \nabla I\|_{{\mathcal W}[T]} < \infty$ a.s., thus, given positive constants $\alpha,\beta$, we may define the stopping time
\begin{equation}		\label{def_tau-ab}
\tau(\alpha,\beta) = \inf\{ t>0 \; : \; \|I\|_{{\mathcal S}[t]} \geq \alpha\} \wedge \inf\{ t>0\; : \; \| \nabla I\|_{{\mathcal W}[t]} \geq \beta\}.
\end{equation}
\smallskip

We next upper estimate the non-linear term ${\mathcal T}(u)$ and follow computations from \cite{KenMer}. Once more, using \eqref{Sob}, we only need to bound above 
$\| \nabla {\mathcal T}(u)\|_{{\mathcal W}[T]}$. 
Using \eqref{Stri_int} and setting $\sigma = \frac{2}{n-2}$, we deduce for every $L^2$-admissible pair $(q,r)$ that 
\[ 
\|\nabla {\mathcal T}(u)\| _{L^\gamma_{[0,T]} L^\rho_x} \leq C_{Str}  (2\sigma +1) 
\| |u(s)|^{2\sigma} \nabla u(s) \|_{L^{q'}_{[0,T]} L^{r'}_x} .\]
Take $q=2$ and $r=\frac{2n}{n-2}$, so $(q,r)$ is an $L^2$-admissible pair with $r'=\frac{2n}{n+2}$, which implies $\frac{1}{r'} = \frac{2\sigma}{p} + \frac{1}{\rho}$. Hence,  H\"older's inequality
implies that $\| |u|^{2\sigma} \nabla u\|_{L^{r'}_x} \leq \|u\|_{L^p_x}^{2\sigma}  \|\nabla u\|_{L^\rho_x}$. 

Furthermore, $\frac{2\sigma +1}{\gamma} = \frac{1}{2}$. Hence,  H\"older's inequality applied to the measure $ dt$ on the set $ [0,T]$  implies that for every $T>0$
\begin{align}\label{upper_T(u)-W}
\| \nabla {\mathcal T}(u) \|_{{\mathcal W}[T]}  \leq & \; C_{Str} (2\sigma +1) \|u\|_{ {\mathcal S}[T]}^{2\sigma }
\, \|\nabla u\|_{ {\mathcal W}[T]} ,   \\
\|  {\mathcal T}(u) \|_{ {\mathcal S}[T]} \leq  & \; C_{Sob} C_{Str} (2\sigma +1) \|u\|_{{\mathcal S}[T]}^{2\sigma }
\, \|\nabla u\|_{ {\mathcal W}[T]}. 		\label{upper_T(u)-S} 
\end{align} 
We next choose $a,b$ such that if some random $\tau$
is small enough, then ${\mathcal G}$ maps $ {\mathcal X}_{[{\tau}]}(a,b)$ into itself a.s. Collecting the estimates \eqref{upper-ICW}--\eqref{upper_T(u)-S}, we deduce that 
for positive constants $\alpha$, $\beta$, $\delta$, determined later, and setting 
$\tau:=\tau_0(\delta) \wedge \tau(\alpha,\beta)$, we have 
\begin{align*}
 \| \nabla {\mathcal G}(u)\|_{{\mathcal W}[\tau]} \leq &\;  C_{Str} \; \Big[ A + \frac{n+2}{n-2} \,  a^{\frac{4}{n-2} } \, b \Big] + \beta , \\ 
  \|  {\mathcal G}(u)\|_{ {\mathcal S}[\tau]} \leq & \; \delta +  C_{Sob}\,  C_{Str}  \;  \frac{n+2}{n-2} \,  a^{\frac{4}{n-2} } \, b  + \alpha. 
 \end{align*}
Now, set $b=3 C_{Str} A$ and $\beta = \frac{b}{3}$. Choose $a>0$  such that 
\[ C_{Str}\, \frac{n+2}{n-2} \,  a^{\frac{4}{n-2} } \leq \frac{1}{3} \quad \mbox{\rm  and } \quad 
C_{Sob} C_{Str}\, \frac{n+2}{n-2} \,  a^{\frac{6-n}{n-2} } \, b \leq \frac{1}{3},
\]
noting that for the last inequality we need $6-n>0$, therefore, the restriction on the dimension $n=3,4,5$. 
Let $\alpha = \frac{a}{3}$ and  $\delta = \frac{a}{3}$ (which defines $\tau$). We then obtain that ${\mathcal G}$ maps  $ {\mathcal X}_{[{\tau}]}(a,b)$ into itself a.s.

Note also that if we keep $b=3 C_{Str} A$ and $\beta = \frac{b}{3}$, but choose $\tilde{a}\leq a$ with $\tilde{\delta} = \tilde{\alpha}=   \frac{\tilde{a}}{3}$, then for $\tilde{\tau}:=\tau_0(\tilde{\delta}) \wedge
\tau(\tilde{a},{b})$ we obtain that ${\mathcal G}$ maps  $ {\mathcal X}_{[{\tilde{\tau}}]}(\tilde{a},b)$ into itself a.s.
\smallskip

{\bf Step 2.} We next prove that for the previous choice of  $a$,$b$, and $\tau$, the mapping ${\mathcal G}$ is a contraction of ${\mathcal X}_{[\tau]}(a,b)$ a.s. Observe that for $u_1, u_2$, we have
${\mathcal G}(u_1)- {\mathcal G}(u_2) = {\mathcal T}(u_1) -  {\mathcal T}(u_2) $. 
Note that
\[ \big| \nabla \big( |u_1|^{2\sigma} u_1 - |u_2|^{2\sigma} u_2\big) \big| \leq (2\sigma +1) \big[  |u_1|^{2\sigma} |\nabla (u_1-u_2)|  + 2\sigma (|u_1|^{2\sigma-1} 
+ |u_2|^{2\sigma -1}) |u_1-u_2| |\nabla u_2| \big]
.\]
Using once more \eqref{Stri_int} with the $L^2$-admissible pair $(q,r)$, where $q=2$ and $r=\frac{2n}{n-2}$ (which implies 
$r'=\frac{2n}{n+2}$ and $\frac{1}{2}=\frac{\sigma+1}{\gamma}$) and H\"older's inequality, we obtain
\begin{align*}
 \| \nabla {\mathcal T}(u_1) - \nabla {\mathcal T}(u_2)\|_{{\mathcal W}[\tau]} \leq &\; C_{Str} (2\sigma +1) \big[ \|u_1\|_{{\mathcal S}[\tau]}^{2\sigma} \| \nabla (u_1-u_2)\|_{{\mathcal W}[\tau]} \\
 &\qquad \quad + 2\sigma \big( \|u_1\|_{{\mathcal S}[\tau]}^{2\sigma -1} + \| u_2\|_{{\mathcal S}[\tau]}^{2\sigma -1} \big) \|u_1-u_2\|_{{\mathcal S}[\tau]} \| \nabla u_2\|_{{\mathcal W}[\tau]}
 \big] \\
 \leq &\; C_{Str} \; \frac{n+2}{n-2} \; \Big[ a^{\frac{4}{n-2}} \| \nabla(u_1-u_2)\|_{{\mathcal W}[\tau]} + \frac{8}{n-2} \; a^{\frac{6-n}{n-2}} \, b\, \|u_1-u_2\|_{{\mathcal S}[\tau]}\Big] \\
 \leq &\; C_{Str} \; \frac{n+2}{n-2} a^{\frac{6-n}{n-2}} \max \Big(a,\frac{8 b}{n-2}\Big) \, \big[ \|u_1-u_2\|_{{\mathcal S}[\tau]} + \| \nabla (u_1-u_2)\|_{{\mathcal W}[\tau]}\big] .
 \end{align*}
Since $\| {\mathcal T}(u_1)-{\mathcal T}(u_2)\|_{{\mathcal S}[\tau]} \leq C_{Sob} \| \nabla {\mathcal T}(u_1)-\nabla {\mathcal T}(u_2)\|_{  {\mathcal W}[\tau]} $, we deduce
\[ \| {\mathcal T}(u_1)-{\mathcal T}(u_2)\|_{{\mathcal X}_{[\tau]}(a,b)} \leq (1+C_{Sob}) C_{Str} \;  \frac{n+2}{n-2}\,  a^{\frac{6-n}{n-2}} \max \Big(a,\frac{8 b}{n-2}\Big) \, 
 \|u_1-u_2\|_{{\mathcal X}_{[\tau]}(a,b)} .\]
Choose  $b$ as in Step 1 and take $\tilde{a}$ so that  
\[ {(1+C_{Sob})} C_{Str}\, \frac{n+2}{n-2} \,  \tilde{a}^{\frac{4}{n-2} } \leq \frac{1}{3} \qquad  \mbox{\rm and}  \qquad 
{(1+C_{Sob}) } C_{Str}\, \frac{n+2}{n-2} \,  \tilde{a}^{\frac{6-n}{n-2} } \, \frac{8}{n-2} b \leq \frac{1}{3}.
\] 
Then set $\tilde{\delta}=\frac{\tilde{a}}{3}$. 
Hence, for $\tilde{\tau}$ defined as at the end of Step 1, we 
obtain
$\| {\mathcal G}(u_1)-{\mathcal G}(u_2)\|_{{\mathcal X}[\tilde{\tau}]} \leq \frac{2}{3} \|u_1-u_2\|_{{\mathcal X}[\tilde{\tau}]}$, thus, ${\mathcal G}$ is a contraction, and therefore,
 there exists a unique element in ${\mathcal X}_{[\tilde{\tau}]}(a,b)$ such that ${\mathcal G}(u)=u$, and hence, $u$ a unique solution to \eqref{Duhamel-add}.

Let $\tau(u_0)$ denote the supremum of all stopping times $\tilde{\tau}$ such that the solution of 
\eqref{Duhamel-add} exists and is unique on the random time interval $[0, \tilde{\tau}]$. Note that  the solution $u(t)$ is now defined on $[0,\tau(u_0))$.

We next prove that $u\in C([0,\tilde{\tau}); \dot{H}^1_x)$ a.s. for any $\tilde \tau < \tau(u_0)$.  Since $u_0\in \dot{H}^1_x$ a.s., the linear flow $S(\cdot) u_0 \in C([0,\tilde \tau] ; \dot{H}^1_x$) a.s.
Then recalling that $(\infty,2)$ and $(2,r)$ (with $r=\frac{2n}{n-2}$) are the $L^2$-admissible pairs, the Strichartz estimate
\eqref{Stri_int} and the computations proving \eqref{upper_T(u)-W}  yield 
\[  \|\nabla {\mathcal T}(u)\|_{L^\infty_{(0,\tilde \tau)} 
L^2_x} \leq C (2\sigma +1) \|u\|_{{\mathcal X}_{[\tilde \tau]}(a,b)}^{\frac{n+2}{n-2}} <\infty.\] 
Since $ \nabla {\mathcal T}(u)(\cdot)$ is a.s. continuous on the (random) time interval $[0, \tilde \tau]$,   
and $I(\cdot)\in C([0,\tilde \tau]; \dot{H}^1_x)$, 
the same holds on the interval $[0,\tau(u_0))$; then
the assumption $\phi \in L^{0,1}_2$ and the It\^o isometry complete the proof.
\end{proof} 

\subsection{Local well-posedness: multiplicative  stochastic perturbation}\label{S-lwp-multi}
Denoting $H=L^2_{\RR}(\RR^n):= L^2(\RR^n;\RR)$  and letting  $B$ to be a martingale type-2 Banach space  (such as Sobolev spaces $W^{k,p}_{\RR}(\RR^n)$), 
we  consider bounded operators from $H$ into $B$. In that case, 
Hilbert-Schmidt operators are replaced by $\gamma$-radonifying ones (see e.g. \cite{Brz_1997, Brz_Mil}). 
The set of Radonifying operators from $H$ to $B$ is denoted by $R(H,B)$ and endowed with the norm
$$ 
\|\phi\|_{R(H,B)}^2 = \tilde{\mathbb E} \Big( \big| \sum_{k\geq 1}\gamma_k \phi e_k\big|_B^2 \Big), 
$$
where $\{e_k\}_{k\geq 1} $ is any orthonormal basis of $H$ and $\{ \gamma_k\}_{k\geq 1}$ is any sequence of independent real-valued standard Gaussian random variables on some probability space 
$(\tilde{\Omega}, \tilde{\mathcal F}, \tilde{P})$. For conciseness, we also denote $F_\phi := \sum_{k\geq 0} (\phi e_k)^2$.
In this section while considering multiplicative noise, we suppose that the operator $\phi$ is Radonifying from $L^2$ to some Sobolev space $W^{1,\kappa}$, to be precise, 
\begin{equation}\label{E:Radon}
\qquad \phi \in  R\big(L^2(\RR^n), W^{1,\kappa} (\RR^n)\big),  ~~~~\kappa = \frac{n(n+2)}{n-2}.
\end{equation}
\smallskip

Let $u_0\in \dot{H}^1 (\mathbb R^n)$ a.s. and consider the stochastic nonlinear Sch\"odinger equation with Stratonovich multiplicative noise 
(in order to enforce mass conservation when the noise is real-valued). Note that to prove local well-posedness, since mass conservation is not needed, we
can assume that the noise is complex-valued.
\begin{equation}\label{E:SNLS-m} 
i d_t u - \big( \Delta u + \lambda |u|^{\frac{4}{n-2}} u\big) = u \circ dW_t,\quad u(0)=u_0. 
\end{equation}

Rewriting the equation \eqref{E:SNLS-m} in the integral form using It\^o's stochastic integral, we obtain 
\begin{equation}\label{NLS_Ito_critical}
\begin{aligned}
u(t) = &S(t) u_0 - i\lambda \int_0^t S(t-s) \big( |u(s)|^{\frac{4}{n-2}} u(s)\big) ds\\ 
&- i \int_0^t  S(t-s) \big[ u(s) dW(s)\big] - \frac{1}{2} \int_0^t S(t-s) \big[ u(s) F_\phi\big] ds.
\end{aligned}
\end{equation}

We are now ready to discuss the local well-posedness of the equation \eqref{NLS_Ito_critical}. We remark that unlike the additive noise framework (as in Section \ref{S-lwp-add}), where we only used
$u_0\in \dot{H}^1_x$, we require $u_0\in H^1_x$ due to the estimates of the gradient of stochastic integrals. 

\begin{Th}      \label{th_lwp_mul}
Let $n=3,4,5$. Set $p, \rho, \gamma$ as in \eqref{gamma-rho} 
and let $u_0\in H^1_x$ be ${\mathcal F}_0$-measurable such that $\|u_0\|_{{H}^1_x} \leq A$ a.s. Suppose that \eqref{E:Radon} holds, i.e., $\phi \in  R(L^2_x, W^{1,\kappa}_x)$, $\kappa=\frac{n(n+2)}{(n-2)}$. Then there exists a stopping time $\tau(u_0)>0$ a.s. such that the equation \eqref{NLS_Ito_critical} has a unique solution $u(t)$ on the interval $[0, \tau(u_0))$ and a.s. 
$u\in C([0,\tau(u_0)); {H}^1_x) $; furthermore, $\nabla u \in  L^\gamma(\Omega ; L^\gamma(0, \tau(u_0); \dot{W}^{1,\rho}_x))$, and consequently, $u\in  L^\gamma(\Omega; L^\gamma(0,\tau(u_0); L^{p}_x))$.
\end{Th}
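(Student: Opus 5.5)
We will run a fixed-point argument for the Itô form \eqref{NLS_Ito_critical}, modelled on the proof of Theorem \ref{th_lwp-add}. Because the noise is multiplicative, the stochastic convolution cannot be bounded pathwise in advance. We therefore work in a space of adapted processes with moment norms and use a truncation to recover a contraction. Let $\theta_R$ be a smooth cut-off equal to $1$ on $[0,R]$ and $0$ on $[2R,\infty)$. Set
\[
\|u\|_{\mathcal X_T}:=\|u\|_{L^\gamma(\Omega;\mathcal S[T])}+\|\nabla u\|_{L^\gamma(\Omega;\mathcal W[T])}+\|u\|_{L^\gamma(\Omega;L^\infty_{[0,T]}L^2_x)} .
\]
The $L^2$ component is needed because the products $u\,\nabla(\phi e_k)$ do not vanish, which is why the statement requires $u_0\in H^1_x$. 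We consider the truncated equation in which the nonlinearity $|u|^{4/(n-2)}u$ is replaced by $\theta_R\big(\|u\|_{\mathcal S[t]}+\|\nabla u\|_{\mathcal W[t]}\big)|u|^{4/(n-2)}u$. We solve it globally on a deterministic interval, then define $\tau(u_0)$ as the exit time of $\|u\|_{\mathcal S[t]}+\|\nabla u\|_{\mathcal W[t]}$ from $[0,R]$ and let $R\to\infty$. Uniqueness up to $\tau$ will follow because solutions for different $R$ agree up to the smaller exit time.

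\textbf{Step 1: linear and nonlinear terms.} The linear term $S(t)u_0$ is handled by \eqref{upper-ICW}, together with $\|S(\cdot)u_0\|_{L^\infty L^2}\le A$. The nonlinear term $\mathcal T$ is controlled pathwise by \eqref{upper_T(u)-W}--\eqref{upper_T(u)-S}. The truncation makes this term Lipschitz, with constant $C R^{4/(n-2)}$ on the time interval where $\theta_R\neq0$. This alone is not small, so we localize: since $\|S(\cdot)u_0\|_{\mathcal S[t]}\to0$ as $t\to0$, we use the stopping time $\tau_0(\delta)$ of \eqref{upper-ICS} and choose $R$ comparable to $a$ small. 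This mimics the choice of $a,b$ in Step 1 of Theorem \ref{th_lwp-add}, with $b=3C_{Str}A$.

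\textbf{Step 2: Stratonovich correction.} Write $F_\phi:=\sum_k(\phi e_k)^2$; by \eqref{E:Radon} and the embedding $W^{1,\kappa}\subset L^\infty$ (valid since $\kappa>n$), we have $F_\phi\in L^\infty$ and $\nabla F_\phi\in L^{\kappa/2}$. We apply \eqref{Stri_int} with the pair $(\infty,2)$ on the right-hand side, so the term is bounded in $L^1_tL^2_x$. For the gradient, the factor $u\nabla F_\phi$ is placed in $L^2_x$ using Hölder with $u\in L^p$, or using the $L^2$ norm of $u$. In both cases the bound is $C\,T\,\|\phi\|^2_{R(L^2,W^{1,\kappa})}\|u\|_{\mathcal X_T}$, which is small when $T$ is small.

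\textbf{Step 3: stochastic convolution (main obstacle).} The term is $J(t)=\int_0^tS(t-s)[u(s)\,dW(s)]$, and we need $\nabla J\in L^\gamma(\Omega;\mathcal W[T])$. The Itô isometry trick used for \eqref{mom-IW} is no longer available, since $J$ is not Gaussian and $S(t-s)$ depends on $t$. Our first attempt is the Brzeźniak/martingale type-2 approach. For fixed $t$, the map $s\mapsto S(t-s)[u(s)\phi]$ is an adapted $R(L^2,L^\rho)$-valued integrand, and $L^\rho$ is a martingale type-2 space. The Burkholder inequality then gives
\[
\EE\|\nabla J(t)\|_{L^\rho}^\gamma\le C\,\EE\Big(\int_0^t\|S(t-s)\nabla(u\phi\,\cdot)\|_{R(L^2,L^\rho)}^2ds\Big)^{\gamma/2}.
\]
We use the decay estimate \eqref{decay}, $\|S(t-s)\|_{L^{\rho'}\to L^\rho}\lesssim|t-s|^{-n(1/2-1/\rho)}$, where $n(1/2-1/\rho)=2/\gamma<1$. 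The singularity $|t-s|^{-4/\gamma}$ is therefore integrable because $\gamma>4$. The radonifying norm of $\nabla(u\phi)=u\nabla\phi+\phi\nabla u$ in $L^{\rho'}$ is estimated by Hölder. For the first piece, $u\in L^p$ is paired with $\nabla\phi$ in an $L^{\kappa}$-type space. For the second piece, $\nabla u\in L^\rho$ is paired with $\phi\in L^{\kappa'}$ via $W^{1,\kappa}$ embeddings; the exponent $\kappa=n(n+2)/(n-2)$ is exactly what makes these Hölder relations close. Young's inequality in time then yields a bound $C\,T^{\epsilon}\|\phi\|\,\|u\|_{\mathcal X_T}$ for some $\epsilon>0$. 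Continuity in $H^1_x$ follows from the factorization method of Da Prato--Kwapień--Zabczyk. If the $L^{\rho'}\to L^\rho$ route fails for the exponent bookkeeping, we will fall back on a stochastic Strichartz estimate, obtained by bounding the Itô integral via Burkholder in $L^\rho$ and then Minkowski, as in \cite{deB_Deb_H1}.

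\textbf{Step 4: closing the argument.} Combining Steps 1--3, the map $\mathcal G$ maps a ball of $\mathcal X_T$, for small deterministic $T$ and small $R$, into itself and is a contraction there, giving a fixed point. Stopping at the exit time yields the solution on $[0,\tau(u_0))$ with the stated regularity. Here $\tau(u_0)$ is the supremum of all such stopping times, exactly as in the additive case.
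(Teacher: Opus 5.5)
\textbf{Main gap: Step 3 does not close as written.} In the decay route you need $\phi\,\nabla u(s)\in R(L^2_x,L^{\rho'}_x)$, and $\frac1{\rho'}-\frac1\rho=\frac2\kappa$. So putting $\nabla u(s)$ in $L^\rho_x$ requires $\phi\in R(L^2_x,L^{\kappa/2}_x)$, not $L^{\kappa'}$. Putting $u(s)$ in $L^p_x$ requires $\nabla\phi\in R(L^2_x,L^{n(n+2)/(3n-2)}_x)$. Neither follows from \eqref{E:Radon}, because on $\RR^n$ the space $W^{1,\kappa}$ embeds only into $L^q$ with $q\geq\kappa$.

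The exponent $\kappa$ is tuned to $\frac1{\rho'}=\frac12+\frac1\kappa$, i.e.\ $\frac12=\frac1\rho+\frac1\kappa$. In the decay route, $\nabla u(s)$ and $u(s)$ must therefore sit in $L^2_x$. This is why the paper works in ${\mathcal Y}_{[T]}(a,b,c)$, which carries the full norm $\|u\|_{L^\gamma(\Omega;L^\infty_{[0,T]}H^1_x)}$ together with $\|u\|_{L^\gamma(\Omega;{\mathcal W}[T])}$. It also estimates every Duhamel term in $L^\infty_t\dot H^1_x$; for the stochastic convolution this uses the $L^2$-valued BDG inequality and $\|\nabla u\,\phi\|_{L^{0,0}_2}\leq C\|\nabla u\|_{L^\rho_x}\|\phi\|_{R(L^2_x,L^\kappa_x)}$. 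The ${\mathcal W}$- and $L^\infty H^1$-bounds then feed each other with a factor $T^{\frac12-\frac1\gamma}$. Your ${\mathcal X}_T$ contains neither $\nabla u$ in $L^\infty_tL^2_x$ nor $u$ in ${\mathcal W}$.

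Your fallback can genuinely replace the paper's decay argument, but only with the right pairing. BDG in $L^\rho_x$ at fixed $t$, Minkowski in $x$ and in $L^{\gamma/2}_t$, and \eqref{gamma-rho-2} for each fixed $s$ and $k$ give $\|\int_0^\cdot S(\cdot-s)\Psi(s)\,dW(s)\|_{L^\gamma(\Omega;{\mathcal W}[T])}\leq C\|\Psi\|_{L^\gamma(\Omega;L^2_{[0,T]}L^{0,0}_2)}$. With $\frac12=\frac1\rho+\frac1\kappa$, this bounds the gradient of your stochastic convolution by $C\,T^{\frac12-\frac1\gamma}\|\phi\|_{R(L^2_x,W^{1,\kappa}_x)}\big(\|\nabla u\|_{L^\gamma(\Omega;{\mathcal W}[T])}+\|u\|_{L^\gamma(\Omega;{\mathcal W}[T])}\big)$, with no $L^\infty_t\dot H^1_x$ input. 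For this you must add $\|u\|_{L^\gamma(\Omega;{\mathcal W}[T])}$ to the norm, or recover it by interpolating $L^\infty_tL^2_x$ and ${\mathcal S}[T]$.

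Step 2 has the same bookkeeping error. Putting $u\nabla F_\phi$ in $L^2_x$ with $u\in L^p_x$ needs $\nabla F_\phi\in L^{(n+2)/2}_x$, and with $u\in L^2_x$ it needs $\nabla F_\phi\in L^\infty_x$. What \eqref{E:Radon} actually gives is $\|u\nabla F_\phi\|_{L^2_x}\leq\|u\|_{L^\rho_x}\|\nabla F_\phi\|_{L^\kappa_x}$ and $\|F_\phi\nabla u\|_{L^2_x}\leq\|F_\phi\|_{L^\kappa_x}\|\nabla u\|_{L^\rho_x}$.

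\textbf{Secondary issue: the truncation.} The single cut-off $\theta_R\big(\|u\|_{{\mathcal S}[t]}+\|\nabla u\|_{{\mathcal W}[t]}\big)$ does not make the nonlinear term Lipschitz with constant $CR^{4/(n-2)}$ for the whole ${\mathcal X}_T$-norm. By scaling, every $L^2$-level Strichartz bound of $|u|^{\frac4{n-2}}u$ contains one $L^2$-level factor, e.g.\ $\||u|^{\frac4{n-2}}u\|_{L^2_tL^{2n/(n+2)}_x}\leq\|u\|_{{\mathcal S}}^{\frac4{n-2}}\|u\|_{{\mathcal W}}$. So the piece $[\theta_R(X_1)-\theta_R(X_2)]\,|u_1|^{\frac4{n-2}}u_1$ is bounded by $CR^{\frac4{n-2}-1}\big(\|u_1-u_2\|_{{\mathcal S}}+\|\nabla(u_1-u_2)\|_{{\mathcal W}}\big)\|u_1\|_{{\mathcal W}}$. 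This is a product of two random quantities that your cut-off does not control, so the $L^\gamma(\Omega)$ contraction fails. The paper adds a second cut-off $\theta_b(\|v\|_{{\mathcal W}[s]})$ at a non-small level $b$, next to $\theta_a(\|v\|_{{\mathcal S}[s]})$ with $a$ small.

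Finally, the critical Lipschitz constant carries no power of $T$, so $R$ must stay small. Your opening plan (solve for each $R$, then let $R\to\infty$) is therefore unavailable. You only get a solution up to the exit time from $[0,R]$, which is positive a.s.\ because both norms vanish as $t\to0$. Hence $\tau(u_0)$ must be defined as a supremum of such stopping times, as you eventually do in Step 4.
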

\begin{proof}
To handle the nonlinearity, we introduce a truncation function $\theta : [0,\infty) \to [0,1]$, of class $C^\infty$, such that $\theta(x)=1$  for $x\in [0,1]$ and $\theta(x)=0$ for $x\in [2,\infty)$. Then, for any constant $c>0$, set
\begin{equation}\label{theta_c}
\theta_c(x)= \theta\Big( \frac{x}{c}\Big).
\end{equation}
We now consider the truncated equation for some positive constants $a$, $b$, $t>0$, and writing $\sigma = \frac{2}{n-2}$,
\begin{align}\label{eq_trunc}
v(t) =  S(t) u_0 - & i\lambda \int_0^t \theta_a\big(\|v(\cdot)\|_{\SS[s]} \big) \, \theta_b\big( \|  v(\cdot) \|_{\WW{[s]} }\big) S(t-s)\big[   |v(s)|^{2\sigma} v(s) \big] ds \nonumber \\
&\; - i\int_0^t S(t-s)\big[ v(s) dW(s)\big] - \frac{1}{2}\int_0^t  S(t-s)\big[ v(s) F_\phi\big] ds. 
\end{align}
For $c>0$ and $T>0$ (specified later), define the space  
\begin{equation}\label{E:Yspace}
{\mathcal Y}_{[T]} {(a,b,c) }=  \{ u:\Omega \times [0,T] \times \RR^n \to {\mathbb C} :  \|u\|_{L^\gamma(\Omega ; L^\infty_{[0,T]} H^1_x)}  \leq c \} \cap {\mathcal X}_{[T]}(a,b), 
\end{equation}
where for  $a,b>0$ (specified later) we also denote
\begin{equation}\label{E:Xspace} 
\begin{aligned}
{\mathcal X}_{[T]}(a,b) =\Big\{ u:\Omega \times [0,T] \times \RR^n  & \to {\mathbb C} : ~  \|u\|_{L^\gamma (\Omega ; {\mathcal S}{[T]})} \leq a \\ 
&\quad ~ \mbox{\rm and} ~~
\|  u\|_{L^\gamma (\Omega ; {\mathcal W}{[T]})}+ \| \nabla  u\|_{L^\gamma (\Omega ; {\mathcal W}{[T]})} \leq b \big\}.
\end{aligned}
\end{equation} 
We define the mapping $\mathcal G$ on {${\mathcal Y}_{[\tau]}(a,b,c)$ } by the truncated integral equation \eqref{eq_trunc} as
\begin{equation}\label{E:all}
{\mathcal G}(u) = S(t) u_0 + 
T_1(u) +T_2(u)+ T_3(u),
\end{equation}
where
\begin{align*}
T_1(u)(t) = &\;  -i \lambda \int_0^t \theta_a\big(\|u(\cdot)\|_{\SS_{[s]} } \big) \theta_b\big( \|  u(\cdot) \|_{\WW_{[s]}}\big) S(t-s) [|u(s)|^{2\sigma} u(s)] ds, \\
T_2(u)(t)=&\;  -i \int_0^t S(t-s)\big[ u(s) dW(s)\big] , \\
T_3(u)(t) =&\; - \frac{1}{2}\int_0^t  S(t-s)\big[ u(s) F_\phi\big] ds. 
\end{align*}

{\bf Step 1.}  We first show how to choose $a, b, c$ and $\tau(\omega)>0$ a.s. such that ${\mathcal G}$ maps {${\mathcal Y}_{[\tau]}(a,b,c)$ } into itself.  
The estimates \eqref{upper-ICW} and \eqref{upper-ICS} imply that for any $\delta >0$ there exists an ${\mathcal F}_0$-measurable random time $\tau(\delta)$ such that 
\begin{equation}\label{IC-multi}
\| S(\cdot)u_0\|_{L^\gamma(\Omega ; {\mathcal S}{[\tau(\delta)]})} \leq \delta \quad \mbox{\rm and} \quad \| S(\cdot) u_0\|_{L^\gamma (\Omega ; {\mathcal W}{[T]})} 
+ \|  \nabla S(\cdot)u_0\|_{L^\gamma(\Omega ; {\mathcal W}{[T]})} \leq 2 C_{Str} A,
\end{equation}
where the last upper estimate is true for any $T>0$. 
Furthermore, since $S(t)$ is an $L^2_x$ isometry, 
we have $\|S(t) u_0\|_{H^1_x} \leq \|u_0\|_{H^1_x} \leq A$ a.s. for every $t>0$, which implies
\begin{equation}\label{IC-H1}
\| S(\cdot) u_0\|_{L^\gamma(\Omega ; L^\infty_{[0,T]} H^1_x)} \leq A, \quad \mbox{for ~any} ~~ T>0.
\end{equation}

(1) We bound the nonlinear term $T_1(u)$. Recall that \eqref{Sob} implies $\|T_1(u)\|_{\SS[T]} \leq C_{Sob} \| \nabla T_1(u)\|_{{\mathcal W}{[T]}}$. 
Furthermore, \eqref{Stri_int} implies that for any $L^2$-admissible pair $(q,r)$, we have
\[ 
\big\|\nabla T_1(u) \big\|_{L^\gamma_{[0,T]} L^\rho_x} \leq C_{Str} (2\sigma +1) \big\| \theta_a\big( \|u\|_{\SS[T]}\big) \theta_b\big( \| \nabla u\|_{\WW[T]}\big) 
|u(s)|^{2\sigma} \nabla u(s) \big\|_{L^{q'}_{[0,T]} L^{r'}_x}.
\]
Take $q=2$ and $r=\frac{2n}{n-2}$, so that $(q,r)$ is an $L^2$-admissible pair such that $r'=\frac{2n}{n+2}$, which implies $\frac{1}{r'} = \frac{2\sigma}{p} + \frac{1}{\rho}$
 (with $p$ and $\rho$ as in \eqref{gamma-rho}). 
Then H\"older's inequality implies that $\| |u|^{2\sigma} \nabla u\|_{L^{r'}_x} \leq \|u\|_{L^p_x}^{2\sigma}  \|\nabla u\|_{L^\rho_x}$. 

Note that $\frac{2\sigma +1}{\gamma} = \frac{1}{2}$. 
Since $\| \theta\|_{L^\infty} \leq 1$,  H\"older's inequality applied to the measure $dP\times dt$ on the set $\Omega \times [0,T]$  implies that for every $T>0$ 
\begin{equation}
 \| \nabla T_1(u) \|_{L^\gamma(\Omega ; {\mathcal W}{[T]} )}  \leq C_{Str}  \, \frac{n+2}{n-2} \,  \|u\|_{L^\gamma(\Omega; {\mathcal S}{[T]})}^{\frac{4}{n-2} } \; 
\|\nabla u\|_{L^\gamma(\Omega; {\mathcal W}{[T]})}, 		
\label{upp_nablaT1-W}
\end{equation}
and a similar computation implies
\begin{equation}\label{upper_T1-W}
\|  T_1(u) \|_{L^\gamma(\Omega ; {\mathcal W}{[T]} )}  \leq C_{Str}   \,  \|u\|_{L^\gamma(\Omega; {\mathcal S}{[T]})}^{\frac{4}{n-2} } \; 
\|  u\|_{L^\gamma(\Omega; {\mathcal W}{[T]})}.
\end{equation} 
Since $(\infty,2)$ is an $L^2$-admissible pair, for the same values of $r,q$ we also have
\[  
\|\nabla T_1(u)\|_{L^\infty_{[0,T]} L^2_x} \leq C_{Str} (2\sigma +1) \big\| \theta_a\big( \|u\|_{\SS[T] }\big) \theta_b\big( \nabla u\|_{\WW [T] }\big) 
|u(s)|^{2\sigma} \nabla u(s) \|_{L^{q'}_{[0,T]} L^{r'}_x}, 
\]
which yields  
$$ \| \nabla T_1(u) \|_{L^\gamma(\Omega ; L^\infty_{[0,T]} L^2_x)} \leq  C_{Str} \, \frac{n+2}{n-2} \,  \|u\|_{L^\gamma(\Omega; {\mathcal S}{[T]})}^{\frac{4}{n-2} } \, 
\|\nabla u\|_{L^\gamma(\Omega; {\mathcal W}{[T]})}.
$$

In the same manner, we also have 
$$  \| T_1(u) \|_{L^\gamma(\Omega ; L^\infty_{[0,T]} L^2_x)} \leq     C_{Str} \, 
\|u\|_{L^\gamma(\Omega; {\mathcal S}{[T]})}^{\frac{4}{n-2} } \| u\|_{L^\gamma(\Omega ; {\mathcal W}{[T]})}.
$$
Therefore,
\begin{equation}		\label{upper_T1_H1} 
\| T_1(u)\|_{L^\gamma(\Omega ; L^\infty_{[0,T]} H^1_x)} \leq    C_{Str} \frac{2n}{n-2} \|u\|_{L^\gamma(\Omega; {\mathcal S}{[T]} )}^{\frac{4}{n-2} }  
\big[ \|\nabla u\|_{L^\gamma(\Omega; {\mathcal W}{[T]})} + \|u\|_{L^\gamma(\Omega; {\mathcal W}{[T]})} \big].
\end{equation} 
\smallskip

(2) Our next task is to bound the stochastic integral $T_2(u)$. Similarly to the above, by \eqref{Sob},  we have
 $\| T_2(u)\|_{L^\gamma(\Omega ; \SS{[T]}) } \leq C_{Sob} \|\nabla T_2(u)\|_{L^\gamma(\Omega ; {\mathcal W}{[T]})}$.
We split $i \nabla T_2(u)= T_{2,1}(\nabla u) + T_{2,2}(u)$, where
\[ T_{2,1}(\nabla u) = \int_0^t S(t-s) \nabla u(s) dW(s) \quad \mbox{\rm and} \quad T_{2,2}(u) = \int_0^t S(t-s) u(s) d\nabla W(s).\]

Given a separable Hilbert space $H$, two Banach spaces  $E,F$,  and two  operators $K\in R(H;E)$ and
$L\in {\mathcal L}(E;F)$, recalling the proof of \cite[Lemma 2.1]{deB_Deb_CMP}, we obtain that the operator $LK \in R(H;F)$ and
\begin{equation} 	\label{Radon_composition}
\| LK\|_{R(H,F)} \leq \|L\|_{{\mathcal L}(E,F)} \cdot  \|K\|_{R(H;E)}. 
\end{equation} 

Given $t, t_0\in [0,T]$, we set
\[  {\mathcal I} \nabla u(t_0,t) = \int_0^{t_0} S(t-s) \nabla u(s) dW(s) = \sum_{k\geq 0} \int_0^{t_0} S(t-s) \big(\nabla u(s) \phi e_k\big)  d\beta_k(s).\]

Let $B$ be a martingale type-2 Banach space, and  
$X$ be a $R(L^2(\RR^n ;\RR); B)$-valued progressively measurable process. Recall 
that for any $p\in [2,\infty)$, we have
\begin{equation}    \label{BDG}
\EX\Big( \sup_{t\in [0,T]} \Big| \int_0^t X(s) dW(s) \Big|_B^p\Big) \leq C_p(B) \, \EX\Big( \Big| \int_0^T \|X(s)\|_{R(L^2(\RR^n;\RR); B)}^2 ds \Big|^{\frac{p}{2}}
\Big). 
\end{equation}
Then the Burkholder-Davis-Gundy inequality (e.g., see \cite{Brz_1997}) implies that
\[ \EX \Big( \sup_{t_0 \in [0,T]} \|  {\mathcal I}\nabla u(t_0,t)\|_{L^\rho_x}^\gamma \Big) \leq 
C_{\gamma}(\rho) \EX \Big( \Big| \int_0^T \|S(t-s) \big( \nabla u(s) \phi \big) \|^2_{R(L^2_x; L^\rho_x)} ds\Big|^{\frac{\gamma}{2}} \Big).
\] 
For $H=L^2_x$, $E= L^{n\gamma/2}_x$ and $F=L^\gamma_x$, if $L_{\nabla u}$ is the map from $E$ into $F$ defined by $L_{\nabla u}(v)=S(t-s) ( \nabla u(s) v)$, the upper bound
\eqref{Radon_composition} implies that
\[ \|S(t-s) \big( \nabla u(s) \phi) \|_{R(L^2_x; L^\rho_x)} \leq \|L_{\nabla u}\|_{{\mathcal L}(L_x^{n\gamma/2} ; L^\rho_x)} \|\phi\|_{R(L^2_x; L^{n\gamma/2}_x)}.\]
Let $v\in L^{n\gamma/2}(\mathbb R^n)$. Using the time decay of the Schr\"odinger group \eqref{decay}, we deduce
\[ \|L_{\nabla u} v\|_{L^\rho_x} = \| S(t-s) (\nabla u(s)  v)\|_{L^\rho_x} \leq C(\rho)  |t-s|^{-n ( \frac{1}{2}-\frac{1}{\rho})} \| \nabla u(s) v \|_{L^{\rho'}_x}.\]
Since $(\gamma, \rho)$ is an $L^2$-admissible pair, $n( \frac{1}{2}-\frac{1}{\rho}\big) = \frac{2}{\gamma}$, and since $\frac{1}{\rho'} = 1-\frac{1}{\rho} = \frac{1}{2}+ \frac{2}{n\gamma}$, we deduce
\[ \|L_{\nabla u} v\|_{L^\rho_x} \leq C(\rho) |t-s|^{-\frac{2}{\gamma}} \|\nabla u(s)\|_{L^2_x} \|v\|_{L^{n\gamma/2}_x}, \]
so that 
\[ \EX \Big( \sup_{t_0\in [0,T]} \Big| \int_0^{t_0} \!\! S(t-s) \big( \nabla u(s) dW(s)\big)\Big|^\gamma\Big) \leq 
C(\gamma,\rho)\EX \Big( \Big| \int_0^T \!\! (t-s)^{-\frac{4}{\gamma}} \| \nabla u(s)\|_{L^2_x}^2 ds \Big|^{\frac{\gamma}{2}} \Big)
\|\phi\|_{R(L^2_x; L^{n\gamma/2}_x)}^\gamma.\]
Since $\gamma = \frac{2(n+2)}{n-2}$, it implies that $\gamma >4$ for $n<6$. Therefore, 
\[ 
\EX \Big( \sup_{t_0\in [0,T]}  \Big| \int_0^{t_0} \!\! S(t-s) \big( \nabla u(s) dW(s) \big) \Big|^\gamma\Big) \leq 
C(\gamma, \rho) T^{\,\gamma (\frac{1}{2} - \frac{2}{\gamma})} \|\phi\|_{R(L^2_x; L^{n\gamma/2}_x)}^\gamma \EX \big( \|\nabla u\|_{L^\infty_{[0,T]} L^2_x}^\gamma \big).
\] 
Thus, if $J\nabla u(t):={\mathcal I}\nabla u(t,t)$, we deduce that for every $t\in [0,T]$ 
\[ \EX \big( \|J\nabla u(t)\|_{L^\rho_x}^\gamma\big)  \leq C(\gamma,\rho)\,  t^{\gamma (\frac{1}{2} - \frac{2}{\gamma})} \|\phi\|_{R(L^2_x; L^{n\gamma/2}_x)}^\gamma \EX\big( \|\nabla u\|_{L^\infty_{[0,T]} L^2_x}^\gamma \big).
\]
Using that $\frac{n\gamma}{2} = \frac{2\rho}{\rho-2}$,  we conclude 
\begin{align*} 
\|T_{2,1}(\nabla u)\|_{L^\gamma(\Omega ; {\mathcal W}{[T]})}^\gamma 
=  \EE\Big( \int_0^T  \|Ju(t)\|_{L^\rho_x}^\gamma dt\Big) 
\leq C(\gamma,\rho)  T^{1+\gamma ( \frac{1}{2} - \frac{2}{\gamma})} \|\phi\|_{R(L^2_x;L^{2\rho/(\rho-2)}_x)}^\gamma \EE\big(  \|\nabla u\|_{L^\infty_{[0,T]} L^2_x}^\gamma \big).
\end{align*}
Since $T_2(u)=-i T_{2,1}(u)$, a similar computation implies  
\begin{equation}		\label{upper_T21_1bis}
\|T_{2}( u)\|_{L^\gamma(\Omega ; {\mathcal W}{[T]})} 
\leq C(\gamma,\rho)  T^{\frac{1}{2}-\frac{1}{\gamma}} \|\phi\|_{R(L^2_x;L^{2\rho/(\rho-2)}_x)}  \| u\|_{L^\gamma(\Omega ; L^\infty_{[0,T]} L^2_x )}.
\end{equation} 
Using \cite[Theorem 6.10]{DaP-Zab}, we deduce
\[ \EX  \Big( \sup_{t\in [0,T]} \Big\| \int_0^t S(t-s) (\nabla u(s) dW(s))\Big\|_{L^2_x}^\gamma\Big) \leq C(\gamma) \EX  \Big(\Big| \int_0^T \|\nabla u(s) \phi \|_{L^{0,0}_2}^2 ds 
\Big|^{\frac{\gamma}{2}} \Big).\]
Since $\frac{1}{2}=\frac{1}{\rho} + \frac{2}{n\gamma}$, using again \eqref{Radon_composition} and H\"older's inequality for the time integral,  we deduce
\begin{align*}
\EX \Big( \sup_{t\in [0,T]} \|J u(t)\|_{L^2_x}^\gamma\Big) \leq& \;  C(\gamma)  \|\phi\|_{R(L^2_x; L^{n\gamma/2}_x)}^\gamma \EX \Big( \Big| \int_0^T \| \nabla u(s)\|_{L^\rho_x}^2 ds \Big|^{\frac{\gamma}{2}} \Big)\\
\leq& \; C(\gamma)
T^{\frac{\gamma}{2}-1}  \|\phi\|_{R(L^2_x; L^{n\gamma/2}_x)}^\gamma \EX \Big( \int_0^T \|\nabla u(s)\|_{L^\rho_x}^\gamma ds\Big),
\end{align*} 
which implies 
\begin{align}		
\|T_{2,1}(\nabla u)\|_{L^\gamma(\Omega ; L^\infty_{[0,T]} L^2_x)} 
\leq &\; C(\gamma)  T^{\frac{1}{2}-\frac{1}{\gamma} } \|\phi\|_{R(L^2_x;L^{2\rho/(\rho-2)}_x)}   \|\nabla u\|_{L^\gamma(\Omega; {\mathcal W}{[T]} )} \nonumber \\
\|T_{2}( u)\|_{L^\gamma(\Omega ; L^\infty_{[0,T]} L^2_x)} 
\leq &\; C(\gamma)  T^{\frac{1}{2}-\frac{1}{\gamma}}  \|\phi\|_{R(L^2_x;L^{2\rho/(\rho-2)}_x)}   \| u\|_{L^\gamma(\Omega; {\mathcal W}{[T]} )}.		\label{upper_T21_2bis}
\end{align} 
Since $T_{2,2}(u)$ is deduced from $T_{2,1} (\nabla u)$ by replacing $\nabla u$ with $u$ and $\phi$ with $\nabla \phi$,   we have 
\begin{align*}		
\|T_{2,2}( u)\|_{L^\gamma(\Omega ; {\mathcal W}{[T]})} \leq & \; C(\gamma,\rho)\,   T^{\frac{1}{2}-\frac{1}{\gamma}} \|\phi\|_{R(L^2_x;\dot{W}^{1, 2\rho/(\rho-2)}_x)} 
 \| u\|_{L^\gamma(\Omega ; L^\infty_{[0,T]} L^2_x)},
\\
\|T_{2,2}( u)\|_{L^\gamma(\Omega ; L^\infty_{[0,T]} L^2_x)} 
\leq &\; C(\gamma) \,   T^{\frac{1}{2}-\frac{1}{\gamma}} \|\phi\|_{R(L^2_x;\dot{W}^{1,2\rho/(\rho-2)}_x)} \| u\|_{L^\gamma(\Omega; {\mathcal W}{[T]} )}. 
\end{align*}
Collecting the estimates of $\| T_{2,1}(\nabla u)\|_{L^\gamma(\Omega , {\mathcal W}{[T]})}$ and  $\| T_{2,2}( u)\|_{L^\gamma(\Omega , {\mathcal W}{[T]})}$, we obtain
\begin{equation}		\label{upp_nablaT2}
\| \nabla T_2(u)\|_{L^\gamma(\Omega ; {\mathcal W}{[T]})} \leq \; C(\gamma,\rho) \|\phi\|_{R(L^2_x; W^{1, 2\rho/(\rho-2)}_x)} T^{\frac{1}{2}-\frac{1}{\gamma}} 
\|u\|_{L^\gamma(\Omega; L^\infty_{[0,T]} H^1_x)}.
\end{equation}
Similar estimates for $\| T_{2,1}(\nabla u)\|_{L^\gamma(\Omega ; L^\infty_{[0,T]} L^2_x)}$ and $\| T_{2,2}( u)\|_{L^\gamma(\Omega ; L^\infty_{[0,T]} L^2_x)}$ yield
\begin{equation}		\label{upp_nablaT2_2}
\| \nabla T_2(u)\|_{L^\gamma(\Omega ; L^\infty_{[0,T]} L^2_x)} \leq \; C(\gamma) \|\phi\|_{R(L^2_x; W^{1, 2\rho/(\rho-2)}_x)} T^{\frac{1}{2}-\frac{1}{\gamma}} 
\big[ \|u\|_{L^\gamma(\Omega; {\mathcal W}{[T]})} +  \|\nabla u\|_{L^\gamma(\Omega; {\mathcal W}{[T]})} \big].
\end{equation} 
\smallskip

(3) We finally bound the term $T_3(u)$. The 
embedding \eqref{Sob} implies that $\|T_3(u)\|_{L^p_x}$ $ \leq C_{Sob} \| \nabla T_3(u)\|_{L^\rho_x}$, so we proceed with bounding the last quantity.
The inequality \eqref{Stri_int} implies 
\[ \| \nabla T_3(t)\|_{{\mathcal W}{[T]}} \leq C_{Str} \| \nabla\big( u(s) F_\phi\big) \|_{L^{\gamma'}_{[0,T]} L^{\rho'}_x} \leq C_{Str} \big[ T_{3,1}(\nabla u) + T_{3,2}(u)\big],\]
where
\[ T_{3,1}(\nabla u) = \| \nabla u  F_\phi\|_{L^{\gamma'}_{[0,T]} L^{\rho'}_x} \quad \mbox{\rm and} \quad T_{3,2}(u) = \| u \nabla F_\phi \|_{L^{\gamma'}_{[0,T]} L^{\rho'}_x}.\]
Since $\frac{1}{\rho'} = \frac{1}{\rho} + \frac{\rho-2}{\rho}$ and $\frac{1}{\gamma'} = \frac{1}{\gamma} + \frac{\gamma-2}{\gamma}$, H\"older's inequality implies
\[ T_{3,1}(\nabla u) \leq \| \nabla u\|_{{\mathcal W}{[T]}} \|F_\phi\|_{L^{\rho/(\rho-2)}_x} T^{\frac{\gamma-2}{\gamma}} \quad \mbox{\rm and} \quad 
T_{3,2}(u) \leq \|u\|_{{\mathcal W}{[T]}} \|\nabla F_\phi\|_{L^{\rho/(\rho-2)}_x} T^{\frac{\gamma-2}{\gamma}}.
\]
Computations made in \cite[p.~173]{deB_Deb_CMP} imply the existence of a constant $\kappa>0$ such that  $\|F_\phi\|_{L^{\rho/(\rho-2)}_x} \leq \kappa \|\phi\|_{R(L^2_x ; L^{2\rho/(\rho-2)}_x)}^2$, 
and similarly, $\| \nabla F_\phi\|_{L^{\rho/(\rho-2)}} \leq \kappa \|\phi\|_{R(L^2_x; \dot{W}^{1, 2\rho/(\rho-2)}_x)}^2$. 
Finally, since $T_3(u)$ is similar to $T_{3,1}(\nabla u)$ (replacing $\nabla u$ by $u$),  we deduce that
\begin{align}		\label{upp_nablaT3_W}
 \|\nabla T_3(u)\|_{L^\gamma(\Omega ; {\mathcal W}_{[T]})}
 \leq&\,  \kappa C_{Str} T^{1-\frac{2}{\gamma}} \|\phi\|_{R(L^2_x; W^{1, 2\rho/(\rho-2)}_x)}^2 
\big[ \|u\|_{L^\gamma( \Omega ; {\mathcal W}{[T]})}  + \| \nabla u\|_{L^\gamma( \Omega ; {\mathcal W}{[T]} )}\big], \\
\|  T_3(u)\|_{L^\gamma(\Omega ; {\mathcal W}{[T]})}
 \leq &\, \kappa C_{Str} T^{1-\frac{2}{\gamma}} \|\phi\|_{R(L^2_x; L^{2\rho/(\rho-2)}_x)}^2 
 \|u\|_{L^\gamma( \Omega ; {\mathcal W}{[T]})} . 	\label{upp_T3_W}
\end{align} 
Since $(\infty,2)$ is an $L^2$-admissible pair, similar bounds are valid for $\| \nabla T_3(u)\|_{L^\infty_{[0,T]} L^2_x}$ and $\| T_3(u)\|_{L^\gamma (\Omega ;  L^\infty_{[0,T]} L^2_x)}$,
which imply
\begin{equation}		\label{upp_T3_H1}
\|T_3(u)\|_{L^\gamma(\Omega ; L^\infty_{[0,T]} H^1_x)} \leq \kappa C_{Str} \|\phi\|_{R(L^2_x ; W^{1, 2\rho/(\rho-2)})}^2 T^{1-\frac{2}{\gamma}} 
\big[2 \| u\|_{L^\gamma(\Omega ; {\mathcal W}{[T]})} +  \| \nabla u\|_{L^\gamma(\Omega ; {\mathcal W}{[T]})} \big].
\end{equation} 
\smallskip

Now that we obtained bounds for all terms in \eqref{E:all}, we show how to choose $a, b, c, \tau$ so that ${\mathcal G}$ maps ${\mathcal Y}_{a,b,c}(\tau)$ into itself. 

Let $u\in {\mathcal Y}_{a,b,c}(\tau)$ and $\tau \leq \tau(\delta)\wedge T$. Recall the definition of the parameters $a,b,c$ for norm bounds in the definition of spaces in \eqref{E:Yspace} and \eqref{E:Xspace}. Then
the inequalities \eqref{IC-H1}, \eqref{upper_T1_H1},  \eqref{upper_T21_2bis}, \eqref{upp_nablaT2_2} and \eqref{upp_T3_H1} imply
\begin{align*}
 \| {\mathcal G}(u)\|_{L^\gamma(\Omega ; L^\infty_{[0,\tau]} H^1_x)} \leq &\;  A + 2 C_{Str}\,  \frac{2n}{n-2}\,  a^{\frac{4}{n-2}}\,  b \\
&{+ 2\, b\,  T^{\frac{1}{2} -\frac{2}{\gamma}} \, 
\big[ C(\gamma) T^{\frac{1}{\gamma}} 
\| \phi\|_{R(L^2_x ; W^{1, 2\rho/(\rho-2)}_x)} 
+ \kappa C_{Str} T^{\frac{1}{2}} 
\|\phi\|_{R(L^2_x ; W^{1, 2\rho/(\rho-2)}_x)}^2\big]. }
\end{align*}
The inequalities in \eqref{IC-multi}, \eqref{upp_nablaT1-W}, \eqref{upper_T1-W}, \eqref{upper_T21_1bis}, \eqref{upp_nablaT2}, \eqref{upp_nablaT3_W} and \eqref{upp_T3_W}
imply
\begin{align*}
\| {\mathcal G}(u)\|_{L^\gamma(\Omega ; {\mathcal W}{[T]})} & + \| \nabla {\mathcal G}(u)\|_{L\gamma(\Omega ; {\mathcal W}{[T]})} \leq 
2  C_{Str} A + C_{Str} \frac{2n}{n-2} a^{\frac{4}{n-2}}  b \\
&{+ 2      T^{\frac{1}{2}-\frac{2}{\gamma}}  \big[ C(\gamma,\rho)\,     T^{\frac{1}{\gamma}}  \| \phi\|_{R(L^2_x ; W^{1, 2\rho/(\rho-2)}_x)} \; {c }
+ \kappa C_{Str}  \,  T^{\frac{1}{2}} \| \phi\|_{R(L^2_x ; W^{1, 2\rho/(\rho-2)}_x)}^2  \; b \big] . }
\end{align*}
Furthermore, by \eqref{Sob} we have $\| T_j(u)\|_{L^\gamma(\Omega ; {\mathcal S}{[T]})} \leq C_{Sob} \| \nabla T_j(u)\|_{L^\gamma(\Omega ; {\mathcal W}{[T]})}$, and thus, the upper estimates
\eqref{IC-multi}, \eqref{upp_nablaT1-W}, \eqref{upp_nablaT2} and \eqref{upp_nablaT3_W} imply
\begin{align*}
\| {\mathcal G}(u)\|_{L^\gamma(\Omega; {\mathcal S}{[\tau]})} \leq& \;  \delta + C_{Sob} C_{Str} \frac{n+2}{n-2} \, a^{\frac{4}{n-2}} \, b \\
&{+ C_{Sob} T^{\frac{1}{2}-\frac{2}{\gamma}}
\big[ C(\gamma, \rho)  , T^{\frac{1}{\gamma}} \|\phi\|_{R(L^2_x; W^{1, 2\rho/(\rho-2)}_x)}\; c   + \kappa C_{Str} T^{\frac{1}{2}} \|\phi\|_{R(L^2_x; W^{1, 2\rho/(\rho-2)}_x)}^2\;  b\big]. }
\end{align*} 

Set $c=3A$ and $b=6 C_{Str} A$. Choose  $a>0$ such that 
\[ 2C_{Str} \, \frac{2n}{n-2} a^{\frac{4}{n-2}}  \, b\leq A, \quad \frac{n}{n-2} a^{\frac{4}{n-2}} b \leq A \quad \mbox{\rm and} \quad C_{Sob} C_{Str} a^{\frac{6-n}{n-2}} \, b \leq \frac{1}{3}.
\] 

Let $\delta = \frac{a}{3}$ and choose $T>0$ such that 
{
\begin{align*}
 2b\, T^{\frac{1}{2}-\frac{2}{\gamma}} \big[ C(\gamma) T^{\frac{1}{\gamma}}   \|\phi\|_{R(L^2_x; W^{1, 2\rho/(\rho-2)}_x)} 
 + \kappa C_{Str} T^{\frac{1}{2}}   \|\phi\|_{R(L^2_x; W^{1, 2\rho/(\rho-2)}_x)}^2 \big] \leq &\; A,\\
 2\, T^{\frac{1}{2}-\frac{2}{\gamma}}  \big[ C(\gamma,\rho) T^{\frac{1}{\gamma}}  \|\phi\|_{R(L^2_x; W^{1, 2\rho/(\rho-2)}_x)} \; c 
  + \kappa C_{Str} T^{\frac{1}{2}} \|\phi\|_{R(L^2_x; W^{1, 2\rho/(\rho-2)}_x)}^2 \; b \big] \leq& \;  {2} \,C_{Str} A, \\
 C_{Sob} T^{\frac{1}{2}-\frac{2}{\gamma}}  \big[ C(\gamma,\rho)  T^{\frac{1}{\gamma}} \|\phi\|_{R(L^2_x; W^{1, 2\rho/(\rho-2)}_x)} \; c
  + \kappa C_{Str} T^{\frac{1}{2}} \|\phi\|_{R(L^2_x; W^{1, 2\rho/(\rho-2)}_x)}^2\; b  \big] \leq & \; \frac{a}{3}.
 \end{align*} }
Then for $\tau = \tau(\delta) \wedge T$, ${\mathcal G}$ maps ${\mathcal Y}_{a,b,c}(\tau)$ into itself. Note that for the same values of $b,c$, if $\tilde{a}\leq a$, and $\tilde{\delta}$, $\tilde{T}>0$ are chosen to satisfy the last three bounds with $\tilde{\delta}$ and $\tilde{a}$ instead of $\delta$ and $a$, respectively, then for $\tilde{\tau} = \tau(\tilde{\delta}) \wedge \tilde{T}$,
${\mathcal G}$ maps ${\mathcal Y}_{\tilde{a},b,c}(\tilde{\tau})$ into itself.
\smallskip

{\bf Step 2.}  We  give constraints on $a,b,c$ and $\tau$ so that ${\mathcal G}$ is a contraction on ${\mathcal Y}_{a,b,c}(\tau)$ into itself with the norm
\[ \|u\|_{{\mathcal Y}_{a,b,c}(\tau)} =  \|u\|_{L^\gamma(\Omega; L^\infty_{[0,\tau]} H^1_x)}+ \|u\|_{L^\gamma(\Omega ; {\mathcal S}{[\tau]})} + 
\|\nabla u\|_{L^\gamma(\Omega ; {\mathcal W}{[\tau]})} . 
\]

Let $u_1$ and $u_2$ belong to ${\mathcal Y}_{a,b,c}(\tau)$. Then ${\mathcal G}(u_1) - {\mathcal G}(u_2) = \sum_{j=1}^3 [T_j(u_1) - T_j(u_2)\big]$.
The Sobolev embedding \eqref{Sob} implies that for each $j=1, ...,3$, we have
\begin{equation}		\label{SS-WW}
\|T_j(u_1)- T_j(u_2)\|_{\SS{[\tau]}} \leq C_{Sob}\|\nabla \big[ T_j(u_1)- T_j(u_2)\big]\|_{{\mathcal W}{[\tau]}}. 
 \end{equation}

We first deal with the nonlinear term $T_1$. For $j=1,2$, define the stopping times $\tau_1$ and  $\tau_2$  related to the truncation functions $\theta_a$ and $\theta_b$, respectively, that is,  $\tau_j=\tau^{\SS,a}_j \wedge \tau^{\WW,b}_j$, $j=1,2$, where 
 \[ \tau^{\SS,a}_j = \inf\big\{ t\geq 0 : \|u_j\|_{\SS{[t]} }\geq 2a\big\} \wedge \tau \quad \mbox{\rm and} \quad  \tau^{\WW,b}_j = \inf\big\{ t\geq 0 : \|u_j\|_{\WW{[t]} }\vee \|\nabla u\|_{\WW{[t]}} \geq 2b\big \} 
 \wedge \tau.  \]
Without loss of generality, we may assume that $\tau_1 \leq \tau_2$. Note that the terms $T_1(u_1)$ and $T_1(u_2)$ involve time integration on the time interval $[0, \tau_2 ] = [0, \tau_1] \cup
(\tau_1, \tau_2]$. 

This yields  $ T_1(u_1) -   T_1(u_2)=\sum_{j=1}^5 T_{1,j}(u_1,u_2)$, where (recall that $2\sigma = \frac{4}{n-2}$), 
\begin{align*}
T_{1,1}(u_1,u_2)  = &\; \int_0^{\tau_1} S(t-s) {\mathcal T}_1(s) ds, \\
\mbox{\rm for} \;  {\mathcal T}_1(s) = &\; \big[ \theta_a\big(\|u_1\|_{\SS{[s]}} \big) - \theta_a\big(\|u_2\|_{\SS{[s]}}\big) \big] \theta_b(\big\|u_1\|_{\WW{[s]} }\big) 
 |u_1(s)|^{2\sigma} u_1(s) , \\
 T_{1,2}(u_1,u_2)=&\; \int_0^{\tau_1} S(t-s) {\mathcal T}_2(s) ds, \\
 \mbox{\rm for} \;  {\mathcal T}_2(s)=&\,   \theta_a\big(\|u_2\|_{\SS{[s]}} \big) \big[  \theta_b\big(\|u_1\|_{\WW{[s]}} \big) - \theta_b\big(\|u_2\|_{\WW{[s]}}\big) \big]
 |u_1(s)|^{2\sigma} u_1(s) , \\
T_{1,3}(u_1,u_2) = &\; \int_0^{\tau_1} S(t-s) {\mathcal T}_{3}(s) ds, \\
\mbox{\rm for} \; {\mathcal T}_{3}(s) =&\;   \theta_a\big(\|u_2\|_{\SS{[s]}}\big)   \theta_b\big(\|u_2\|_{\WW{[s]}} \big)
\big[ |u_1(s)|^{2\sigma} u_1(s) - |u_2(s)|^{2\sigma} u_2(s)\big] ds,\\
T_{1,4}(u_1,u_2) =&\; \int_{\tau_1}^{\tau_2} 1_{\big\{ \tau_1^{\SS,a}  \leq \tau_1^{\WW,b}\big\}}  S(t-s) {\mathcal T}_{4}(s)  ds, \\
\mbox{for} \; {\mathcal T}_{4} (s)=&\;  \theta_a\big(\|u_2\|_{\SS{[s]} }\big) \theta_b  \big( \|u_2\|_{\WW{[s]}  }\big)|u_2(s)|^{2\sigma} u_2(s), \\
T_{1,5}(u_1,u_2) =& \; \int_{\tau_1}^{\tau_2} 1_{\big\{ \tau_1^{\SS,a} > \tau_1^{\WW,b} \big\}}  S(t-s) {\mathcal T}_{4}(s)  ds.
\end{align*}
The Strichartz estimate \eqref{Stri_int} implies that for $r=\frac{2n}{n-2}$ (such that $(2,r)$ is an $L^2$-admissible pair), we have  
\begin{align*}	
\| \nabla T_{1,1}(u_1,u_2) \|_{L^\gamma_{[0,\tau]} L^{\rho}_x}  \leq & C_{Str} \| \nabla {\mathcal T}_{1}\|_{L^2_{[0, \tau_1]} L^{r'}_x} \\
\leq &  C_{Str} \frac{\|\theta'\|_{L^\infty}}{a} \|u_1-u_2\|_{\SS{[\tau_1]} }  \| \nabla \big( |u_1|^{2\sigma} u_1\big) \|_{L^2_{[0, \tau_1]} L^{r'}_x}\\
\leq & C_{Str}   \frac{\|\theta'\|_{L^\infty}}{a} (2\sigma +1) \|u_1-u_2\|_{\SS{[\tau_1]}} \| |u_1 |^{2\sigma }\nabla u_1 \|_{L^2_{[0, \tau_1]} L^{r'}_x}\\
\leq &  C_{Str}   \frac{\|\theta'\|_{L^\infty}}{a} (2\sigma +1) \|u_1-u_2\|_{\SS{[\tau_1]} }  \| u_1\|_{L^\gamma_{[0,\tau_1]} L^p_x}^{2\sigma} \| \nabla u_1\|_{L^\gamma_{[0, \tau_1]} L^\rho_x},
\end{align*}
where the last estimate is a consequence of H\"older's inequality and of the identities $\frac{1}{r'} = \frac{2\sigma}{p} + \frac{1}{\rho}$ and $\frac{1}{2} = \frac{2\sigma +1}{\gamma}$. 
By definition, we deduce that
\[ \| \nabla T_{1,1}(u_1,u_2)\|_{{\mathcal W}_{[\tau]}} \leq  C_{Str} \,  \|\theta'\|_{L^\infty}\,  \frac{n+2}{n-2} \,  2^{\frac{n+2}{n-2}}\,  a^{\frac{6-n}{n-2}} \, b \, \|u_1-u_2\|_{\SS_{[\tau_1]}} .
\] 
Since $(\infty,2)$ is an $L^2$-admissible pair, the same upper bound is valid for $\| \nabla T_{1,1}(u_1,u_2) \|_{L^\infty_{[0,\tau]} L^2_x}$. 

A similar computation shows that
\begin{align*}		
 \|  T_{1,1}(u_1,u_2)\|_{L^\gamma_{[0,\tau]} L^{\rho}_x} \leq  &\;
 C_{Str}   \frac{\|\theta'\|_{L^\infty}}{a}  \|u_1-u_2\|_{\SS{[\tau_1]}}  \| u_1\|_{L^\gamma_{[0,\tau_1]} L^p_x}^{2\sigma} \| u\|_{L^\gamma_{[0, \tau_1]} L^\rho_x} \\ 
 \leq &\; C_{Str}  \|\theta'\|_{L^\infty}  2^{\frac{n+2}{n-2} } a^{\frac{6-n}{n-2} } b \|u_1-u_2\|_{\SS{[\tau_1]}}, 
\end{align*}
and again that the same upper bound holds for $\|  T_{1,1}(u_1,u_2)\|_{L^\infty_{[0,\tau]} L^{2}_x}$. 
Therefore, we deduce
\begin{equation}		\label{upp_T11} 
\| T_{1,1}(u_1,u_2)\|_{{\mathcal Y}_{a,b,c}(\tau)} \leq C_{Str} \| \theta'\|_{\infty}  2^{\frac{n+2}{n-2}} 
  a^{\frac{6-n}{n-2} }\,  b  \Big[ \frac{n+2}{n-2} \big( 2 +  C_{Sob}\big) + 2 \Big]
 \|u_1-u_2\|_{L^\gamma(\Omega; \SS{[\tau_1]})}. 
\end{equation}
The upper bounds for $T_{1,2}(u_1,u_2)$ are obtained in a similar manner. Indeed,
\begin{align*}	
 \| \nabla T_{1,2}(u_1,u_2)\|_{L^\gamma_{[0,\tau]} L^{\rho}_x} \leq & \;  C_{Str} { \| \nabla {\mathcal T}_{2}\|_{L^2_{[0, \tau_1]} L^{r'}_x}  }\\
  \leq&  \; C_{Str} \frac{\|\theta'\|_{L^\infty}}{b}  \|u_1-u_2\|_{{\mathcal W}{[\tau_1]} } \| \nabla \big( |u_1|^{2\sigma} u_1\big) \|_{L^2_{[0, \tau_1]} L^{r'}_x}\\
\leq &\;  C_{Str}   \frac{\|\theta'\|_{L^\infty}}{b} (2\sigma +1) \|u_1-u_2\|_{{\mathcal W}{[\tau_1]}} \| |u_1 |^{2\sigma }\nabla u_1 \|_{L^2_{[0, \tau_1]} L^{r'}_x}\\
  \leq &\;  C_{Str}   \frac{\|\theta'\|_{L^\infty}}{b} (2\sigma +1) \|u_1-u_2\|_{{\mathcal W}{[\tau_1]} }  \| u_1\|_{L^\gamma_{{\mathcal S}{[\tau_1]} }}^{2\sigma} \| \nabla u_1\|_{{\mathcal W}{[\tau_1]}},
\end{align*}
which implies
\[ \| \nabla T_{1,2}(u_1,u_2)\|_{{\mathcal W}{[\tau]}} \leq C_{Str} \, \frac{n+2}{n-2} \,  \|\theta'\|_{L^\infty}  \, 2^{\frac{n+2}{n-2}}  \; a^{\frac{4}{n-2} }\;   \|u_1-u_2\|_{{\mathcal W}{[\tau_1]}}  ,
\] 
and the same estimates hold for $\| \nabla T_{1,2}(u_1,u_2)\|_{L^\infty_{[0,\tau_2]} L^{2}_x}$,
while similar computations imply
\begin{align*}
 \|T_{1,2}(u_1,u_2)\|_{L^\gamma_{[0,\tau]}L^\rho_x} +  \|T_{1,2}(u_1,u_2)\|_{L^\infty_{[0,\tau_1]}L^2_x}
 \leq &\; 2 C_{Str}   \frac{\|\theta'\|_{L^\infty}}{b} \|u_1-u_2\|_{{\mathcal W}{[\tau_1]}}  \| u_1\|_{L^\gamma_{[0,\tau_1]} L^p_x}^{2\sigma} \|  u_1\|_{L^\gamma_{[0, \tau_1]} L^\rho_x},
 \\
 \leq &\; 2 \,  C_{Str}  \,  \|\theta'\|_{L^\infty} \, 2^{\frac{n+2}{n-2}} \; a^{\frac{4}{n-2} } \;  \|u_1-u_2\|_{{\mathcal W}{[\tau_1]}} . 
 \end{align*}
Putting together estimates for $T_{1,2}$, we get
\begin{equation}	\label{upp_T12} 
\|T_{1,2}(u_1,u_2)\|_{{\mathcal Y}_{a,b,c}(\tau)}  \leq C_{Str} \|\theta'\|_{L^\infty} 2^{\frac{n+2}{n-2}} a^{\frac{4}{n-2}  }  \Big[ \frac{n+2}{n-2}  (2+C_{Sob})  + 2 \Big]
 \|u_1-u_2\|_{L^\gamma(\Omega; {\mathcal W}{[\tau_1]})}.  
\end{equation}
Using \cite[p. 648]{KenMer}, since $2\sigma -1 = \frac{6-n}{n-2}>0$,  we have
\begin{align}
\big| |u_1|^{2\sigma} u_1 - |u_2|^{2\sigma} u_2\big| \leq & (2\sigma +1) \big[ |u_1|^{2\sigma} + |u_2|^{2\sigma} \big] |u_1-u_2|, \label{E:20a}\\
| \nabla ( |u_1|^{2\sigma} u_1) - |u_2|^{2\sigma} u_2)| \leq &(2\sigma +1)\big[  |u_1|^{2\sigma} |\nabla(u_1-u_2)| 
+ 2\sigma |u_1-u_2| (|u_1|^{2\sigma -1} + |u_2|^{2\sigma -1}) \nabla u_2  \big]. \label{E:20b}
\end{align}
Then \eqref{Stri_int}, \eqref{E:20a}, \eqref{E:20b} and H\"older's inequality based on $\frac{1}{r'} = \frac{2\sigma}{p} + \frac{1}{\rho}$ and $\frac{1}{2} = \frac{2\sigma +1}{\rho} $
imply
\begin{align*}
\|\nabla T_{1,3}(u_1,u_2)\|_{L^\gamma_{[0,\tau]} L^\rho_x} \leq &  C_{Str} \| \nabla (|u_1|^{2\sigma} u_1 - |u_2|^{2\sigma} u_2)\|_{L^2_{[0, \tau_1]} L^{r'}_x}\\
\leq & C_{Str} (2\sigma +1) \Big[  \|u_1\|_{L^\gamma_{[0, \tau_1]} L^p_x}^{2\sigma} \|\nabla(u_1-u_2)\|_{L^\gamma_{[0, T]}L^\rho_x} \\
&\quad 
+2\sigma \big( \|u_1\|_{L^\gamma_{[0, \tau_1]}L^p_x}^{2\sigma -1} +  \|u_2\|_{L^\gamma_{[0, \tau_1]}L^p_x}^{2\sigma -1} \big) \|\nabla u_2\|_{L^\gamma_{[0, \tau_1]}L^\rho_x} 
\|u_1-u_2\|_{L^\gamma_{[0,T]}L^p_x}\Big]\\
\leq & C_{Str} 2^{\frac{4}{n-2} }   \frac{n+2}{n-2} \Big[ a^{\frac{4}{n-2} }\,  \|\nabla( u_1-u_2)\|_{{\mathcal W}{[\tau_1]}}  + \frac{8}{n-2}\,   a^{\frac{6-n}{n-2}} \, b \|u_1-u_2\|_{{\mathcal S}{[\tau_1]}}\Big]. 
\end{align*}
The same bound is valid for $ \|\nabla T_{1,3}(u_1,u_2)\|_{L^\infty_{[0,\tau]}L^2_x}$. 
A similar computation implies
\begin{align*}
\| T_{1,3}(u_1,u_2)\|_{L^\gamma_{[0,T]} L^\rho_x}& + \|T_{1,3}(u_1,u_2)\|_{L^\infty_{[0,T]} L^2_x}\leq \, 2 C_{Str} \| |u_1|^{2\sigma} u_1 - |u_2|^{2\sigma} u_2\|_{L^2_{[0, \tau_1]} L^{r'}_x} \\
\leq & \, 2\,  C_{Str} (2\sigma +1) \big( \|u_1\|_{L^\gamma_{[0, \tau_1]} L^p_x}^{2\sigma} + \|u_2\|_{L^\gamma_{[0, \tau_1]} L^p_x}^{2\sigma}\big) \|u_1-u_2\|_{L^\gamma_{[0,\tau_1]} L^\rho_x} \\
\leq &\,  4  \, C_{Str} \, \frac{n+2}{n-2} \,  2^{\frac{4}{n-2} }\,  a^{\frac{4}{n-2} } \|u_1-u_2\|_{{\mathcal W}{[\tau_1]}}.
\end{align*}
Similar bounds hold for $\| T_{1,3}(u_1,u_2)\|_{L^\infty_{[0, \tau]} L^2_x}$. 
Putting together the estimates for $T_{1,2}$ we get
\begin{align} 	\label{upp_T13}
\| T_{1,3}(u_1,u_2)\|_{{\mathcal Y}_{a,b,c}(\tau)} & \leq C_{Str}  2^{\frac{4}{n-2} } { \frac{n+2}{n-2} \Big[ } a^{\frac{4}{n-2} } (2+C_{Sob})
 \| \nabla (u_1-u_2)\|_{L^\gamma(\Omega ; {\mathcal  W}{[\tau_1]})}  \nonumber \\
& +{2  
a^{\frac{4}{n-2}} } \, \| u_1-u_2\|_{L^\gamma(\Omega ; {\mathcal W}{[\tau_1]} )}+ \frac{8}{n-2} \, (2+C_{Sob})\,  a^{\frac{6-n}{n-2}} \, b\, 
 \|u_1-u_2\|_{L^\gamma(\Omega ; {\mathcal S}{[\tau_1]})}.
\end{align}

Next, on the set $\{ \tau_1= \tau^{\SS,a}_1 \leq \tau^{\WW ,b}_1\}$, we have $\theta_a(\|u_1\|_{{\mathcal S}_{[s]}}) =0$ for $s\in (\tau_1, \tau_2]$. Hence, 
\[ {\mathcal T}_{4}(s) = \big[  \theta_a\big(\|u_2\|_{{\mathcal S}_{[s]}}\big) - \theta_a\big(\|u_1\|_{{\mathcal S}_{[s]}}\big)  \big]   
 \theta_b \big( \|u_2\|_{\WW_{[s]}}\big)\; |u_2(s)|^{2\sigma} u_2(s). 
\]
Therefore, the proof of \eqref{upp_T11} implies
\begin{equation}		\label{upp_T14}
\| T_{1,4}(u_1,u_2)\|_{{\mathcal Y}_{a,b,c}(\tau)} \leq C_{Str} \, \|\theta'\|_{L^\infty} \, 2^{\frac{n+2}{n-2} } \, a^{\frac{6-n}{n-2} }\, b \,  \Big[ \frac{n+2}{n-2} \, (2+  C_{Sob}) +2 \Big]
 \|u_1-u_2\|_{L^\gamma(\Omega; {\mathcal S}{[\tau_2]})}. 
\end{equation} 
Similarly, on the set $\{\tau_1=\tau^{\WW,b}_1 < \tau^{\SS,a}_1 \}$, we have  $\theta_b(\|u_1\|_{\WW{[s]}}) =0$ for $s\in (\tau_1, \tau_2]$. Hence, the proof of 
\eqref{upp_T12} implies 
\begin{equation}		\label{upp_T15}
\| T_{1,5}(u_1,u_2)\|_{{\mathcal Y}_{a,b,c}(\tau)} \leq  
C_{Str} \|\theta'\|_{L^\infty} 2^{\frac{n+2}{n-2}} a^{\frac{4}{n-2}  }  \Big[ \frac{n+2}{n-2}  (2+C_{Sob})  + 2 \Big]
 \|u_1-u_2\|_{L^\gamma(\Omega; {\mathcal W}{[\tau_2]})}.  
 \end{equation}

Finally, observe that  for $j=2,3$, $T_j(u_1)-T_j(u_2) = T_j(u_1-u_2)$. Therefore, the upper estimates \eqref{Sob} and  \eqref{upper_T21_1bis}--\eqref{upp_nablaT2_2} imply for $\tau = \tau(\delta)
\wedge T$ 
\begin{align} 	\label{diff_T2}
\| T_2(u_1)-T_2(u_2)& \|_{{\mathcal Y}_{a,b,c}(\tau)} \leq  \|\phi\|_{R(L^2_x ; W^{1, 2\rho/(\rho-2)}_x)} 
T^{\frac{1}{2}-\frac{1}{\gamma}} \Big[ C(\gamma,\rho) (2+C_{Sob})  \|u_1-u_2\|_{L^\gamma(\Omega ; L^\infty_{[0,T]} H^1_x)}  \nonumber \\
& +
C(\gamma) \big( (2 + C_{Sob}) \|u_1-u_2\|_{L^\gamma(\Omega ; {\mathcal W}{[T]})}  + (1+C_{Sob})
 \|\nabla (u_1-u_2)\|_{L^\gamma(\Omega ; {\mathcal W}{[T]})}\big) \Big] . 
\end{align}
Similarly, \eqref{Sob} and  \eqref{upp_nablaT3_W}-- \eqref{upp_T3_H1} imply 
\begin{align}	\label{upp_diff_T3}
\| T_3(u_1)- T_3(u_2) \|_{{\mathcal Y}_{a,b,c}(\tau)}  \leq  &\;  \kappa C_{Str} \|\phi\|_{R(L^2_x ; W^{1,2\rho/(\rho-2)}_x)}^2   T^{1-\frac{2}{\gamma}}  
\big[ (4+C_{Sob}) \|u_1-u_2\|_{L^\gamma (\Omega ; {\mathcal W}{[T]} )} \nonumber \\
&\qquad\qquad + (2+C_{Sob}) \| \nabla (u_1-u_2)\|_{L^\gamma(\Omega ; {\mathcal W}{[T]})} \big] .
\end{align}

Collecting the estimates \eqref{upp_T11} --  \eqref{upp_diff_T3}, we obtain
\begin{align}		\label{diff-G}
\| {\mathcal G}(u_1)& - {\mathcal G}(u_2) \|_{{\mathcal Y}_{a,b,c}(\tau)} \leq  \| u_1-u_2\|_{L^\gamma(\Omega ; L^\infty_{[0,\tau]} H^1_x)}
\|\phi\|_{R(L^2_x; W^{1,2\rho/(\rho-2)}_x )} T^{\frac{1}{2}-\frac{1}{\gamma}} C(\gamma,\rho) (2+C_{Sob})  \nonumber \\
&+  \| u_1-u_2\|_{L^\gamma(\Omega ; {\mathcal W}{[\tau]})}  \Big\{  2 \, C_{Str}\, 2^{\frac{n+2}{n-2}} \, a^{\frac{4}{n-2}} \Big(   \|\theta'\|_{L^\infty}  \,  \Big[ \frac{n+2}{n-2} (2+C_{Sob})+2\Big]  + 
 \frac{n+2}{n-2}\,  \Big)  \nonumber \\
&\qquad \quad 
+ T^{\frac{1}{2}-\frac{2}{\gamma}} \Big( (2+C_{Sob})  T^{\frac{1}{\gamma}}  C(\gamma) \|\phi\|_{R(L^2_x ; W^{1,2\rho/(\rho-2)})} + \kappa C_{Str} (4+C_{Sob}) 
T^{\frac{1}{2}}  \|\phi\|_{R(L^2_x ; W^{1,2\rho/(\rho-2)})}^2 \Big) \Big\} 
\nonumber \\
&+ \| \nabla(u_1-u_2)\|_{L^\gamma(\Omega ; {\mathcal W}{[\tau]})} \Big\{ C_{Str} (2+C_{Sob}) \,  2^{\frac{4}{n-2}} \, \frac{n+2}{n-2} \; a^{\frac{4}{n-2}} \nonumber \\
& \qquad \quad + T^{\frac{1}{2}-\frac{2}{\gamma}} \Big( (2+C_{Sob}) T^{\frac{1}{\gamma}}   C(\gamma)  \|\phi\|_{R(L^2_x ; W^{1,2\rho/(\rho-2)})} 
+ \kappa C_{Str} (2+C_{Sob}) T^{\frac{1}{2}}  \|\phi\|_{R(L^2_x ; W^{1,2\rho/(\rho-2)})}^2\Big) \Big\} 
\nonumber \\
&+ \|u_1-u_2\|_{L^\gamma(\Omega ; {\mathcal S}{[\tau]} )} \, a^{\frac{6-n}{n-2}} \, b\, \Big\{ 2 C_{Str} \|\theta'\|_{L^\infty} 2^{\frac{n+2}{n-2}} \Big[ \frac{n+2}{n-2} (2+C_{Sob}) +2\Big] \nonumber \\
&\qquad \quad  + C_{Str} 2^{\frac{4}{n-2}} \frac{8}{n-2} \, (2+C_{Sob})\Big\}. 
\end{align} 

Set $c=3A$ and $b=6 C_{Str}A$. Let $a,\delta$ and $T$ be defined as in Step 1. Let $\tilde{a}\leq a$ satisfy the following inequalities:
\begin{align*}
 \tilde{a}^{\frac{4}{n-2}} & \; 2\, C_{Str} \, 2^{\frac{n+2}{n-2}} \, \Big( \| \theta'\|_{L^\infty}\, \Big[ \frac{n+2}{n-2} (2+C_{Sob}) +2\Big] + \frac{n+2}{n-2}\Big) < \frac{1}{2}, \\
\tilde{a}^{\frac{4}{n-2}} & \;  C_{Str} (2+C_{Sob}) \, 2^{\frac{4}{n-2}}\, \frac{n+2}{n-2} < \frac{1}{2}, \\
\tilde{a}^{\frac{6-n}{n-2}} \, b &\; \Big\{ 2 C_{Str} \|\theta'\|_{L^\infty} 2^{\frac{n+2}{n-2}} \Big[ \frac{n+2}{n-2} (2+C_{Sob}) +2\Big] 
+ C_{Str} 2^{\frac{4}{n-2}} \frac{8}{n-2} \, (2+C_{Sob})\Big\}<1.
\end{align*} 

Take $\tilde{\delta} = \frac{\tilde{a}}{3}$, and let $\tilde{T}\leq T$ satisfy the inequalities:
\begin{align*}
\tilde{T}^{\frac{1}{2}-\frac{1}{\gamma}} & \; C(\gamma,\rho) (2+C_{Sob})  \|\phi\|_{R(L^2_x ; W^{1,2\rho/(\rho-2)})}  <1, \\
\tilde{T}^{\frac{1}{2}-\frac{2}{\gamma}}  & \;  \big[ (2+C_{Sob} T^{\frac{1}{\gamma}}  C(\gamma) \|\phi\|_{R(L^2_x ; W^{1,2\rho/(\rho-2)})} 
 + \kappa C_{Str} (4+C_{Sob}) T^{\frac{1}{2}} \|\phi\|_{R(L^2_x ; W^{1,2\rho/(\rho-2)})}^2\big] < \frac{1}{2}. 
\end{align*}
Then, replacing $a,T$ by $\tilde{a}$ and $\tilde{T}$,  all coefficients of the various norms of $u_1-u_2$ in the right-hand side of \eqref{diff-G} are strictly less than 1, which implies
that ${\mathcal G}$ is a contraction of ${\mathcal Y}_{\tilde{a},b,c}(\tilde{\tau})$, where $\tilde{\tau} = \tau(\tilde{\delta}) \wedge \tilde{T}$. 

Therefore, the truncated equation \eqref{eq_trunc} with $\tilde{a}$ instead of $a$ has a unique solution $v$ on the random time interval $[0, \tilde{\tau}]$. 
Set 
\[ \tilde{\tau}(u_0) = \inf\{ s\geq 0 \, : \, \|v(\cdot)\|_{{\mathcal S}{[s]}} \geq \tilde{a}\} \wedge \inf\{ s\geq 0\, :\, \|v(\cdot)\|_{{\mathcal W}{[s]}} \geq b\} \wedge \tilde{\tau}.\]
Then $\tilde{\tau}(u_0)>0$ a.s., 
and $v$ is the unique solution to \eqref{NLS_Ito_critical}.

Finally, let $\tau(u_0)$ denote the supremum of all stopping times such that there exists a unique solution to \eqref{NLS_Ito_critical} on the time interval $[0, \tilde{\tau}(u_0))$,
 with the required properties, thus, completing the proof.
\end{proof}

\section{Maximal existence time in the focusing case} \label{S:time}
In this section we consider the focusing ($\lambda = 1$) case in \eqref{E:NLS} and recalling the local existence time 
$\tau(u_0)$ in each case of the noise from Theorems \ref{th_lwp-add} and \ref{th_lwp_mul}, we prove bounds on $P(\tau(u_0) >T)$ for some given time $T>0$ and give estimates on $\EE(\tau(u_0))$, provided that the energy and kinetic energy
of the initial condition are bounded by similar quantities of the ground state $Q$ (from \eqref{E:Q}) as in the deterministic case of Theorem \ref{T:main-deter} (i).

From the explicit formula \eqref{E:Q} it easily follows that $Q$ is positive, decaying polynomially fast (and does not belong to $L^2(\RR^n)$ in dimensions $n=3,4$, but it does in dimension $5$), $Q\in \dot{H}^1(\RR^n)$, and 
\begin{equation} \label{nabla-H-Q}
   \|\nabla Q\|_{L^2(\RR^n)} = C_n^{\,-{n}/{2}}, \quad 
   \quad H(Q)= n^{-1} C_n^{-n}.
\end{equation}

We now review the energy trapping argument, similar to that proved in the deterministic case (see \cite[Theorem~3.9]{KenMer}).
Recalling the energy from \eqref{H} and the Sobolev inequality \eqref{E:Sobolev}, we bound the energy as
$$
H(u) \leq \frac12 \|\nabla u\|_{L^2(\RR^n)}^2  - \frac{n-2}{2n}  C_n^{\frac{2n}{n-2}} \|\nabla u\|^{\frac{2n}{n-2} }_{L^2(\RR^n)}.
$$
Solving for the gradient, we have 
\begin{align} \label{H-nabla}
\|\nabla u\|_{L^2(\RR^n)}^2  
    \leq     2 H(u) +     \frac{n-2}{n} C_n^{\frac{2n}{n-2} } \|\nabla u\|_{L^2(\RR^n)}^{\frac{2n}{n-2}}. 
\end{align}
Thinking of $\|\nabla u\|^2_{L^2(\RR^n)}$ as a variable $x$, for $x\geq 0$, we set 
\begin{equation} 		\label{def-f} 
f(x) = \frac{1}{2}x - \frac{n-2}{2n}  C_n^{\frac{2n}{n-2} } x^{\frac{n}{n-2} }. 
\end{equation} 
Then $f'(x)=0$ if and only if $x=0$ or $x=C_n^{-n}$, which by \eqref{nabla-H-Q} can be denoted as $x = x_c:= \|\nabla Q\|_{L^2(\RR^n)}^2$. 
Moreover, $f$ is strictly increasing on the interval $[0, x_c]$ and decreasing on $[x_c,\infty)$. Substituting $x_c$ into the function $f$, we get $f(x_c)= n^{-1} C_n^{-n} = H(Q)$, by \eqref{nabla-H-Q}. 
From \eqref{H-nabla} we have $f(\|\nabla u\|_{L^2(\RR^n)}^2) \leq H(u)$. 

Let $u$ be the solution to \eqref{E:SNLS-m}  or \eqref{NLS-additive}. 
Suppose  that $\|\nabla u_0\|_{L^2(\RR^n)}^2 < x_c$ a.s.
and that for some $\beta \in (0,1)$ we have either 
$H(u_0) = \beta  f(x_c)$ if $u_0$ is deterministic or $H(u_0)\leq \beta H(Q)$ a.s. if $u_0$ is random. 

For $\delta \in (\beta,1)$, let $ \tau_\delta = \inf\{ s\geq 0\; : \; H(u(s)) \geq \delta H(Q)\} \wedge \tau(u_0)$.
Then by a.s. continuity of the random variable $\|\nabla u(\cdot)\|_{L^2(\RR^n)}^2$ on the (random) time interval $[0, \tau_\delta]$,
we deduce that 
\begin{equation}  \label{trapping}
\| \nabla u(s)\|_{L^2(\RR^n)} \leq \sqrt{x_c} = \|\nabla Q\|_{L^2(\RR^n)} \quad \mbox{\rm a.s. for} \quad s\in [0, \tau_\delta]. 
\end{equation}

We now proceed with each type of noise separately.

\subsection{Additive noise}
We suppose that the conditions of Theorem \ref{th_lwp-add} are satisfied, and we find constraints on $u_0$ and 
$T$ such that $P(\tau(u_0) >T) >0$, we also estimate a lower bound on $\EE\big(\tau(u_0)\big)$. 
We recall the upper bounds for the Hamiltonian derived in the energy-subcritical setting in \cite[(3.10)]{deB_Deb_H1} and \cite[(3.31)]{MilRou}. 
The same argument applies in the energy-critical setting to obtain a similar bound 
for $t < \tau(u_0)$, i.e.,
\begin{align}\label{H-add}
    H(u(t)) \leq \; 
    H(u_0) + & \frac{1}{2} \tau \|\phi\|_{L_2^{0,1}}^2  + \mbox{\rm Im } \Big(\sum_{k\geq 0} \int_0^\tau \!\! \int_{\RR^n} 
\nabla \overline{u(s,x)} \nabla (\phi e_k)(x) dx d\beta_k(s) \Big)\nonumber \\
&- \mbox{\rm Im } \Big( \sum_{k\geq 0}  \int_0^\tau \!\! \int_{\RR^n}  
| u(s,x)|^{\frac{4}{n-2}} \overline{u(s,x)} (\phi e_k)(x) dx d\beta_k(s) \Big). 
\end{align}

The following lemma describes upper bounds on the expected value of the energy uniformly on the time interval $[0,\tau]$.

\begin{lemma}\label{lem_Hadd}
Let the assumptions of Theorem \ref{th_lwp-add} be satisfied. Then for every stopping time $\tau \in \big(0, \tau(u_0)\big)$ we have 
\begin{align}		\label{E_sup_H_add}
\EX \Big( \sup_{s\leq \tau} H\big(u(s)\big)\Big) \leq &\; \EX \big(H(u_0) \big)
+ \frac{1}{2} \|\phi\|_{L^{0,1}_2}^2 \EX (\tau) + 3 \|\phi\|_{L^{0,1}_2} \EX \Big(
\sqrt{\tau}  \sup_{s\leq \tau} 
\|\nabla u(s)\|_{L^2_x}  \Big) \nonumber \\
&\; + 3 C_n^{\frac{2n}{n-2}} \| \phi\|_{L_2^{0,1} }\EX \Big( \sqrt{\tau} \sup_{s\leq \tau} 
 \|\nabla u(s)\|_{L^2_x}^{\frac{n+2}{n-2}} \Big),
\end{align}
where $C_n$ is defined in \eqref{nabla-H-Q}.
\end{lemma}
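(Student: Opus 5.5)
The proof starts from the Itô formula for the energy, in the form \eqref{H-add}. For $t\le\tau<\tau(u_0)$ the energy $H(u(t))$ equals $H(u_0)$, plus the Itô correction, plus two stochastic integrals. The Itô correction is $\frac12\int_0^t\|\phi\|_{L^{0,1}_2}^2ds$. There is also a correction coming from the second derivative of the potential term, which has the favourable sign in the focusing case, or is dropped as in \cite{deB_Deb_H1,MilRou}. The two stochastic integrals are
\[
M_1(t)=\mathrm{Im}\sum_k\int_0^t\!\!\int \nabla\bar u\,\nabla(\phi e_k)\,dx\,d\beta_k,\qquad
M_2(t)=-\mathrm{Im}\sum_k\int_0^t\!\!\int |u|^{\frac4{n-2}}\bar u\,\phi e_k\,dx\,d\beta_k .
\]
Taking the supremum over $s\le\tau$ and then expectations gives
\[
\EX\Big(\sup_{s\le\tau}H(u(s))\Big)\le \EX H(u_0)+\tfrac12\|\phi\|^2_{L^{0,1}_2}\EX(\tau)+\EX\sup_{s\le\tau}|M_1(s)|+\EX\sup_{s\le\tau}|M_2(s)|.
\]
It therefore remains to bound the two martingale suprema. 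To make them genuine martingales with finite moments, I would first localize with stopping times $\tau\wedge\inf\{t:\|\nabla u(t)\|_{L^2}\ge N\}$ and then let $N\to\infty$ by monotone convergence.

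For $M_1$ I would use the Davis (BDG with $p=1$) inequality, with constant $3$:
\[
\EX\sup_{s\le\tau}|M_1(s)|\le 3\,\EX\Big(\Big(\int_0^\tau\sum_k\big|\langle\nabla u(s),\nabla\phi e_k\rangle\big|^2ds\Big)^{1/2}\Big).
\]
By Cauchy--Schwarz, $\sum_k|\langle\nabla u,\nabla\phi e_k\rangle|^2\le\|\nabla u\|_{L^2}^2\|\phi\|_{L^{0,1}_2}^2$. Pulling out $\sup_{s\le\tau}\|\nabla u(s)\|_{L^2}$ leaves the factor $\sqrt\tau$. This yields the third term of \eqref{E_sup_H_add}.

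For $M_2$ I would use the same Davis inequality. The only new point is to bound
\[
\sum_k\Big|\int|u|^{\frac4{n-2}}\bar u\,\phi e_k\Big|^2 .
\]
Set $g=|u|^{\frac4{n-2}}u$. I would write $\int \bar g\,\phi e_k=\langle \phi^*g,e_k\rangle$ in the dual sense, or estimate term by term. H\"older with exponents $\frac{2n}{n+2}$ and $\frac{2n}{n-2}$ gives
\[
\Big|\int \bar g\,\phi e_k\Big|\le\|g\|_{L^{2n/(n+2)}}\|\phi e_k\|_{L^{2n/(n-2)}}.
\]
Since $\|g\|_{L^{2n/(n+2)}}=\|u\|_{L^{2n/(n-2)}}^{\frac{n+2}{n-2}}$, the Sobolev inequality \eqref{E:Sobolev} gives
\[
\|g\|_{L^{2n/(n+2)}}\le C_n^{\frac{n+2}{n-2}}\|\nabla u\|_{L^2}^{\frac{n+2}{n-2}},\qquad \|\phi e_k\|_{L^{2n/(n-2)}}\le C_n\|\nabla\phi e_k\|_{L^2}.
\]
Summing the squares over $k$ gives the bound
\[
C_n^{\frac{2n}{n-2}}\|\nabla u\|_{L^2}^{\frac{n+2}{n-2}}\,\|\phi\|_{L^{0,1}_2}.
\]
After the square root of the time integral and pulling out the supremum, this yields exactly the last term of \eqref{E_sup_H_add}.

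The main obstacle is the rigor of the Itô formula \eqref{H-add} in the energy-critical setting with only $\dot H^1$ regularity. That formula is quoted by the paper, so I take it as given. I would justify it as in \cite{deB_Deb_H1}, by regularizing (e.g.\ via $\phi_n\in L^{0,2}_2$ and smoothing operators) and passing to the limit up to the stopping time. In that limit one needs the local bound $u\in C([0,\tau];\dot H^1)$ from Theorem \ref{th_lwp-add}.
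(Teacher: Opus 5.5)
Your proposal is correct and follows essentially the same route as the paper. Both start from the energy inequality \eqref{H-add}, apply the Davis inequality with constant $3$ to each stochastic integral, and bound the quadratic variations as follows: Cauchy--Schwarz for the gradient term, and H\"older with exponents $\frac{2n}{n+2},\frac{2n}{n-2}$ plus the Sobolev inequality \eqref{E:Sobolev} for the nonlinear term, which yields exactly the constant $C_n^{\frac{2n}{n-2}}$. Your localization via $\inf\{t:\|\nabla u(t)\|_{L^2}\ge N\}$ simply spells out what the paper calls ``the local property of stochastic integrals'', and the bound you display for the nonlinear term is the square root of the sum over $k$, not the sum of squares itself.
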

\begin{proof}
Let $\tau<  \tau(u_0)$  be a stopping time. 
Using the identity \eqref{H-add}, the Davis inequality, the local property of stochastic integrals and the Cauchy-Schwarz and H\"older's inequalities, we obtain
\begin{align*}
\EX \Big( \sup_{s\leq \tau}  H\big(u(s)\big)\Big) &\leq \;\EX \big( H(u_0) \big) + \frac{1}{2}  \|\phi\|_{L^{0,1}_2}^2 \EX (\tau)  
+ 3\EX \Big( \Big\{ \int_0^\tau  \sum_{k\geq 0} \Big( \int_{\RR^n}\! \! |\nabla u(s,x)| 
 |\nabla (\phi e_k)(x)| dx \Big)^2 ds \Big\}^{\frac{1}{2}} \Big) \\
&\hspace{3cm}  + 3 \EX  \Big( \Big\{\int_0^\tau   \sum_{k\geq 0} \Big( \int_{\RR^n} |u(s,x)|^{\frac{n+2}{n-2} } |\phi e_k(x)| \, dx\Big)^2 ds 
 \Big\}^{\frac{1}{2}} \Big) 	\label{E_sup_H_add_v1} \\
& \leq \; \EX \big( H(u_0) \big) + \frac{1}{2}  \|\phi\|_{L^{0,1}_2}^2 \EX (\tau)
 +3  \|\phi\|_{L_2^{0,1}} \EX \Big(   \sqrt{\tau} \sup_{s\leq \tau } \|\nabla u(s)\|_{L^2_x}  \Big)     \\
&\hspace{2cm} +3 \EX \Big( \Big\{ \int_0^\tau  \sum_{k\geq 0}  \Big( \int_{\RR^n} 
|u(s,x)|^{\frac{2n}{n-2}} dx \Big)^{\frac{n+2}{n}} 
 \Big( \int_{\RR^n} |\phi e_k(x)|^{\frac{2n}{n-2}} dx \Big)^{\frac{n-2}{n}} ds \Big\}^{\frac{1}{2}} \Big). 
\end{align*}
Using \eqref{E:Sobolev}, we deduce
\[ \sum_{k\geq 0} \|\phi e_k \|_{L_x^{\frac{2n}{n-2}}}^2 \leq C_n^2 \|\phi\|_{L_2^{0,1}}^2 \quad 
\mbox{\rm and} \quad   \|u(s)\|_{L_x^{\frac{2n}{n-2}}} \leq C_n \|\nabla u(s)\|_{L^2_x}. \]
Therefore,
\begin{align*}
    3 \EX \Big( \Big\{ \int_0^\tau &\sum_{k\geq 0}  \Big( \int_{\RR^n} 
|u(s,x)|^{\frac{2n}{n-2} } dx \Big)^{\frac{n+2}{n} }
 \Big( \int_{\RR^n} |\phi e_k(x)|^{\frac{2n}{n-2}} dx \Big)^{\frac{n-2}{n}} ds \Big\}^{\frac{1}{2}} \Big) \\
 & \leq 3 C_n^{\frac{2n}{n-2}} \|\phi\|_{L^{0,1}_2} \EE\Big( \sqrt{\tau} \sup_{s\leq \tau } \|\nabla u(s)\|_{L^2_x}^{\frac{n+2}{n-2}}\Big),
\end{align*}
which completes the proof of this lemma. 
\end{proof} 

\begin{Th}\label{T*-add-genIC}
Let $u_0$ be an $\dot{H}^1(\RR^n)$-valued random variable   such that  
$\|\nabla u_0\|_{L^2_x} <  \|\nabla Q\|_{L^2_x}$ a.s. and $H(u_0)\leq \beta H(Q)$ a.s.  for some
constant $\beta \in  (0,1)$. 
Suppose that $\phi \in L^{0,1}_2$. 
Let $u(t)$ be the solution to  \eqref{NLS-additive}. 

Then 
$P(\tau(u_0)>T) >0$ for every $T<T^*$, where
\begin{equation}        \label{T*-gere}
T^* = \frac{ 36 }{C_n^n \, \|\phi\|_{L^{0,1}_2}^2 
} \Big( \big(1+\tfrac{1}{18n}(1 - \beta)\big)^{\frac12} -1\Big)^2 .
\end{equation}

Furthermore,  
\[
\EE\big(\tau(u_0)  \big) \geq 
\frac{ 36 }{C_n^n \, \|\phi\|_{L^{0,1}_2}^2 } \Big( \big(1+\tfrac{1}{18n}(1 - \beta)\big)^{\frac12} -1\Big)^2.
\]
\end{Th}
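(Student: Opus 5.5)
The plan is to combine the energy-trapping mechanism \eqref{trapping} with the bound of Lemma~\ref{lem_Hadd} on the expected supremum of the energy. We then estimate the probability that the energy reaches the level $\delta H(Q)$ before a deterministic time $T$.

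\emph{Setup.} Fix $\delta\in(\beta,1)$ and $T>0$, and set $\tau:=\tau_\delta\wedge T$, where $\tau_\delta$ is the exit time defined just before \eqref{trapping}. On $[0,\tau]$ the trapping estimate gives $\|\nabla u(s)\|_{L^2_x}\le \|\nabla Q\|_{L^2_x}=C_n^{-n/2}$ a.s.

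\emph{Energy bound.} I plug this into \eqref{E_sup_H_add}, using $\EX(\tau)\le T$ and $\EX(\sqrt\tau)\le\sqrt T$. Since $C_n^{\frac{2n}{n-2}}\,C_n^{-\frac n2\cdot\frac{n+2}{n-2}}=C_n^{-n/2}$, both stochastic terms contribute $3\|\phi\|_{L^{0,1}_2}C_n^{-n/2}\sqrt T$. Writing $H(Q)=n^{-1}C_n^{-n}$, this yields
\[
\EX\Big(\sup_{s\le\tau}H(u(s))\Big)\le \beta H(Q)+\tfrac12\|\phi\|_{L^{0,1}_2}^2T+6\|\phi\|_{L^{0,1}_2}C_n^{-n/2}\sqrt T .
\]

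\emph{Link to existence time.} On the event $\{\tau(u_0)\le T\}$, I claim that $\tau_\delta<\tau(u_0)$, so that $\sup_{s\le\tau}H(u(s))\ge\delta H(Q)$ there. The justification is that, while the energy stays below $\delta H(Q)$, the gradient stays trapped strictly below $\|\nabla Q\|_{L^2}$. Then the norms $\|u\|_{\mathcal S}$ and $\|\nabla u\|_{\mathcal W}$ cannot blow up on bounded intervals, by the Kenig--Merle global theory applied to the pathwise equation $u-I$ via the perturbation argument, together with the finite moments \eqref{mom-IW}--\eqref{mom-IS} of $I$. Hence the local solution of Theorem~\ref{th_lwp-add} can be continued. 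I expect this continuation/blow-up-alternative step to be the main obstacle, and in the writeup it may have to be taken as the working definition of $\tau(u_0)$ relative to $\tau_\delta$.

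\emph{Probability bound.} Granting the claim, I need a lower bound on the energy that holds everywhere. For that I use $H(u(s))\ge f(\|\nabla u(s)\|^2_{L^2})\ge0$ from \eqref{def-f}, valid on the trapped region. Since $H$ might still be negative at some points, I apply Markov's inequality to the nonnegative quantity $\sup_{s\le\tau}H(u(s))^+$. I will also need $\EX H(u(0))\ge 0$ to absorb the lower-order part. This gives
\[
\delta H(Q)\,P(\tau(u_0)\le T)\le \beta H(Q)+\tfrac12\|\phi\|^2T+6\|\phi\|C_n^{-n/2}\sqrt T .
\]
Therefore $P(\tau(u_0)>T)>0$ as soon as $\tfrac12\|\phi\|^2T+6\|\phi\|C_n^{-n/2}\sqrt T<(\delta-\beta)H(Q)$. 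Letting $\delta\uparrow1$ and solving this quadratic inequality in $X=\sqrt T$, the threshold is
\[
X=\frac{6C_n^{-n/2}}{\|\phi\|}\Big(\big(1+\tfrac{(1-\beta)}{18n}\big)^{1/2}-1\Big),
\]
because $\frac{2(1-\beta)n^{-1}C_n^{-n}}{36C_n^{-n}}=\frac{1-\beta}{18n}$. Squaring gives exactly $T^*$ in \eqref{T*-gere}; the constant $36/C_n^n$ is consistent with this.

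\emph{Expectation bound.} Finally I write $\EX(\tau(u_0))=\int_0^\infty P(\tau(u_0)>t)\,dt$ and bound the integrand from below by
\[
1-\frac{\beta H(Q)+\tfrac12\|\phi\|^2t+6\|\phi\|C_n^{-n/2}\sqrt t}{H(Q)}
\]
on $[0,T^*]$, or more crudely use $\EX(\tau(u_0))\ge \EX(\tau_\delta\wedge T)$. The cleanest route is probably to choose $\tau=\tau_\delta$ directly (without the cap $T$) in Lemma~\ref{lem_Hadd}. Then the lemma gives $\delta H(Q)\le\beta H(Q)+\tfrac12\|\phi\|^2\EX\tau+6\|\phi\|C_n^{-n/2}\sqrt{\EX\tau}$ by Jensen, whenever the exit time is reached. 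Solving the same quadratic gives $\EX(\tau(u_0))\ge\EX(\tau_\delta)\ge T^*$ after $\delta\to1$; the case where the exit level is never reached is handled by a truncation argument.
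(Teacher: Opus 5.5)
Your argument follows the paper's proof essentially step by step. Both use Markov's inequality on $\sup_{s\le T\wedge\tau_\delta}H(u(s))$ together with Lemma~\ref{lem_Hadd}, and the trapping bound \eqref{trapping} to replace both gradient factors by $\|\nabla Q\|_{L^2_x}=C_n^{-n/2}$. Both then solve the quadratic inequality in $\sqrt{T}\,\|\phi\|_{L^{0,1}_2}$ and let $\delta\to1$. For the expectation, both apply Lemma~\ref{lem_Hadd} at $\tau_\delta$ with $\EE\sqrt{\tau_\delta}\le\sqrt{\EE\tau_\delta}$. Your constants are right. Two small remarks:
\begin{itemize}
\item The positive part in your Markov step is unnecessary. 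On $[0,\tau_\delta]$ you already have $H(u(s))\ge f(\|\nabla u(s)\|_{L^2_x}^2)\ge 0$ with $f$ from \eqref{def-f}, and $s=0$ belongs to the supremum.
\item The route through $\int_0^\infty P(\tau(u_0)>t)\,dt$ only gives a constant strictly smaller than $T^*$. The lower bound you would integrate is at most $1-\beta$ and vanishes at $t=T^*$. It is your last route, the paper's, that yields the stated bound.
\end{itemize}

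The continuation step you single out is a genuine gap, but the paper leaves the same gap open without comment. The paper writes $P(\tau_\delta\le T)=P\big(\sup_{s\le T\wedge\tau_\delta}H(u(s))\ge\delta H(Q)\big)$, and later $\EE\big(\sup_{s\le\tau_\delta}H(u(s))\big)=\delta H(Q)$. Both hold only if, on $\{\tau(u_0)\le T\}$ (respectively almost surely), the energy reaches $\delta H(Q)$ at a time strictly before $\tau(u_0)$. Applying Lemma~\ref{lem_Hadd}, which is stated for stopping times $<\tau(u_0)$, needs the same fact. So your write-up is no less complete than the paper's, and it is more explicit about what is being used.

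Your sketch does not close the gap as it stands, for three reasons.
\begin{itemize}
\item Kenig--Merle is proved for radial data only. Here $u(t)$ is in general not radial: no symmetry is assumed on $u_0$, and the additive forcing destroys radiality unless $\phi$ takes values in radial functions.
\item Global existence of each deterministic flow is not enough. You need a \emph{uniform} bound $L(\delta)$ on $\|w\|_{\mathcal S(\RR)}$ for all deterministic solutions with $H(w)\le\delta H(Q)$ and $\|\nabla w(0)\|_{L^2_x}<\|\nabla Q\|_{L^2_x}$. With it, the long-time perturbation lemma can be applied on each piece of a random partition $0=t_0<\dots<t_m=T$. On each piece you compare $u$ with the deterministic solution started from $u(t_k)$, where the trapping hypotheses hold, using that the restarted convolutions $i\int_{t_k}^{t}S(t-s)\,dW(s)$ are pathwise small in $\mathcal S$ and in $\nabla$-$\mathcal W$. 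A single comparison of $u-I$ with one deterministic solution on all of $[0,T]$ does not work, because the forcing is not small in these norms over intervals of length comparable to $T^*$.
\item You also need the blow-up alternative for the maximal solution of Theorem~\ref{th_lwp-add}: finiteness of $\|u\|_{\mathcal S[0,\tau(u_0))}$ must imply that the solution can be continued.
\end{itemize}
Unless you supply these ingredients, the clean option is the one you already mention: state the continuation property linking $\tau_\delta$ and $\tau(u_0)$ as an explicit hypothesis.
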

\begin{proof}
Let $\delta \in (\beta,1)$ and recalling the stopping time $\tau(u_0)$ from the local theory (Theorem \ref{th_lwp-add})  set 
\[ \tau_\delta = \inf\{ t\geq 0 : H(u(t)) \geq \delta H(Q) \} \wedge {\tau(u_0)} .\]
Then for $T>0$, using \eqref{E_sup_H_add}, the fact that $\phi \in {L^{0,1}_2}$ and the Markov inequality, we deduce 
\begin{align*}
P( \tau_\delta \leq  T)&  = P\Big( \sup_{s\leq  T\wedge \tau_\delta} H(u(s)) \geq \delta H(Q) \Big)\\
\leq &\;  \frac{\beta}{\delta} +   \frac{1}{2 \delta H(Q) } \|\phi\|_{L_2^{0,1}}^2 T  
+ \frac{3}{\delta H(Q)} \sqrt{T} \|\phi\|_{L_2^{0,1}} \EE\Big( \sup_{s\leq  \tau_\delta }
 \|\nabla u(s)\|_{L^2_x} + C_n^{\frac{2n}{n-2}} \sup_{s\leq \tau_\delta } \|\nabla u(s)\|_{L^2_x}^{\frac{n+2}{n-2}}\Big) .
\end{align*}
Given $\gamma \in (\frac{\beta}{\delta},1)$ we look for a condition on $T$ to ensure that $P(\tau_\delta \leq T) \leq \gamma$. 
The trapping property \eqref{trapping} shows that for $s\in [0,\tau_\delta]$ we have $\|\nabla u(s)\|_{L^2_x} \leq \|\nabla Q\|_{L^2_x}$.
Therefore, we strengthen the requirement to read
\[ \frac{\beta}{\delta} + \frac{1}{2\delta H(Q)} \|\phi\|_{L^{0,1}_2}^2 T + \frac{3}{\delta H(Q)} \sqrt{T} \|\phi\|_{L^{0,1}_2}
\Big( \|\nabla Q\|_{L^2_x} + C_n^{\frac{2n}{n-2} }
\|\nabla Q\|_{L^2_x}^{\frac{n+2}{n-2}}\Big) \leq \gamma. 
\]

Set $X=\sqrt{T} \|\phi\|_{L^{0,1}_2}$. Using the identities in \eqref{nabla-H-Q}, we deduce the inequality
$$ 
\qquad a X^2 +b X -c \leq 0, \quad
\mbox{where} \quad
a=\tfrac{n}{2\delta} C_n^n, ~~b=\tfrac{6n}{\delta} C_n^{\frac{n}{2}}, ~~c=\gamma - \tfrac{\beta}{\delta}.
$$
The discriminant of the second degree polynomial on the left is $D=b^2+4ac$ and the inequality is thus satisfied if $X\in [0, X_2(\delta,\gamma))$, where
$X_2(\delta,\gamma)=\frac{1}{2a} \big( \sqrt{b^2+4ac}-b \big)$, that is, 
$$ 
X_2(\delta,\gamma)= 6 C_n^{-\frac{n}{2}} \big( \sqrt{1+(\gamma\delta - \beta)/(18n)}-1\big).
$$ 
Let $\delta \to 1$ and $\gamma \to 1$. Since $X=\sqrt{T} \|\phi\|_{L^{0,1}_2}$, we deduce the value of $T^*$ in \eqref{T*-gere}. 
 
To obtain a lower bound on $\EE\big(\tau(u_0)\big)$, we deduce an estimate for $\EE(\tau_\delta)$ and may suppose that $\tau_\delta <\infty$ a.s. By a.s. continuity of $H(u(\cdot))$, we deduce $\EE\big( \sup_{s\leq \tau_\delta} H(u(s))\big) = \delta H(Q)$. Plugging this in \eqref{E_sup_H_add} and using the Cauchy-Schwarz inequality, we deduce 
\begin{align*}
\delta H(Q) = &\; \beta H(Q) + \frac{1}{2} \|\phi\|_{l^{0,1}_2}^2 \EE(\tau_\delta) + 3 \|\phi\|_{L_2^{0,1}} \sqrt{\EE(\tau_\delta)} \big( \|\nabla Q\|_{L^2_x} + 
 C_n^{\frac{2n}{n-2}}  \|\nabla Q\|_{L^2_x}^{\frac{n+2}{n-2}} \big).
\end{align*}
Let $Z=\|\phi\|_{L_2^{0,1}} \sqrt{\EE(\tau_\delta)}$. Then for $a,b$ as above and $\tilde{c}=1 - \frac{\beta}{\delta}$, we solve $aZ^2+bZ-\tilde{c}=0$. This inequality is satisfied if $Z\geq Z_2(\delta)$, where $Z_2(\delta) = \frac{b+\sqrt{b^2+4a\tilde{c}}}{2a}$. As $\delta \to 1$
we deduce that $\EE\big(\tau(u_0)\big) \geq T^*$.
\end{proof}

\subsection{Multiplicative noise}
In this section we suppose that $W$ is an $\RR$-valued Brownian motion 
$$
W(t,\cdot,\omega)=\sum_{k\geq 0} \phi e_k(\cdot) \beta_k(t,\omega),
$$
where the processes $\{ \beta_k\}_k$ are i.i.d one dimensional
Brownian motions, $\{e_k\}_k$ is an orthonormal basis of $L^2(\RR^n;\RR)$ and $\phi$ is an operator on $L^2(\RR^n;\RR)$. 

We suppose that the operator $\phi$ is Radonifying (notice the difference with the requirement in the additive noise \eqref{E:Radon}, since here we impose that the noise is {\it real-valued}), namely, 
\begin{equation}\label{E:Radon-m}
\qquad \phi \in  R\big(L^2_{\RR}(\RR^n), W_{\RR}^{1,\kappa} (\RR^n)\big), ~~\kappa = \frac{n(n+2)}{n-2}.
\end{equation}

The argument used to prove in \cite[Proposition 4.4]{deB_Deb_H1} the mass conservation in the $H^1$-subcritical multiplicative case can clearly be used to deduce that the mass of the local 
solution is a.s. constant on the local existence time interval $[0, \tau(u_0))$. 
Furthermore, the arguments used to prove \cite[Proposition 4.5]{deB_Deb_H1} imply that the identity satisfied by $H(u(t))$ for $t<\tau(u_0)$ is valid in  our energy-critical
framework.

We first consider the equation \eqref{NLS_Ito_critical} and provide information about the maximal existence time and probability  in the  $H^1$-critical case  in a random setting 
similar to that proven  in \cite[Theorem~2.8]{MilRou} for the intercritical cases.
Besides being Radonifying, we also assume that $\phi$ satisfies  
\begin{equation} \label{M_phi}
    M_\phi : = \|f^1_\phi\|_{L^\infty_{\RR}(\RR^n)} = 
    \sup_{x\in \RR^n} \sum_{k\geq 0} |\nabla (\phi _k)(x)|^2 <\infty. 
\end{equation}
Since the driving noise is real-valued, we know that the mass is a.s. conserved. Furthermore (see, e.g., \cite[Lemma~3.4]{MilRou} with $\alpha =0$), we have 
for any stopping time {$\tau \in (0, \tau(u_0))$.}
\begin{align} \label{E_max_H_multi}
\EE\Big( \sup_{s\leq \tau} H(u(s))\Big) \leq E\big(H(u_0)\big) + \frac12{M_\phi}  \EE(M(u_0) \tau) + 3 \sqrt{M_\phi}
\EE\Big( \sqrt{\tau M(u_0)}  \sup_{s\leq \tau } \|\nabla u(s)\|_{L^2_x} \Big).
\end{align}
\begin{Th}\label{T*-multi-genIC}
Let $u_0$ be an $H^1(\RR^n)$-valued random variable   such that $\EE(M(u_0)) <\infty$, $\|\nabla u_0\|_{L^2_x} <  \|\nabla Q\|_{L^2_x}$ a.s. and 
$H(u_0)\leq \beta H(Q)$ a.s. for some
constant $\beta \in  (0,1)$. 
Let $\phi$ satisfy \eqref{E:Radon-m} and \eqref{M_phi}.
Let $u(t)$ be the solution to \eqref{NLS_Ito_critical} on the random time interval $[0, \tau(u_0))$.

Then 
$P(\tau(u_0)>T) >0$ for every $T<T^*$, where
\begin{equation}\label{T*-gene}
T^* = \frac{ 9 }{C_n^n \, M_{\phi} \, \EE(M(u_0))} \Big( \big(1+\tfrac{2}{9n}(1 - \beta)\big)^{\frac12} -1\Big)^2 .
\end{equation}

Furthermore,  
\begin{equation}\label{E:Etau}
\EE\big(\tau(u_0)  M(u_0) \big) \geq 
\frac{ 9 }{C_n^n \,  M_\phi } \Big( \big(1+\tfrac{2}{9n}(1 - \beta)\big)^{\frac12} -1\Big)^2.
\end{equation}

If $u_0$ is deterministic, then $T_{det}^* = \frac{ 9 }{C_n^n \, M_{\phi} \, M(u_0)} \big( \big(1+\tfrac{2}{9n}(1 - \beta)\big)^{\frac12} -1\big)^2$ and \eqref{E:Etau} 
can be reformulated as $\EE( \tau(u_0)) \geq T_{det}^*$. 
\end{Th}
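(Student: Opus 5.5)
The plan is to follow the additive-noise proof of Theorem \ref{T*-add-genIC} step by step. The only change is that the energy bound \eqref{E_sup_H_add} of Lemma \ref{lem_Hadd} is replaced by the multiplicative bound \eqref{E_max_H_multi}; mass conservation makes this bound linear in $\|\nabla u\|_{L^2_x}$, with no $\frac{n+2}{n-2}$ power. Fix $\delta\in(\beta,1)$ and set $\tau_\delta=\inf\{t\geq 0: H(u(t))\geq \delta H(Q)\}\wedge\tau(u_0)$, where $\tau(u_0)$ comes from Theorem \ref{th_lwp_mul}. By the trapping property \eqref{trapping} we have $\|\nabla u(s)\|_{L^2_x}\leq \|\nabla Q\|_{L^2_x}=C_n^{-n/2}$ for $s\leq\tau_\delta$. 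We also use $H(Q)=n^{-1}C_n^{-n}$ from \eqref{nabla-H-Q}.

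For the probability statement, note that $\{\tau_\delta\leq T\}\subset\{\sup_{s\leq T\wedge\tau_\delta}H(u(s))\geq\delta H(Q)\}$, up to the event where $\tau(u_0)\leq T$ is reached first, which is absorbed in the same way as in the additive case. Apply Markov's inequality and \eqref{E_max_H_multi} with the stopping time $T\wedge\tau_\delta$. The trapping bound and Jensen's inequality $\EE\sqrt{M(u_0)}\leq\sqrt{\EE M(u_0)}$ then give
\[ P(\tau_\delta\leq T)\leq \frac{\beta}{\delta}+\frac{M_\phi \EE(M(u_0))T}{2\delta H(Q)}+\frac{3\sqrt{M_\phi T\,\EE(M(u_0))}\,C_n^{-n/2}}{\delta H(Q)}. \]
Fix $\gamma\in(\beta/\delta,1)$ and set $X=\sqrt{T M_\phi\EE(M(u_0))}$. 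Requiring the right-hand side to be at most $\gamma$ becomes $aX^2+bX-c\leq 0$, with $a=\frac{n}{2\delta}C_n^n$, $b=\frac{3n}{\delta}C_n^{n/2}$ and $c=\gamma-\beta/\delta$. The positive root is
\[ X_2=\frac{b}{2a}\Big(\sqrt{1+4ac/b^2}-1\Big)=3C_n^{-n/2}\Big(\sqrt{1+\tfrac{2(\gamma\delta-\beta)}{9n}}-1\Big). \]
Hence $P(\tau(u_0)>T)\geq P(\tau_\delta>T)\geq 1-\gamma>0$ whenever $X<X_2$. Letting $\delta,\gamma\to1$ gives exactly the threshold $T^*$ in \eqref{T*-gene}.

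For the expectation bound \eqref{E:Etau}, we may assume $\tau_\delta<\infty$ a.s. (otherwise the claim is trivial). Continuity of $H(u(\cdot))$ gives $\EE\sup_{s\leq\tau_\delta}H(u(s))=\delta H(Q)$. Plug this into \eqref{E_max_H_multi} with $\tau=\tau_\delta$ and use Cauchy--Schwarz in the form $\EE\sqrt{\tau_\delta M(u_0)}\leq\sqrt{\EE(\tau_\delta M(u_0))}$. With $Z=\sqrt{M_\phi\EE(\tau_\delta M(u_0))}$ this yields $aZ^2+bZ-(1-\beta/\delta)\geq0$, so $Z$ is at least the positive root. Since $\tau_\delta\leq\tau(u_0)$, letting $\delta\to1$ gives \eqref{E:Etau}. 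When $u_0$ is deterministic, $M(u_0)$ is a constant, so it can be taken out of both the probability and the expectation bounds, which gives $T^*_{det}$ and $\EE(\tau(u_0))\geq T^*_{det}$.

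The delicate point is the technical justification rather than the algebra. One must check that \eqref{E_max_H_multi} holds at $T\wedge\tau_\delta$, which is a stopping time that may equal $\tau(u_0)$, so it has to be approximated by stopping times strictly below $\tau(u_0)$ followed by a monotone limit. One must also check that trapping \eqref{trapping} applies, which needs a.s. continuity of $\|\nabla u\|_{L^2_x}$ in $H^1$ (Theorem \ref{th_lwp_mul}) together with mass conservation for real-valued noise.
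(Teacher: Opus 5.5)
Your proposal follows the paper's proof essentially line by line. It uses the same stopping time $\tau_\delta$, Markov's inequality with \eqref{E_max_H_multi}, the trapping bound \eqref{trapping} and $\EE\sqrt{M(u_0)}\le\sqrt{\EE M(u_0)}$. It sets up the same quadratic with $a=\frac{n}{2\delta}C_n^n$ and $b=\frac{3n}{\delta}C_n^{n/2}$, finds the same positive root, and uses the same limiting argument for \eqref{E:Etau}. Your constants reproduce \eqref{T*-gene} and \eqref{E:Etau} exactly.

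The step you set aside is a genuine gap, and ``absorbed in the same way as in the additive case'' has nothing behind it. The proof of Theorem \ref{T*-add-genIC}, like the paper's proof of this theorem, simply writes $P(\tau_\delta\le T)=P\big(\sup_{s\le T\wedge\tau_\delta}H(u(s))\ge\delta H(Q)\big)$. The trouble is the event $E_T=\{\tau(u_0)\le T,\ \sup_{s<\tau(u_0)}H(u(s))<\delta H(Q)\}$. On $E_T$ we have $\tau_\delta=\tau(u_0)\le T$, but the energy estimate gives no information, so Markov's inequality only yields $P(\tau_\delta\le T)\le\gamma+P(E_T)$.

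In an $H^1$-subcritical setting one disposes of $E_T$ through the $H^1$ blow-up alternative, since trapping and mass conservation keep $\|u(s)\|_{H^1_x}$ bounded. In the critical local theory of Theorem \ref{th_lwp_mul}, by contrast, the existence time depends on the profile of $u_0$ (through the smallness of $\|S(\cdot)u_0\|_{{\mathcal S}[\tau]}$), not only on $\|u_0\|_{H^1_x}$. So a bound $\|\nabla u(s)\|_{L^2_x}\le\|\nabla Q\|_{L^2_x}$ up to $\tau(u_0)$ does not rule out $\tau(u_0)\le T$. Showing that it does is precisely a stochastic version of the global existence part of Theorem \ref{T:main-deter}, which in the deterministic case rests on concentration-compactness.

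The same issue affects your expectation bound. The identity $\EE\sup_{s\le\tau_\delta}H(u(s))=\delta H(Q)$ requires the energy to actually reach $\delta H(Q)$ at $\tau_\delta$, i.e.\ $\tau_\delta<\tau(u_0)$ a.s., not merely $\tau_\delta<\infty$. As written, your argument (and the paper's) shows only that with probability at least $1-\gamma$ the energy stays below $\delta H(Q)$ on $[0,T\wedge\tau(u_0))$. To conclude $P(\tau(u_0)>T)>0$ and \eqref{E:Etau}, you need an additional ingredient guaranteeing $P(E_T)=0$.
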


\begin{proof}
Let $\delta \in (\beta,1)$. Recalling the stopping time $\tau(u_0)$ from the local theory (Theorem \ref{th_lwp_mul}), set 
\[ \tau_{\delta} := \inf\{ s : H(u(s)) \geq \delta H(Q)\} \wedge \tau(u_0) \quad \mbox{\rm and}\quad  X(\delta) := \sup_{s\in [0, \tau_\delta)}
 \|\nabla u(s)\|_{L^2_x}^2.\]
By assumption, $X_0 = \|\nabla u_0\|_{L^2(\RR^n)}^2 < x_c$. 
Fix $T>0$ and  consider $\phi$ such that $M_\phi$ ensures that $P(\tau_\delta > T)>0$. 
The Markov inequality, the estimate \eqref{E_max_H_multi}, the assumption on the noise \eqref{M_phi}, and the trapping property \eqref{trapping}, which
is a consequence of the bounds imposed on the initial condition, and also the Cauchy-Schwarz inequality imply
\begin{align}       
\label{upper_P}
P(\tau_\delta \leq T) = &P\Big( \sup_{s\leq \tau_\delta \wedge T} H(u(s)) \geq \delta H(Q)\Big) \nonumber \\
&\leq \frac{1}{\delta H(Q)} \Big[ \EE\big( H(u_0) \big) + \frac12{M_\phi} \EE\big( M(u_0)\big)  T + 3\sqrt{M_\phi  T } \, 
 \|\nabla Q\|_{L^2_x}  \EE\big( \sqrt{M(u_0)} \,\big) \Big] \nonumber \\
 &\leq \frac{\beta}{\delta} + \frac{1}{2} \frac{M_\phi \EE\big(M(u_0) \big) T}{\delta H(Q)} +  3\sqrt{M_\phi  \EE\big(M(u_0) \big) T }\,  
 \frac{\|\nabla Q\|_{L^2_x}}{\delta H(Q)}.
\end{align}
The right-hand side of \eqref{upper_P} is quadratic in $Y=\sqrt{M_\phi \EE\big(M(u_0) \big)T}$. Given $\gamma \in (\frac{\beta}{\delta}, 1)$, we want to find
$\epsilon(\gamma)>0$ such that for $Y\leq \epsilon(\gamma)$ we have $P(\tau_\delta \leq T) \leq \gamma$. 
Then, as $\gamma\to 1$ and $\delta \to 1$, we will deduce the value of $T^*$ such that the solution exists 
on the time interval $[0,T^*]$ with positive probability by Theorem \ref{th_lwp_mul}. 
Set $a= \frac{n}{2\delta }  C_n^n$, $b=\frac{3n}{\delta} C_n^{\frac{n}{2}} $ and $c=\gamma- \frac{\beta}{ \delta } >0$. Then \eqref{upper_P} becomes $aY^2+bY-c\leq 0$. 
The discriminant of the second degree polynomial on the left-hand side is $D= {C_n^n}{\delta^{-2}} \big( 9n^2 + 2n  (\gamma \delta -\beta)\big)$
 and the roots are $Y_1=\frac{-b-\sqrt{D}}{2a}$
and $Y_2=\frac{-b+\sqrt{D}}{2a}$. For $Y\geq 0$ we deduce that $aY^2+bY-c \leq 0$ if and only if $Y\in [0, Y_2]$, where 
\[ Y_2 = 3 C_n^{-\frac{n}{2}} \big( \sqrt{1+ 2(\gamma \delta- \beta)/(9n)}-1\big),
\]
which implies (as $\gamma \to 1$ and $\delta \to 1$) that $P(T<\tau^*(u_0)) >0$ for
\[ T<T^*=  \frac{9 n^2}{ C_n^n  M_\phi M(u_0)}   \big[ \sqrt{1 + 2 (1 - \beta)/(9n)} - 1\big]^2, 
\]
proving \eqref{T*-gene}. Note that $T^*$ is a decreasing  function of the intensity $M_\phi$ of the forcing noise, of the parameter $\beta$ and of the mass $M(u_0)$ of the initial
condition. 
\smallskip

The proof of the lower bound of $\EE(\tau(u_0)) $ is similar. Indeed, if $\tau_\delta < \infty $ a.s., we have
\begin{align*}
 \delta H(Q) \leq \beta H(Q) + \frac1{2}{M_\phi} \EE \big( M(u_0) \tau_\delta\big) 
 + 3 \sqrt{M_\phi} \|\nabla Q\|_{L^2_x} \EE\big( \sqrt{M(u_0) \tau_\delta}\big).  
\end{align*}

Then, for any constant $C$ the lower bound $\EE(\tau(u_0) M(u_0) )\geq C$ is obvious if $P(\tau_\delta <\infty)<1$. 
Indeed, in that case $\EE(\tau_\delta)=+\infty$, so that 
$\EE(\tau(u_0)M(u_0))\geq \EE(\tau_\delta M(u_0)\big)= +\infty$.
Since $\tau(u_0) \geq \tau_\delta$, we deduce a lower bound for $\EE(\tau(u_0) M(u_0))$.

We next  suppose that $\tau_\delta < \infty$ a.s. 
By a.s. continuity of $H(u(\cdot))$ we deduce that $\EE\big(\sup_{s\leq \tau_\delta} H(u(s)) \big) = \delta H(Q)$. Plugging this in \eqref{E_max_H_multi}
and using the Cauchy-Schwarz inequality, we deduce 
\begin{align}
\delta H(Q) = &\beta H(Q) + \frac12{M_\phi} \, \EE\big( M(u_0) \tau_\delta \big) + 3 \sqrt{M_\phi} \, \EE\big(\sqrt{M(u_0) \tau_\delta }\big)  \| \nabla Q\|_{L^2_x}
\nonumber \\
\leq & \beta H(Q) + \frac12 {M_\phi \, \EE\big(M(u_0) \tau_\delta\big)} + 3 \sqrt{M_\phi  \, \EE\big(M(u_0)  \tau_\delta\big)} \| \nabla Q\|_{L^2_x}.
\end{align}
Set $Z=\sqrt{M_\phi\,\EE(M(u_0) \tau_\delta)}$. Then the above inequality can be written as $aZ^2+bZ-\tilde{c} \geq 0$ for $\tilde{c}=\delta - \beta >0$. 
Replacing $c$ by $\tilde{c}$ in the above computations, we deduce that $M_\phi \, \EE\big( M(u_0) \tau_\delta\big) \geq Z_2^2$,
where
\[ 
Z_2 = 3 C_n^{-\frac{n}{2}} \big( \sqrt{1+ 2( \delta- \beta)/(9n)}-1\big). 
\]
Since $\tau(u_0) \geq \tau_\delta$, for every $\delta <1$, as $\delta \to 1$, we deduce that 
$M_{\phi} \EE(\tau(u_0) M(u_0)) \geq 
9 C_n^{-n} \big( \sqrt{1+ 2(1- \beta)/(9n)}-1\big)
$, which implies \eqref{E:Etau}.

When $u_0$ is deterministic, we may divide the previous inequality by $M(u_0)$ as well as defining $T_{det}^*$ from $T^\ast$ in a similar manner, 
we deduce $\EE(\tau(u_0)) \geq T^*_{det}$, completing the proof.
\end{proof}

\section{Blow-up in finite time}\label{S:B}
In this section we show that solutions to \eqref{NLS-additive} and \eqref{NLS_Ito_critical}  blow up before some time $T>0$ with positive probability in both additive and multiplicative  cases, 
given that the initial condition has its energy positive but ``small" (less than the energy of the ground state) and sufficiently large $L^2$-norm of the gradient 
(as in the deterministic case in \cite[page~656]{KenMer}). Choosing the noise intensity sufficiently small (which we specify in each case), we show that the probability that the stopping time $\tau(u_0)$ from the local theory being finite is positive, and as before we treat each type of noise separately. 
\smallskip

As in the deterministic case, the argument is based on the variance $V(u) $ defined by   
\begin{equation}\label{def_V}
V(u(t))=\int_{\RR^n} |x|^2 |u(t,x)|^2 dx 
\quad \mbox{\rm on the set} \quad \Sigma = \Big\{ u_0\in \dot{H}^1(\RR^n)\, : \int_{\RR^n}  |x|^2 |u_0(x)|^2 dx < \infty  \Big\}, 
\end{equation}
what is typically referred to as a {\it finite} variance. Since in the deterministic case the following identities hold (e.g., see \cite{DR2015})
$$
\frac{d}{dt} V(u(t)) = 4 {\rm Im} \int_{\RR^n} u(x) \, x\!\cdot\!\nabla \bar{u}(x) \, dx, \qquad \frac{d^2}{dt^2} V(u(t)) = \frac{16 n}{n-2} H(u) - \frac{16}{n-2}\| \nabla u (t)\|_{L^2_x}^2, 
$$
an easy convexity argument on the variance shows that blow-up occurs at some finite time if the energy is negative (and one can give an upper bound for it). 
In the stochastic setting this is much more involved in the $L^2$-critical and supercritical cases, as it was first shown by {de Bouard - Debussche in \cite{ deB_Deb_AnnProb} for an
initial condition with negative energy in case of a multiplicative noise.}  In \cite{MilRou} we showed how to incorporate the variance and its time derivatives in the case of positive energy 
(and other conditions) in order to show that blow-up occurs in finite time with positive probability. 
Here, we develop a criterion for blow-up in finite time (with positive probability) in the energy-critical case with positive energy (under the soliton energy threshold and sufficiently large enough $\dot{H}^1$ norm). 

For $u \in \Sigma$ we denote 
\begin{equation}    \label{def_G}
G(u(t)) = {\rm Im}\int_{\RR^n} u(t,x) \, x\!\cdot\!\nabla \bar{u}(t, x) \, dx, .
\end{equation}
and first derive time dependence formulas for $G(u(t))$ and $V(u(t))$. After that, we adapt the convexity argument to our stochastic settings.

As in previous sections, we study the case of an additive stochastic perturbation in \S \ref{S:A-bup}, and then of the multiplicative one  
in \S \ref{S:M-bup}.

\subsection{Additive stochastic perturbation} \label{S:A-bup}
We consider solutions to \eqref{NLS-additive} 
with an additive stochastic perturbation and make the following assumption on the {\it complex-valued} noise. 
\medskip

\noindent$\bullet$  \underline{\bf Condition (H)} ({\it Restrictions on the additive noise.}) 
\begin{itemize}
\item[(i)] 
The operator $\phi$ is Hilbert-Schmidt from $L^2(\RR^n)$ to $H^1(\RR^n)$, i.e., $\phi \in L^{0,1}_2$, and also from $L^2(\RR^n)$ to $\Sigma$, i.e., 
\begin{equation}\label{C-phi-sigma}
{C_\phi^\Sigma := \sum_{k\geq 0}  
\int_{\mathbb R^n} |x \phi e_k(x) |^2 \, dx < \infty.}
\end{equation} 
\item[(ii)] 
The operator $\phi$ is $\gamma$-Radonifying from $L^2(\RR^n)$ to $L^{\frac{2n}{n-2} }(\RR^n)$, that is, 
\begin{equation}\label{C-phi}
C(\phi):=\sum_{k\geq 0}  \|\phi e_k\|_{L^{\frac{2n}{n-2}}_x}^2<\infty.
\end{equation} 
\end{itemize}

We will also use the constants $C^{(1)}_\phi$ and $C^{(2)}_\phi$ defined by 
$$
C^{(1)}_\phi: = \sum_{k\geq 0} \|\nabla(\phi e_k)\|_{L^2_x}^2 
\quad \mbox{\rm and} \quad C^{(2)}_\phi: = {\rm Im }\sum_{k\geq 0}  \int_{\RR^n} \phi e_k(x)\  x\!\cdot\!\nabla(\overline{\phi e_k})(x) dx.
$$
Note that $C^{(1)}_\phi \leq \|\phi\|^2_{L^{0,1}_2}$, and that the Cauchy-Schwarz inequality implies that under 
the  condition {{\bf (H)}} on $\phi$, we have 
$$
C^{(2)}_\phi \leq \sqrt{ C^{(1)}_\phi C^\Sigma_\phi} \leq  \|\phi\|_{L^{0,1}_2} \sqrt{ C^\Sigma_\phi}.
$$

The following lemma is a rewriting  of 
\cite[Lemma~2.2]{deB_Deb_PTRF} adapted to our setting (see also \cite[Lemma~5.6]{MilRou} for the proof in the case $\sigma=\frac{2}{n-2}$). 
\begin{lemma}   \label{lem_v-G-add}
Let $u$ be the solution to \eqref{NLS-additive} satisfying the local theory from Theorem \ref{th_lwp-add} with the stopping time $\tau(u_0)$ and assume that $u_0\in \Sigma$ a.s.  
Let $\phi$ satisfy the condition {\bf (H)}.
Then for any stopping time $\tau<\tau(u_0)$ a.s., we have a.s. 
\begin{equation}    \label{V(u)-add-1}
V(u(\tau)) = V(u_0) +4 \int_0^\tau G(u(s)) ds +2 {\rm Im} \int_0^\tau \sum_{k\geq 0} \int_{\RR^n} |x|^2 \overline{{u}(s,x)} \phi e_k(x) dx d\beta_k(s) + \tau C_\phi^\Sigma, 
\end{equation}
and
\begin{align} \label{G(u)-add}
G(u(\tau)) = & G(u_0) + \frac{4n}{n-2} \int_0^\tau H(u(s)) ds - \frac{4}{n-2}  \int_0^\tau \|\nabla u(s)\|_{L^2_x}^2 ds \nonumber \\
&\qquad + {\rm Re}\int_0^\tau  \sum_{l\geq 0}  \int_{\RR^n} \overline{u(s,x)} \big[ 2\, x\!\cdot\!\nabla(\phi e_l)(x) +n \phi e_l(x)\big] dx d\beta_l(s) 
+ \tau C_\phi^{(2)}. 
\end{align}
\end{lemma}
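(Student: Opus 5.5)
The plan is to apply the It\^o formula to the functionals $V$ and $G$ along the solution $u$. Since $u$ is only an $\dot H^1$ mild solution, this cannot be done directly, so we first regularize and then pass to the limit. Concretely, we will replace $u_0$ and $\phi$ by smooth approximations, e.g. $u_0^\varepsilon = (1-\varepsilon\Delta)^{-m}u_0$ and $\phi_\varepsilon=(1-\varepsilon\Delta)^{-m}\phi$, and truncate the nonlinearity as in the proof of Theorem \ref{th_lwp_mul}. This gives strong solutions $u^\varepsilon$ with enough regularity and decay that $V(u^\varepsilon)$ and $G(u^\varepsilon)$ are semimartingales. To handle the weight $|x|^2$, we will also use $V_\eta(u)=\int e^{-\eta|x|^2}|x|^2|u|^2dx$, following \cite[Lemma~2.2]{deB_Deb_PTRF} and \cite[Lemma~5.6]{MilRou}.

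For \eqref{V(u)-add-1}, write $du=-i(\Delta u+\lambda|u|^{2\sigma}u)dt - i\,dW$. The It\^o formula applied to $|x|^2|u|^2$ has three contributions. The nonlinear drift drops out, since $\mathrm{Re}(\bar u\cdot(-i)|u|^{2\sigma}u)=0$. The Laplacian drift, after integrating by parts, gives $4\,\mathrm{Im}\int \bar u\, x\cdot\nabla u$ with the sign convention of \eqref{def_G}, i.e.\ $4G(u)$. The martingale term is $2\,\mathrm{Re}\int|x|^2\bar u(-i\phi e_k)\,d\beta_k=2\,\mathrm{Im}\int |x|^2\bar u\,\phi e_k\,d\beta_k$. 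The It\^o correction is $\sum_k\int|x|^2|\phi e_k|^2=C^\Sigma_\phi$ per unit time, which is finite by \eqref{C-phi-sigma}.

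For \eqref{G(u)-add}, apply It\^o to the quadratic form $G$. The drift is the deterministic virial computation: it produces $2\|\nabla u\|^2 - \frac{2n\sigma}{2\sigma+2}\lambda\|u\|^{2\sigma+2}_{2\sigma+2}$ up to normalisation, which we rewrite with \eqref{H} and $\sigma=\frac{2}{n-2}$ as $\frac{4n}{n-2}H-\frac{4}{n-2}\|\nabla u\|^2$. The martingale part comes from the bilinear derivative $DG(u)h=\mathrm{Im}\int (h\,x\cdot\nabla\bar u+u\,x\cdot\nabla\bar h)$. Taking $h=-i\phi e_l$ and integrating by parts in the second term gives $\mathrm{Re}\int\bar u[2x\cdot\nabla(\phi e_l)+n\phi e_l]$, up to conjugation and sign bookkeeping. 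The second-order term is $\frac12 D^2G(\phi e_k,\phi e_k)$ summed over $k$, which equals $\mathrm{Im}\sum_k\int\phi e_k\,x\cdot\nabla\overline{\phi e_k}=C^{(2)}_\phi$. All these quantities are finite under condition (H), using $C^{(2)}_\phi\le\|\phi\|_{L^{0,1}_2}\sqrt{C^\Sigma_\phi}$.

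The main obstacle is passing $\varepsilon,\eta\to0$ in the energy-critical setting, where convergence $u^\varepsilon\to u$ is only available in the Strichartz spaces $\mathcal S$, $\mathcal W$ and in $C([0,\tau];\dot H^1)$, and only up to stopping times. We will localize with $\tau_R=\inf\{t:\|\nabla u(t)\|_{L^2}+\|u\|_{\mathcal S[t]}\ge R\}\wedge\tau$. On $[0,\tau_R]$ the continuous dependence given by the contraction argument of Theorem \ref{th_lwp-add} yields convergence of the drift terms, including $H(u^\varepsilon)\to H(u)$ via \eqref{E:Sobolev}. The stochastic integrals converge by the It\^o isometry and \eqref{C-phi}.

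Finiteness of $V(u(t))$ (propagation of $\Sigma$) must be obtained along the way. We will get it from the $V_\eta$ identity with a Gronwall bound, using $|G_\eta(u)|\le V_\eta^{1/2}\|\nabla u\|_{L^2}$ plus lower-order terms, and then let $\eta\to0$ by monotone convergence. Finally, letting $R\to\infty$ gives the identities up to any $\tau<\tau(u_0)$.
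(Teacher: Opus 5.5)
Your proposal is correct and follows essentially the route the paper takes. The paper gives no separate argument; it refers to \cite[Lemma~2.2]{deB_Deb_PTRF} and \cite[Lemma~5.6]{MilRou}, i.e.\ It\^o's formula for regularized solutions with the truncated weight $e^{-\eta|x|^2}|x|^2$ followed by a limit passage, and your drift, martingale and It\^o-correction terms reproduce \eqref{V(u)-add-1}--\eqref{G(u)-add}. Two harmless slips: the Laplacian contribution to $dV$ is $-4\,\mathrm{Im}\int \bar u\,x\cdot\nabla u\,dx = 4G(u)$ (you wrote it with a plus sign), and the form $\mathrm{Re}\int \bar u\,[2x\cdot\nabla(\phi e_l)+n\phi e_l]\,dx$ comes from integrating by parts the term $h\,x\cdot\nabla\bar u$, not $u\,x\cdot\nabla\bar h$.
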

\medskip

Let $u_0, \sigma$ and $\phi$ be as in Lemma~\ref{lem_v-G-add}.
We next state the time dependence of energy.
Recalling the energy $H(u(\tau))$ from \cite[Proposition 3.3]{deB_Deb_H1},  we deduce for any stopping time $\tau < \tau(u_0)$ a.s.  
\begin{align}\label{H_add}
H\big( u(\tau)\big) &= H(u_0) + \tfrac{1}{2}  C^{(1)}_\phi \tau  
- \mbox{\rm Im } \Big( \int_0^\tau \!\!\int_{\RR^n} \Big[    \Delta \overline{u(s,x)}
- | u(s,x)|^{\frac{4}{n-2}} \overline{u(s,x)}  \Big] \,dx dW(s) \Big) 
\nonumber \\
& \;  +\frac{1}{2}  \sum_{k\geq 0} \int_0^\tau \!\!\int_{\RR^n} \Big[ | u(s,x)|^{\frac{4}{n-2}} |\phi e_k(x)|^2 + \frac{4}{n-2} {|u(s,x)|^{\frac{2(4-n)}{n-2}} } 
\Big( \mbox{\rm Re }\big(\, \overline{u(s,x)}
\phi e_k(x) \big)\Big)^2 
\Big] 
dx ds. 
\end{align}

Then plugging \eqref{H_add} and \eqref{G(u)-add} into \eqref{V(u)-add-1}, we deduce 
that for any stopping time $\tau<\tau(u_0)$ a.s., we have  
\begin{align}		\label{V(u)-add-2}
V\big( u(\tau)\big) = & \; V(u_0) + 4\tau G(u_0) + \frac{8n}{n-2}  H(u_0) \tau^2 - \frac{16}{n-2}  \int_0^\tau ds \int_0^s \| \nabla u(r)\|_{L^2_x}^2 dr \nonumber \\
& \; + 2 C_\phi^{(2)} \tau^2 + \frac{4}{3} n\sigma  C^{(1)}_\phi  \tau^3 +  C_\phi^{\Sigma} \tau + \sum_{j=1}^4 T_j(\tau)\quad \mbox{\rm a.s.},
\end{align}
where
\begin{align*}
T_1(\tau) = 
&\;   \frac{16 n}{n-2} \! \! \int_0^\tau \!\! ds\!\!  \int_0^s \!\! dr \!\! \int_0^r \! \sum_{k\geq 0} {\rm Im} \int_{\RR^n} \!\! 
 \big[ \nabla \overline{{u}(r_1,x)} \nabla (\phi e_k)(x) \\
 &\qquad \qquad \qquad \qquad + |u(r_1,x)|^{\frac{4}{n-2} } \overline{{u}(r_1,x)} \phi e_k(x) \big] dx d\beta_k(r_1) , \\
T_2(\tau) =&\; -  \frac{8n}{n-2}   \!  \int_0^\tau \!\! ds\!  \int_0^s \!\! dr \! \int_0^r \!\! dr_1 \sum_{k\geq 0} \int_{\RR^n} \big[ 
|u(r_1,x)|^{\frac{4}{n-2}} |\phi e_k(x)|^2  \\
&\qquad \qquad \qquad \qquad  
+ \frac{4}{n-2}  |u(r_1,x)|^{\frac{2(4-n)}{n-2} } \big( {\rm Re} (\overline{{u}(r_1,x)} \phi e_k(x)) \big)^2\big]  dx , \\
T_3(\tau) =&\; 4 {\rm Re} \int_0^\tau ds \int_0^s \sum_{k\geq 0}  \int_{\RR^n} \overline{{u}(r,x)}   \big[  2 \,x\!\cdot\! \nabla(\phi e_k)(x) +n \phi e_k(x)  \big] \, dx d\beta_k(r),  \\
T_4(\tau) =&\; 2 {\rm Im} \int_0^\tau \sum_{k\geq 0}  \int_{\RR^n} |x|^2 \overline{{u}(s,x) }\phi e_k(x) \, dx d\beta_k(s). 
\end{align*} 

The following statement describes a sufficient condition on the initial data and on some deterministic positive time, by which blow-up occurs with positive probability. 

\begin{theorem}\label{th_blowup-add}
Let $u_0\in H^1_x\cap \Sigma$ a.s. be ${\mathcal F}_0$-measurable such that $\EE(H(u_0))<\infty$, $\EE(M(u_0)^2)<\infty$, $\EE(V(u_0))<\infty$, and $\EE(G(u_0)^2)<\infty$.  
Let $\phi$ satisfy the condition {\bf(H)}.
Suppose  that for some positive constants $\beta_0$ and $\delta_0$ such that $\beta_0 < 1 < \delta_0$, we have 
\begin{equation} \label{cond-u_0-Q-add}
H(u_0) \leq  \beta_0 H(Q) (\equiv \beta_0 {n^{-1} C_n^{-n}}) \; \mbox{\rm a.s.}  \quad \mbox{\rm  and}\quad  \| \nabla u_0\|_{L^2} 
\geq  \delta_0 \|\nabla Q\|_{L^2} (\equiv \delta_0 C_n^{- \frac{n}{2}}) \; \mbox{\rm a.s.}
\end{equation}
Then for 
$\|\phi\|_{L^{0,0}_2}, \|\phi\|_{L^{0,1}_2}, C(\phi)$, and $C^\Sigma_\phi$ 
small enough, we have $P\big(\tau(u_0) < \infty \big) >0$, where $\tau(u_0)$ is defined in Theorem \ref{th_lwp-add}.
\end{theorem}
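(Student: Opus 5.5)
We argue by contradiction. Suppose $P(\tau(u_0)<\infty)=0$, so that $\tau(u_0)=\infty$ a.s. We then show that for a suitable deterministic time $T$ the expectation $\EE(V(u(T\wedge\tau)))$ becomes negative, which is impossible because $V\geq 0$. The stopping time $\tau$ will be a localization along which all the stochastic integrals in \eqref{V(u)-add-2} are genuine martingales.

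\textbf{Step 1: energy trapping in the blow-up region.} In the deterministic setting, $H(u)<H(Q)$ together with $\|\nabla u\|_{L^2}>\|\nabla Q\|_{L^2}$ yields a quantitative bound (cf.\ \cite{KenMer}): there is $\delta_1>1$, depending on $\beta_0$ and $\delta_0$, with
\[
\|\nabla u\|_{L^2}^2\geq \delta_1\|\nabla Q\|_{L^2}^2 \quad\text{and}\quad \frac{8n}{n-2}H(u)-\frac{16}{n-2}\|\nabla u\|^2_{L^2}\leq -c_0<0.
\]
We get this from the function $f$ in \eqref{def-f}. Since $f$ is decreasing on $[x_c,\infty)$, the inequality $f(\|\nabla u\|_{L^2}^2)\leq H(u)\leq \beta H(Q)$ forces $\|\nabla u\|_{L^2}^2\geq x_\beta>x_c$, where $x_\beta$ is the larger root of $f=\beta f(x_c)$. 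Moreover $\frac{n}{n-2}H(Q)=\frac{1}{n-2}x_c$.

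Fix $\beta_1\in(\beta_0,1)$ and define
\[
\tau_1=\inf\{t: H(u(t))\geq \beta_1 H(Q)\}\wedge\inf\{t:\|\nabla u(t)\|_{L^2}\geq R\}\wedge \inf\{t: |V|,|G|\geq R\}.
\]
By continuity of $H(u(\cdot))$ and of $\|\nabla u(\cdot)\|_{L^2}$, the gradient cannot cross $x_c$ before $\tau_1$. Hence on $[0,\tau_1]$ we have $\|\nabla u\|^2_{L^2}\geq x_{\beta_1}>x_c$, and therefore
\[
\frac{8n}{n-2}H(u_0)\,s-\frac{16}{n-2}\int_0^s\|\nabla u\|^2_{L^2}\,dr\leq -c_0\, s.
\]
This uses the a.s.\ bound $H(u_0)\leq\beta_0 H(Q)$ and the lower bound on the gradient. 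I have to be careful here, because $H(u(r))$ itself is not what enters \eqref{V(u)-add-2}: only $H(u_0)$ appears, the noise contributions being in $T_1,T_2$. So the correct combination is
\[
\frac{8n}{n-2}H(u_0)-\frac{16}{n-2}x_{\beta_1}\leq \frac{8n}{n-2}\beta_0H(Q)-\frac{16}{n-2}x_c<0 .
\]

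\textbf{Step 2: take expectations in \eqref{V(u)-add-2} at $\tau=T\wedge\tau_1$.} The stochastic integrals $T_3$, $T_4$ and the iterated integral $T_1$ have zero mean after localization: the integrands are controlled by $V$, $\|\nabla u\|$, $M(u_0)$ and $\|u\|_{L^{2n/(n-2)}}\leq C_n\|\nabla u\|_{L^2}$, and they are weighted by $C^\Sigma_\phi$, $\|\phi\|_{L^{0,1}_2}$ and $C(\phi)$. The term $T_2$ is nonpositive (it carries a minus sign in front of nonnegative terms), so it can be dropped. Note that with a stopping time the iterated integrals are not exactly martingales. I would rather bound their expectations by the Davis inequality, obtaining quantities of the form $\|\phi\|\,T^{3/2}$, $\|\phi\|\,T^{5/2}$, and so on, times moments of $V$, $M$, $G$ (this is where $\EE M(u_0)^2$ and $\EE G(u_0)^2$ enter). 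This gives
\[
\EE V(u(T\wedge\tau_1))\leq \EE V(u_0)+4\,\EE\big(G(u_0)\,(T\wedge\tau_1)\big)-c_0\,\EE\big((T\wedge\tau_1)^2\big)+\varepsilon(\phi)\,P(T),
\]
where $P$ is a polynomial and $\varepsilon(\phi)\to0$ as the noise norms tend to $0$.

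\textbf{Step 3: conclusion, which is the main obstacle.} We need to show that $P(\tau_1\geq T)$ is close to $1$ when the noise is small, so that $-c_0T^2$ dominates. For this we use Markov's inequality on $\sup_{s\leq T\wedge\tau_1}H(u(s))$ via Lemma \ref{lem_Hadd}: the bound there is $\beta_0H(Q)$ plus terms of size $\|\phi\|\,R\sqrt T$, where $R$ bounds the gradient. Removing the cutoff $R$ needs a priori moment bounds on $\|\nabla u\|$; I would obtain them from \eqref{H_add} together with the mass evolution, accepting a small-probability exceptional set. Then choose $T$ large enough that $\EE V(u_0)+4T\,\EE|G(u_0)|-\tfrac{c_0}{2}T^2<0$, and after that choose the noise small enough that both the $\varepsilon(\phi)P(T)$ term and $P(\tau_1<T)$ are negligible. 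This forces $\EE V<0$, a contradiction. Hence $P(\tau(u_0)<\infty)>0$.

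The delicate points are the order of choices ($T$ first, then the noise size) and controlling the event where $\tau_1<T$ without any blow-up, on which $V\geq0$ still gives nonnegative contributions. That is why I argue with $\EE\big(V\,1_{\{\tau_1\geq T\}}\big)$ and use $V\geq0$ on the complement.
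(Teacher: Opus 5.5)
Your Steps~1 and~2 match the paper's set-up. Both use trapping on the decreasing branch of $f$ from \eqref{def-f}, and the compensation $\frac{8n}{n-2}H(u_0)\tau^2-\frac{16}{n-2}\int_0^\tau\!\int_0^s\|\nabla u(r)\|_{L^2}^2\,dr\,ds\le -c\,\tau^2$ before the exit time. Both also use a gradient cutoff and It\^o-isometry/Davis bounds on the iterated integrals in \eqref{V(u)-add-2}. Your remark that $T_2\le 0$ and can be dropped is correct; the paper bounds $|\tilde T_2|$ instead.

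The genuine gap is in Step~3, exactly where you locate the difficulty. Neither \eqref{H_add} nor the mass evolution can give an upper moment bound on $\|\nabla u\|_{L^2}$. On the branch $\|\nabla u\|_{L^2}^2>x_c$, the inequality $f(\|\nabla u\|_{L^2}^2)\le H(u)$ bounds the gradient only from below. The mass cannot help, because $\|u\|_{L^{2n/(n-2)}}\le C_n\|\nabla u\|_{L^2}$ is scale invariant: for $\varphi\in C_c^\infty$, $u_\mu=\mu^{n/2}\varphi(\mu\,\cdot)$ has fixed mass, while $H(u_\mu)\to-\infty$ and $\|\nabla u_\mu\|_{L^2}\to\infty$ as $\mu\to\infty$.

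So the level $R$ of your gradient cutoff cannot be fixed from the data before the noise. The only thing that makes $P\big(\inf\{t:\|\nabla u(t)\|_{L^2}\ge R\}\le T\big)$ small is the contradiction hypothesis $\tau(u_0)=\infty$ for the given noise, which makes $R$ depend on $\phi$. Your remainder $\varepsilon(\phi)P(T)$ contains terms of size $R\sqrt{C^{(1)}_\phi}\,T^{5/2}$ and $R^{\frac{n+2}{n-2}}\sqrt{C(\phi)}\,T^{5/2}$. The Markov step via Lemma~\ref{lem_Hadd} contains $\|\phi\|_{L^{0,1}_2}R^{\frac{n+2}{n-2}}\sqrt{T}$. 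Once $R=R(\phi)$, shrinking the noise no longer makes these small, so your ordering ``first $T$, then the noise'' does not close.

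Separately, Markov applied to $\sup H$ only gives $P(\text{the energy reaches }\beta_1H(Q)\text{ before }T)\le\beta_0/\beta_1+\cdots$, which is not close to $0$. A positive lower bound on $P(\tau_1\ge T)$ would suffice for your final inequality, but only after the cutoff issue is resolved.

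The paper never needs $P(\tau\ge T)$ close to $1$; it obtains the largeness of $\EE\big((T\wedge\tau)^2\big)$ by a different mechanism. It splits according to whether $\EE(\tau_\delta^2)$ is infinite or finite. In the finite case it uses the energy balance:
\begin{itemize}
\item Since $\tilde\sigma_\gamma\le\tau_\delta<\infty$, one has $H(u(\tilde\sigma_\gamma))=\gamma H(Q)$.
\item One localizes so that the martingale parts $I_1,I_2$ of the energy are centred; $\sigma_0$, the first time $H(u)\le0$, provides positivity for dominated convergence.
\item This gives $\gamma H(Q)\,P(\tilde\sigma_\gamma\le\sigma_0\wedge T)\le\beta_0H(Q)+\frac12\|\phi\|_{L^{0,1}_2}^2\{\EE((\tau_\delta\wedge T)^2)\}^{1/2}$.
\item Combined with $P(\tilde\sigma_\gamma\le\sigma_0\wedge T)\ge 1-4\epsilon_0$, this yields $\EE((\tau_\delta\wedge T)^2)\gtrsim(\gamma(1-4\epsilon_0)-\beta_0)^2\|\phi\|_{L^{0,1}_2}^{-4}$.
\end{itemize}
Small noise thus enters by making escape from the trapped region slow, which is what lets the negative quadratic term dominate at the end.

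The paper also takes its cutoff levels $M_0$ and $\mathfrak L_0$ from the hypothesis $\tau(u_0)=\infty$ after the noise is fixed, and then requires the noise to be small relative to them. So the order of quantifiers is delicate there as well. In any case it cannot be settled by an energy--mass bound on the gradient, which is what your Step~3 relies on.
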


Before proving this theorem, we first show a result similar to \cite[Theorem~4.1]{deB_Deb_AnnProb}, however, in our case for positive Hamiltonian, $H(u_0) > 0$, 
and the energy-critical nonlinearity. 
\smallskip

\begin{Prop}\label{Prop-blow-add}
Let $u_0$ satisfy the assumptions of Theorem \ref{th_blowup-add}. 
Given $\epsilon>0$, $\mathfrak M>0$  and $t>0$, suppose that $\phi$  satisfies the following conditions:
\begin{align}
C_\phi^\Sigma + C_\phi^\Sigma t +  \frac{16 n}{3}  \|\phi\|_{L_2^{0,0}} \EE\big(M(u_0)\big)^{\frac{1}{2}}  t^{\frac{3}{2}}+ \big( 2 C_\phi^{(2)} + 32 C_\phi^{(1)} 
 + 4 \sqrt{38}n  \|\phi\|_{L^{0,0}_2}^2 \big)  t^2  +
    \frac{4}{3} n\sigma C_{\phi}^{(1)} t^3 &
   \leq \epsilon ,     \label{cond_1-add}
   \\
  \frac{64 n}{15(n-2) }   \mathfrak M  \sqrt{C_{\phi}^{(1)}}   t^{\frac{5}{2} }
  & \leq \epsilon,    \label{cond_2-add}  
   \\
  \frac{64 n}{15 (n-2) }  C_n^{\frac{n+2}{n-2}} \mathfrak M^{\frac{n+2}{n-2}}   
 \;\sqrt{C(\phi)}  t^{\frac{5}{2} } 
   & \leq \epsilon,  \label{cond_3-add}\\
 \frac{4n(n+2)}{3(n-2)^2}   C_n^{\frac{4}{n-2}} \mathfrak M^{\frac{4}{n-2}}  C(\phi)  t^3 
&  \leq \epsilon.  \label{cond_4-add} 
  \end{align}
For $\delta \in (1, \delta_0]$ and the given above $\mathfrak M>0$, set 
\begin{align}
    \tau_\delta~ &= \inf\{ s\geq 0 :   \|\nabla u(s)\|_{L^2}    \leq \delta \|\nabla Q\|_{L^2} \}\wedge \tau(u_0)    \label{def-tau_delta},\\
    ~\tilde{\tau}_{\mathfrak M}
& = \inf\{ t\geq 0 : \|\nabla u(s)\|_{L^2} \geq \mathfrak M \} \wedge \tau(u_0).
\label{def-tildetau}
\end{align}
Suppose that for some $t>0$ and the constants $\epsilon$, $\mathfrak M>0$  chosen above,  we have  for  $\tau = \tau_\delta \wedge \tilde{\tau}_{\mathfrak M}$
\begin{equation}    \label{cond-3-add-Bis}
  \EE(   V(u_0) )   +4\epsilon  +4  \big\{\EE\big(  G(u_0)^2\big)\big\}^{\frac{1}{2}} \big\{ \EE \big( (t\wedge \tau)^2 \big)\big\}^{\frac{1}{2}}  \nonumber \\
    - \frac{8}{n-2} C_n^{-n} (\delta^2-\beta_0) \EE\big( (t\wedge \tau)^2\big) <0. 
\end{equation}
Then $P(\tau(u_0)\leq t)>0$. 
\end{Prop}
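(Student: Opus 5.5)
We argue by contradiction: assume $P(\tau(u_0)\le t)=0$, i.e.\ $\tau(u_0)>t$ a.s. The starting point is the variance identity \eqref{V(u)-add-2}, evaluated at the stopping time $t\wedge\tau$ with $\tau=\tau_\delta\wedge\tilde\tau_{\mathfrak M}$. Then we take expectations. Since $V\ge 0$, it suffices to show that the expectation of the right-hand side is strictly negative, which contradicts \eqref{cond-3-add-Bis}.

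\textbf{The deterministic part.} On $[0,t\wedge\tau]$ we have $\|\nabla u(s)\|_{L^2}\ge\delta\|\nabla Q\|_{L^2}=\delta C_n^{-n/2}$ and $H(u_0)\le\beta_0 n^{-1}C_n^{-n}$. Hence
\[
\frac{8n}{n-2}H(u_0)(t\wedge\tau)^2-\frac{16}{n-2}\int_0^{t\wedge\tau}\!\!\int_0^s\|\nabla u(r)\|_{L^2}^2\,dr\,ds\le -\frac{8}{n-2}C_n^{-n}(\delta^2-\beta_0)(t\wedge\tau)^2 .
\]
The term $4(t\wedge\tau)G(u_0)$ is bounded in expectation by Cauchy--Schwarz, which gives the term $4\{\EE(G(u_0)^2)\}^{1/2}\{\EE((t\wedge\tau)^2)\}^{1/2}$. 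The purely noise-dependent terms $C^\Sigma_\phi (t\wedge\tau)$, $2C^{(2)}_\phi (t\wedge\tau)^2$ and $\frac43 n\sigma C^{(1)}_\phi(t\wedge\tau)^3$ are bounded by the corresponding powers of $t$, and these appear in \eqref{cond_1-add}.

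\textbf{The remainder terms $T_1,\dots,T_4$.} These are what the smallness conditions control, and they are the main obstacle, since they involve iterated integrals of stochastic integrals up to a stopping time.

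\begin{enumerate}
\item For $T_1$, the gradient part $\int_0^r\mathrm{Im}\int\nabla\bar u\,\nabla(\phi e_k)\,d\beta_k$ is handled by stopping at $\tau$. By the Burkholder--Davis--Gundy inequality and Cauchy--Schwarz in $k$ and $x$, its $L^1(\Omega)$ norm is at most $C\sqrt{C^{(1)}_\phi}\,\mathfrak M\, r^{1/2}$, because $\|\nabla u\|_{L^2}\le\mathfrak M$ before $\tilde\tau_{\mathfrak M}$. Integrating $r^{1/2}$ twice gives the factor $\frac{4}{15}t^{5/2}$, which leads to \eqref{cond_2-add}.
\item The nonlinear part of $T_1$ is treated the same way, using H\"older with exponents $\frac{2n}{n+2},\frac{2n}{n-2}$ and the Sobolev inequality \eqref{E:Sobolev} as in the proof of Lemma \ref{lem_Hadd}. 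This gives $C_n^{\frac{n+2}{n-2}}\mathfrak M^{\frac{n+2}{n-2}}\sqrt{C(\phi)}$ and hence \eqref{cond_3-add}.
\item $T_2$ is nonpositive, so it can be dropped. If one prefers to bound it in absolute value, H\"older with $L^{2n/(n-2)}$ and the Sobolev inequality give $\frac{n+2}{n-2}C_n^{\frac4{n-2}}\mathfrak M^{\frac4{n-2}}C(\phi)\,t^3$ times constants, which is \eqref{cond_4-add}.
\item For $T_3$ one must write $x\cdot\nabla(\phi e_k)$ against $\bar u$ and integrate by parts, or use $x\bar u$, which needs $V$. The integrand involves $\EE M(u(s))$ and $\|\phi\|_{L^{0,0}_2}$, and $\EE M(u(s))$ grows like $\EE M(u_0)+s\|\phi\|^2_{L^{0,0}_2}$ by the It\^o formula for the mass. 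This produces the $t^{3/2}$ and $t^2$ terms with constants $\frac{16n}{3}$, $32C^{(1)}_\phi$ and $4\sqrt{38}n\|\phi\|^2_{L^{0,0}_2}$ in \eqref{cond_1-add}.
\item $T_4$ is a martingale stopped at $t\wedge\tau$. It has zero expectation provided it is a true martingale; we localize it with $V$ and use $C^\Sigma_\phi$ via a stopping time on $V$ and Fatou's lemma. The leftover contribution is absorbed into the $C^\Sigma_\phi$ terms of \eqref{cond_1-add}.
\end{enumerate}

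Collecting these bounds, the expectation of the noise contribution is at most $4\epsilon$ (one $\epsilon$ from each of \eqref{cond_1-add}--\eqref{cond_4-add}). Therefore
\[
0\le \EE V(u(t\wedge\tau))\le \EE V(u_0)+4\epsilon+4\{\EE G(u_0)^2\}^{1/2}\{\EE (t\wedge\tau)^2\}^{1/2}-\tfrac{8}{n-2}C_n^{-n}(\delta^2-\beta_0)\EE\big((t\wedge\tau)^2\big),
\]
which is negative by \eqref{cond-3-add-Bis}. This contradiction forces $P(\tau(u_0)\le t)>0$.

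The delicate step is the integrability needed to apply the Burkholder--Davis--Gundy inequality and to discard the martingale expectations. This requires the moment assumptions on $u_0$ (in particular $\EE M(u_0)^2<\infty$ and $\EE V(u_0)<\infty$) together with a localization on $V(u)$ and $M(u)$. We first try to handle this by an extra stopping time $\inf\{s: V(u(s))\ge N\}$, followed by a monotone limit $N\to\infty$.
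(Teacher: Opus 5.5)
Your deterministic compensation estimate, the $L^1(\Omega)$ bounds on the two parts of $T_1$ (giving \eqref{cond_2-add}--\eqref{cond_3-add}), and the observation that $T_2\le 0$ are all fine and agree with the paper. The gap is in $T_3$, and with it the claim that the total noise contribution is at most $4\epsilon$.

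$T_3$ is a time integral, up to $t\wedge\tau$, of the martingale part of $G$. So $\EE\,T_3(t\wedge\tau)\neq 0$ in general, and it must be estimated in $L^1(\Omega)$. The piece $\int_{\RR^n}\bar u\,x\cdot\nabla(\phi e_k)\,dx$ can be controlled in only two ways.
\begin{itemize}
\item If you pair it as $\|xu\|_{L^2_x}\|\nabla(\phi e_k)\|_{L^2_x}$, the quadratic variation is bounded by a multiple of $C^{(1)}_\phi V(u)$. After Burkholder--Davis--Gundy and Young's inequality you get exactly the $32C^{(1)}_\phi t^2$ term of \eqref{cond_1-add}, but also a remainder $\int_0^t\EE V(u(s\wedge\tau))\,ds$ that no smallness condition can absorb.
\item If you integrate by parts instead, you get $\|\nabla u\|_{L^2_x}\|x\phi e_k\|_{L^2_x}$, hence a term of order $\mathfrak M\sqrt{C^\Sigma_\phi}\,t^{3/2}$. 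None of \eqref{cond_1-add}--\eqref{cond_4-add} controls it, since there $\mathfrak M$ is only paired with $C^{(1)}_\phi$ and $C(\phi)$.
\end{itemize}
In either case, the constants you quote cannot arise without a $V$ term. So your final $V$-free inequality does not follow from the stated hypotheses.

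The paper takes the first option and keeps this feedback. Besides \eqref{upper-tilde-T3}, it bounds $T_4$ by $C^\Sigma_\phi+\int_0^t\EE V(u(s\wedge\tau))\,ds$, using $|\int|x|^2\bar u\,\phi e_k\,dx|\le V(u)^{1/2}\|x\phi e_k\|_{L^2_x}$ and Young, rather than proving $T_4$ is centered. This gives $\EE V(u(t\wedge\tau))\le[\cdots]+2\int_0^t\EE V(u(s\wedge\tau))\,ds$. The paper then uses the de Bouard--Debussche a priori bound $\EE\sup_{s\le t}V(u(s\wedge\tau))<\infty$, valid because $\|\nabla u\|_{L^2_x}\le\mathfrak M$ before $\tilde\tau_{\mathfrak M}$, and closes with Gronwall to reach \eqref{form_V_Q}. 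This feedback-plus-Gronwall step is what your proposal is missing.

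One caution if you carry it out. Set
$A(s)=\EE V(u_0)+4\epsilon+4\EE(G(u_0)(s\wedge\tau))-\frac{8}{n-2}C_n^{-n}(\delta^2-\beta_0)\EE((s\wedge\tau)^2)$.
This function is not nondecreasing, since $A(0)>0>A(t)$. Gronwall therefore does not by itself give $\EE V(u(t\wedge\tau))\le e^{2t}A(t)$. Two ways to make the argument rigorous:
\begin{itemize}
\item Use a weighted Young inequality $2\sqrt{ab}\le\eta^{-1}a+\eta b$ with $\eta t$ small. This gives $\EE V(u(t\wedge\tau))\le A(t)+(e^{2\eta t}-1)\sup_{s\le t}A(s)^+$, at the price of larger constants in the noise conditions.
\item Keep your integration-by-parts route, add a smallness hypothesis on $\mathfrak M\sqrt{C^\Sigma_\phi}\,t^{3/2}$, and prove that $T_4$ is centered. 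Your localization on $V$ plus Fatou suffices for that, and no Gronwall step is needed.
\end{itemize}
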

\begin{proof}
Suppose that   $t<\tau(u_0)$ a.s. for some $t$. Taking expected values in  \eqref{V(u)-add-2}, we obtain 
\begin{align}		\label{EV-add-deter}
 \EE\big( V\big(u(t\wedge \tau)\big)   = &\,  \EE\big( V(u_0) \big) 
+ 4 \EE\big(  G(u_0) (t\wedge \tau) \big) + \frac{8n}{n-2} \EE\big( H(u_0)  (t\wedge \tau)^2\big)  \nonumber \\
& - \frac{16 }{n-2}   \EE\Big(  \int_0^{t\wedge \tau} ds \int_0^s \|\nabla u(r)\|_{L^2}^2 dr\Big)  +  C_\phi^{\Sigma}  \EE\big( (t\wedge \tau)\big)  \nonumber \\
&
+ 2 C^{(2)}_\phi  \EE\big((t\wedge \tau)\big)^2  + \frac{4}{3} n\sigma C^{(1)}_\phi \EE\big(  (t\wedge \tau)^3\big) + \sum_{j=1}^4 \tilde{T}_j(t),
\end{align} 
where for $j=1, ..., 4$ we set $\tilde{T}_j(t)= \EE\big( T_i(t\wedge \tau)\big)$, with the terms $T_j(t\wedge \tau)$ defined 
in the statement of \eqref{V(u)-add-2}. 
By assumption we have $ H(u_0 )\leq \beta_0 n^{-1} {C_n^{-n}}$ a.s. and by definition of $\tau_\delta$,
for $0\leq r\leq \tau \leq \tau_\delta$, we have $ \|\nabla u(r)\|_{L^2}^2
\geq \delta^2 C_n^{-n}$. Hence,
\begin{align}		\label{Hyp-Q-add}
\frac{8n}{n-2} 
&\EE\big(   H(u_0)  (t\wedge \tau)^2\big) -\frac{8}{n-2}   
\EE\Big(  \int_0^{t\wedge \tau} ds \int_0^s \| \nabla u(r)\|_{L^2}^2 dr \Big) \nonumber \\
&\leq \Big[ \frac{8n}{n-2} \beta_0 {n^{-1} C_n^{-n}} - \frac{8}{n-2} \delta^2 C_n^{-n}   \Big] \EE\big( (t\wedge \tau)^2\big) \nonumber \\
&\leq - \frac{8}{n-2} C_n^{-n}  \big( \delta^2-\beta_0) 
\, \EE\big( (t\wedge \tau)^2 \big). 
\end{align}

We next give upper estimates of $|\tilde{T}_j(t)| $, $j=1, ..., 4$.  All these upper estimates contain either a multiplicative factor that depends on the strength of the noise 
or a time integral of $ \EE(V(\cdot \wedge \tau))$. We sketch the arguments and send to the proof of \cite[Proposition~5.10]{MilRou} for more details. 
First, note that  $|\tilde{T}_1(t) | \leq  \tilde{T}_{1,1}(t) + \tilde{T}_{1,2}(t)$, where
\begin{align*}
\tilde{T}_{1,1}(t)=
&\;  \frac{16n}{n-2}   \Big| \EE\Big( \int_0^{t\wedge \tau} \!\!\! ds \int_0^s \!dr \int_0^r \sum_{k\geq 0} \int_{\RR^n} \nabla \overline{{u}(r_1,x)} \nabla (\phi e_k)(x) dx 
d\beta_k(r_1)\Big)\Big|, \\
\tilde{T}_{1,2}(t) = 
&\; \frac{16n}{n-2}   \Big| \EE\Big( \int_0^{t\wedge \tau}\! \!\! ds \int_0^s \! dr \int_0^r \sum_{k\geq 0}  \int_{\RR^n} 
|u(r_1,x)|^{\frac{4}{n-2}} \overline{u(r_1,x)} \phi e_k(x) dx d\beta_k(r_1)\Big) \Big|.
\end{align*}
The Cauchy-Schwarz inequality (with respect to $dP$ and then to $dx$) and the It\^o isometry imply
\begin{align}\label{upp-tildeT_11}
 \tilde{T}_{1,1}(t) \leq
 &\;  \frac{16n}{n-2}  \int_0^t\! \! ds \int_0^s \! dr \Big\{ \EE\Big( \int_0^{r\wedge \tau}  \sum_{k\geq 0}  \Big( \int_{\RR^n} 
 \nabla \bar{u}(r_1,x) \nabla (\phi e_k)(x) dx\Big)^2 dr_1 \Big) \Big\}^{\frac{1}{2}} \nonumber \\
 \leq 
 & \; \frac{16n}{n-2} \;   \mathfrak M  \,   \sqrt{C_\phi^{(1)}} \, \frac{4}{15} t^{\frac{5}{2}},
\end{align}
where in the last estimate we used the fact that  $r\leq \tau \leq \tilde{\tau}_{\mathfrak M}$. 
A similar computation using H\"older's inequality implies that
\begin{align}
 \tilde{T}_{1,2}(t) \leq & \; \frac{16 n}{n-2}  \int_0^t\!\! ds \int_0^s\!\! dr \Big\{ \EE\Big(\int_0^{r\wedge \tau} \sum_{k\geq 0} \Big( \int_{\RR^n} |(u(r_1,x)|^{\frac{n+2}{n-2} }
  |\phi e_k(x)| dx \Big)^2 dr_1 \Big) \Big\}^{\frac{1}{2}} \notag \\
\leq &\;  \frac{16 n}{n-2}  \int_0^t\!\! ds \int_0^s\!\! dr \Big\{ \EE\Big(\int_0^{r\wedge \tau} \sum_{k\geq 0}  \|u(r_1)\|_{L_x^{\frac{2n}{n-2} }}^{ \frac{2(n+2)}{n-2} }
\| \phi e_k\|_{L_x^{\frac{2n}{n-2}}}^2  dr_1 \Big) \Big\}^{\frac{1}{2}}. \label{E:sub1}
\end{align}

Furthermore, the Sobolev embedding \eqref{Sob} implies that
$ \|u(r_1)\|_{L_x^{2n/(n-2)}} \leq C_n    \|\nabla u(r_1)\|_{L^2_x}$.

Since $C(\phi)=\sum_k \| \phi e_k\|_{L_x^{\frac{2n}{n-2}}}^2 $, we deduce (recalling the definition of  $\tilde{\tau}_{\mathfrak M}$  from 
\eqref{def-tildetau}, which implies  the bound $\mathfrak M$ 
on the gradient up to the stopping  time 
$\tilde{\tau}_{\mathfrak M}$) 
\begin{equation}	\label{upp-tildeT_12}
\tilde{T}_{1,2}(t) \leq \frac{16 n}{n-2}   C_n^{\frac{n+2}{n-2}} \;  \sqrt{C(\phi)} \;  {\mathfrak M}^{\frac{n+2}{n-2}}\; 
\frac{4}{15} t^{\frac{5}{2}}. 
\end{equation}
To bound $\tilde T_2(t)$, we use the Cauchy-Schwarz and H\"older inequalities, and then \eqref{Sob} and the definition of $C(\phi)$ to obtain 
\begin{align} \label{upp-tilde-T2}
|\tilde{T}_2(t)| \leq &\;  \frac{4n}{n-2}  \int_0^t\!\! ds \int_0^s\! \! dr \, \EE\Big( \int_0^{r\wedge \tau} 
\frac{n+2}{n-2} \sum_{k\geq 0}  \int_{\RR^n} |u(r_1,x)|^{\frac{4}{n-2}} |\phi e_k(x)|^2
 dx dr_1  \Big) 
 \nonumber  \\
\leq &\; \frac{4n(n+2)}{(n-2)^2}    \int_0^t\!\! ds \int_0^s\! \! dr \, \EE\Big( \int_0^{r\wedge \tau} 
\sum_{k\geq 0}   \|u(r_1)\|_{L_x^{\frac{2n}{n-2}}}^{\frac{4}{n-2}}  \; \|\phi e_k\|_{L_x^{\frac{2n}{n-2}}}^2 dr_1 \Big) \nonumber \\
\leq &  \;  \frac{4n(n+2)}{(n-2)^2} \; 
C_n^{\frac{4}{n-2}}  {\mathfrak M}^{\frac{4}{n-2}}  \; 
 C(\phi) \; \frac{t^3}{6}.
\end{align}

We next bound the term $\tilde{T}_3(t)$. The computations made to prove 
\cite[(5.61)]{MilRou} imply 
\begin{align}		\label{upper-tilde-T3}
\big| \tilde{T}_3(t)\big| 
\leq &\;  32 \, C_\phi^{(1)}  t^2 +  \frac{16}{3}\,  {n} \|\phi\|_{L^{0,0}_2} \EE\big(M(u_0)\big)^{\frac{1}{2}}  \, t^{\frac{3}{2}} +  4 \sqrt{38}\, n \|\phi\|_{L^{0,0}_2}^2 t^2 
 + \int_0^t  \EE\big(  V(u(s\wedge \tau))\big) ds . 
\end{align}

Finally, a similar argument implies (see the proof of \cite[(5.62)]{MilRou})
\begin{equation}		\label{upper-tilde-T4}
\big| \tilde{T}_4(t)\big|  
\leq   C_\phi^\Sigma  + \int_0^t \EE\big( V(u(s\wedge \tau))\big) ds. 
\end{equation} 

Collecting all the estimates \eqref{EV-add-deter}--\eqref{upper-tilde-T4}, we obtain
\begin{align*}
\EE\big( & V(u(t\wedge \tau))\big) \leq \; \EE\big(  V(u_0) \big) 
+ 4  \EE\big(   G(u_0) (t\wedge \tau) \big)   -\frac{8 }{n-2}  C_n^{-n} (\delta^2-\beta_0) \EE\big( (t\wedge \tau)^2\big)   \\
& +  \big[ C_\phi^\Sigma t 
 + 2  C_\phi^{(2)} t^2 + \frac{4}{3} n\sigma C_{\phi}^{(1)} t^3 \big]  +  \frac{64 n}{15(n-2) }  \,\mathfrak M \sqrt{C_\phi^{(1)}}  t^{\frac{5}{2}} \\\
&+ \frac{64n}{15(n-2) }   \,   C_n^{\frac{n+2}{n-2}} \, 
{\mathfrak M}^{\frac{n+2}{n-2}}\,  \sqrt{C(\phi)}\,  t^{\frac{5}{2}}  
+ \frac{4n(n+2)}{3(n-2)^2} \,  C_n^{\frac{4}{n-2}} \, {\mathfrak M}^{\frac{4}{n-2}}  \,  t^3  \\
&+ 32  C^{(1)}_\phi t^2 + \frac{16 }{3} n  \| \phi\|_{L^{0,0}_2} \sqrt{M(u_0)}  t^{\frac{3}{2}} + 4 \sqrt{38} n \|\phi\|_{L^{0,0}_2}^2 t^2 
+  C_\phi^\Sigma 
+2 \int_0^t  \EE\big(  V(u(s\wedge \tau))\big) ds.
\end{align*}

Since $\| \nabla u(s)\|_{L^2}^2 \leq {\mathfrak M}^2
<\infty$ for $ s\leq t\wedge \tau \leq t\wedge \tilde{\tau}_{\mathfrak M}$, the argument used in
\cite[page 85]{deB_Deb_PTRF}  implies that $\EE\big( \sup_{s\in [0,t]} V(u(s))\big) <\infty$. Hence, Gronwall's lemma and the bounds
\eqref{cond_1-add}--\eqref{cond_4-add} imply  
\begin{align}   \label{form_V_Q}
\EE\big( &  V(u(t\wedge \tau)) \big) \leq  \;  e^{2t} 
 \Big[ \EE\big(  V(u_0)\big)  + 4 \EE\big( G(u_0)   (t\wedge \tau) \big)  - \frac{8}{n-2} C_n^{-n} (\delta^2-\beta_0)  \EE\big(  (t\wedge \tau)^2\big) 
+  { 4\epsilon }  \Big].
\end{align} 

The hypothesis  \eqref{cond-3-add-Bis}  implies that $ \EE\big( V(u(t\wedge \tau))\big) <0$, which brings a contradiction. Hence, $P( \tau(u_0)<t) >0$, concluding the proof. 
\end{proof}

We are now ready to embark onto the proof of Theorem \ref{th_blowup-add}.
\smallskip

{\it Proof of Theorem \ref{th_blowup-add}.} We assume that $\tau(u_0)=\infty$ a.s. and look for a contradiction.

Let $\gamma \in (\beta,1)$ and let $\epsilon_0\in \big( 0, \frac{1}{4}\big)$ satisfy $\gamma (1-4\epsilon_0) > \beta_0$. 
Recall the monotonicity assumptions on the function $f$ defined by \eqref{def-f}. 
Then, given $\gamma \in (\beta,1)$, we deduce the existence of $\delta >1$ such that
\begin{equation}\label{E:IFF}
\big(  x>x_c \quad \mbox{\rm and}\quad   f(x)\leq \gamma f(x_c) \big) \quad   \Longrightarrow
\quad  x\geq \delta x_c. 
\end{equation}

Since $\|\nabla u_0\|_{L^2_x} > x_c$ a.s., $\|\nabla u_0\|_{L^2_x} $ is located on the decreasing side of the graph.
 Therefore, by a.s. continuity of $\|\nabla u(s)\|_{L^2_x}$ and $\|u(s)\|_{L^2_x}$ in $s$, we deduce that given  any  stopping time $\tau^*_0$ and $s\leq \tau^*_0$ 
such that $H(u(s)) \leq  \gamma f(x_c)$ and
$\|\nabla u(s)\|_{L^2_x}  > x_c$, we have 
$\|\nabla u(s)\|_{L^2_x} \geq  \delta x_c$, 
in other words, 
\begin{equation}     \label{gamma-delta-A}
    \sup_{s\in [0,\tau_0^\ast) }H(u(s))  \leq \gamma H(Q)  \; \Longrightarrow 
\inf_{s\in [0,\tau_0^\ast)}\|\nabla u(s)\|_{L^2_x}  \geq \delta \|\nabla Q\|_{L^2_x} .
\end{equation} 

For $\gamma$ and $\delta$ chosen above,  set 
\begin{equation} \label{sigma_gamma2}
\tilde{\sigma}_\gamma = \inf\big\{ s\geq 0 :    H(u(s))  \geq \gamma H(Q) \big\}, 
\end{equation}
and
\begin{equation} \label{tau_delta2}
\tau_\delta = \inf\big\{ s\geq 0 : \| \nabla u(s)\|_{L^2_x}   \leq  \delta
 \|\nabla Q\|_{L^2_x} \big\} .
\end{equation} 
Then by \eqref{gamma-delta-A}, we have $\tilde{\sigma}_\gamma \leq \tau_{\delta}$. Furthermore, 
given any stopping time $\tau \leq \tau_\delta$
\[  \int_0^{T\wedge \tau}  ds \int_0^s dr \|\nabla u(r)\|_{L^2}^2 \geq 
\delta^2  \int_0^{T \wedge \tau} ds \int_0^s dr
\|\nabla Q\|_{L^2} ^2 .\]
Using once more the identities in \eqref{nabla-H-Q}, we deduce that
\begin{align} \label{compensation_add}
       \frac{8n}{n-2}  &  \EE\big( H(u_0)    (T\wedge \tau)^2 \big) -  \frac{16 }{n-2} \EE\Big( 
 \int_0^{T\wedge \tau } ds \int_0^s \|\nabla u(r)|_{L^2}^2 dr \Big)  \nonumber \\
 & \leq - \frac{8}{n-2} C_n^{-n}  (\delta^2-\gamma)  \EE\big(  (T\wedge \tau)^2 \big).
\end{align}

The upper estimate \eqref{EV-add-deter} and the argument leading to 
\eqref{form_V_Q} imply that for ~$\tau = \tau_\delta 
\wedge \tilde{\tau}_N$, where ~
$\tilde{\tau}_N = \inf\{ s\geq 0: \|\nabla u(s)\|_{L^2_x} \geq N\} $, 
we have  
\begin{align*}      
& \EE\big( V(u(T\wedge \tau)) \big) \leq  \;   \EE\big(  V(u_0) \big) + 4  \EE\big(  G(u_0) (T\wedge \tau) \big)  - \frac{8}{n-2} C_n^{-n}  (\delta^2-\gamma)   \EE\big( (T\wedge \tau)^2\big)
\nonumber \\
&\quad + \Big[ C_\phi^\Sigma T + 2 C^{(2)}_\phi T^2  { +\frac{4}{3} n\sigma C_\phi^{(1)} T^3 } \Big]
+\sum_{j=1}^4 \tilde{T}_j(T).
\end{align*}
For $\|\phi\|_{L^{0,0}_2}, \| \phi\|_{L^{0,1}_2}, C(\phi)$ and $C^\Sigma_\phi$ small enough (which implies that {$C^{(1)}_\phi$ and} $C^{(2)}_\phi$ are small),
the upper estimates of the terms $\tilde{T}_j, j=1, ...4$, yield the upper estimates {\eqref{cond_1-add}--\eqref{cond_4-add} }. Then by the Cauchy-Schwarz inequality we deduce that 
for $\tau \leq \tau_\delta$
\begin{align}   \label{Gronwall-V-add}
 \EE\big(  V(u(T\wedge \tau)) \big) \leq & \;  e^{2T} 
 \Big[ \EE\big( V(u_0) \big)  + 4  \big\{ \EE\big( G(u_0)^2\big) \big\}^{\frac{1}{2}} \big\{ \EE\big(   (T\wedge \tau)^2\big)  \big\}^{\frac{1}{2}}  
\nonumber \\
&\qquad  
- \frac{8 C_n^{-n} }{n-2}  (\delta^2-\gamma)   \EE\big( (T\wedge \tau)^2\big)
+ { 4\epsilon } \Big] .
\end{align}

We then proceed as in the proof of Theorem \cite[Theorem~5.9]{MilRou} and first fix a large value of $T>0$. As $T\to \infty$, $T\wedge \tau_\delta \to \tau_\delta$ 
and the monotone convergence
theorem implies that $ \EE\big( (T\wedge \tau_\delta)^2 \big)\to \EE(\tau_\delta^2)$. 
\smallskip

$\bullet$  If $\EE(\tau_\delta^2)=+\infty$, for any fixed $M$ (to be chosen later) we have
$\EE\big( (T\wedge \tau_\delta)^2\big) \geq M^2$ for some large $T$. 

$\bullet$  If $\EE(\tau_\delta^2)<\infty$, then for any $\mu\in (0,1)$ and close enough to 1, 
for $T$ large enough we have
$\EE\big( (T\wedge \tau_\delta)^2 \big) \geq \mu^2 \EE(\tau_\delta^2)$ and $P\big(  T<\tilde{\sigma}_\gamma \big) < {\epsilon}_0$, where
the last inequality follows from the fact that $\tilde{\sigma}_\gamma \leq \tau_\delta < \infty$ a.s.
\smallskip

As $M\to \infty$ the sequence ${\tilde \tau}_M \to \infty$ a.s. Indeed, if we had $\lim_M {\tilde \tau}_M=\tau $ such that $P(\tau<\infty)>0$, then we would deduce that
$P(\sup_{t\leq \tau} \| \nabla u(t)\|_{L^2} =\infty)>0$, so that $P(\tau(u_0) \leq \tau)>0$, which contradicts the assumption $\tau(u_0)=\infty$ a.s.
We deduce that 
for $\tilde{\tau}_{M_0} = \inf\{  s\geq 0 : \|\nabla u(s)\|_{L^2} \geq M_0\}$,
for some $M_0$ large enough, if $\bar{\tau}_0 = \tau_\delta \wedge \tilde{\tau}_{M_0}$, 
we have  
$\EE\big( (T\wedge \bar{\tau}_0)^2 \big) = \EE\big( (T\wedge \tau_\delta \wedge \tilde{\tau}_{M_0})^2\big) \geq \mu^2 
\EE\big( ( T\wedge\tau_\delta)^2 \big)$.
Hence, either $\EE\big( (T\wedge \bar{\tau}_0)^2 \big) \geq \mu^2 M_0^2$
or $\EE\big( (T\wedge \bar{\tau}_0)^2 \big) \geq \mu^4 \EE\big(\tau_\delta^2\big)$. 

For this choice of $T$, $M$ and $M_0$,  
set $X= \big\{\EE\big((T\wedge {\tau}_\delta)^2 \big)\big\}^{\frac{1}{2}}$ and consider the polynomial 
$-ax^2+bx+c$, with $a=\frac{8}{n-2} C_n^{-n} (\delta^2-\gamma^2) $, $b=4 \big\{ \EE\big( G(u_0) \big) \big\}^{\frac{1}{2}} $ and
$c= \EE\big( V(u_0) \big)  + {4\epsilon}$. The Cauchy-Schwarz inequality and \eqref{Gronwall-V-add} imply 
\[  \EE\big( V(T\wedge \bar{\tau}_0) \big)\leq e^{2T} \big[ -a X^2 +bX+c\big] .\]
Let $X_1<X_2$ be the roots of $-aX^2+bX+c$. Then for $X>X_2$, we can conclude that $\EE\big(  V(T\wedge \bar{\tau}_0)\big) <0$,
which proves that
$P( \tau(u_0)<T)>0$.
\smallskip

We next  claim that when the noise is ``small enough", we have 
$\EE\big( (T\wedge \tau_\delta)^2 \big) >2 X_2^2 $.
Choosing $\mu$ close enough to 1, we deduce that for ``small" noise, 
$\EE\big(  (T\wedge \bar{\tau}_{0})^2 \big) >  X_2^2 $.

$\bullet$  If $\EE(\tau_\delta^2)=\infty$,  choosing $M>\sqrt{2} X_2$, we deduce that  
$\EE\big( (T\wedge \tau_\delta)^2\big) > 2 X_2^2$ for $T$ large enough, completing the claim.

$\bullet$ If $\EE(\tau_\delta^2)<\infty$, 
it remains to prove that  $\EE\big( (\tau_\delta \wedge T)^2\big) \geq 2 X_2^2$. 

Since $\EE(\tau_\delta^2)<\infty$ and 
$\tilde{\sigma}_\gamma \leq \tau_\delta$, by a.s. continuity of $H(u(\cdot))$ on $[0,T]$, we deduce that 
$H(u(\tilde{\sigma}_\gamma)) = \gamma H(Q)$ a.s.
The upper bound \eqref{H-add} implies that for any stopping time $\tau \leq \tau_\delta$ 
\begin{align*}
   H\big( u(\tau)\big) 
\leq &  H(u_0) + \frac{1}{2}   \|\phi\|_{L^{0,1}_2}^2 \tau  +    I_1(\tau) + I_2(\tau), 
\end{align*}
where 
\begin{align}		\label{I1-I2}
I_1(\tau)= &\;  - {\rm Im} \Big( \int_0^{\tau} \sum_k \int_{\RR^n}
|u(s,x)|^{\frac{4}{n-2}} \overline{u(s,x) } (\phi e_k)(x) dx d\beta_k(s) \Big),  \nonumber \\
I_2(\tau)=  
&\; {\rm Im} \Big( \int_0^{\tau} \sum_k \int_{\RR^n} \nabla \overline {u(s,x)} \nabla(\phi e_k)(x) dx d\beta_k(s) \Big). 
\end{align} 
For a fixed positive $\mathfrak L$, 
set $\tilde{\tau}_{\mathfrak L} = \inf\{ s\geq 0: \|\nabla u(s)\|_{L^2_x} \geq \mathfrak L\}$. 
Then H\"older's inequality with conjugate exponents $\frac{2n}{n-2}$ and $\frac{2n}{n+2}$ implies that for any stopping time $\tau \leq \tau_\delta$, we have
\begin{align}		\label{upper-I1}
\EE(\big| I_1(\tau \wedge T  \wedge \tilde{\tau}_{\mathfrak L} ) |^2 \big)\leq   & 
\; \EE\Big( \int_0^{\tau \wedge T \wedge  \tilde{\tau}_{\mathfrak L}} 
\sum_k \Big|   \int_{\RR^n}
|u(s,x)|^{\frac{n+2}{n-2} }  (\phi e_k)(x) dx \Big|^2  ds\Big)   \nonumber \\
\; \leq &\; \EE\Big( \int_0^{\tau \wedge T \wedge \tilde{\tau}_{\mathfrak L}}  \sum_k \|u(s)\|_{L_x^{\frac{2n}{n-2} }}^{\frac{2(n+2)}{n-2} }
\|\phi e_k\|_{L_x^{\frac{2n}{n-2} }}^2  ds \Big)  \nonumber \\
\leq &\; C(\phi) C_n^{\frac{2(n+2)}{n-2} }   {\mathfrak L}^{\frac{2(n+2)}{n-2} }  T<  \infty,  
\end{align}
where  in the last two estimates we used \eqref{Sob}. 
Therefore,  
$\EE\big( I_1(\tau_\wedge T \wedge \tilde{\tau}_{\mathfrak L} ) \big)=0$. 

A similar computation yields
\begin{align*}
 \EE(    I_2(\tau  \wedge T \wedge \tilde{\tau}_{\mathfrak L})|^2) \leq & \; \EE\Big( \int_0^{\tau \wedge T  \wedge \bar{\tau}_{\mathfrak L}}
 \sum_k \Big( \int_{\RR^n} \nabla \bar{u}(s,x) \nabla(\phi e_k)(x) dx \Big)^2 ds \Big)  \\
 \leq & \; \EE\Big( \int_0^{\tau \wedge T  \wedge \tilde{\tau}_{\mathfrak L}} \sum_k \| \nabla (\phi e_k)\|_{L^2_x}^2 \| \nabla u(s)\|_{L^2_x}^2 ds \Big)  \leq \|\phi\|_{L^{0,1}_2}^2 {\mathfrak L}^2 T <\infty. 
\end{align*}
Hence, $\EE(I_2(\tau \wedge T \wedge \tilde{\tau}_{\mathfrak L}))=0$, and we deduce that for every ${\mathfrak L}>0$ and
$\tau \leq \tau_\delta$, we have
\[   \EE(  H\big( u(\tau \wedge T \wedge \tilde{\tau}_{\mathfrak L})\big) \leq  \EE\big( H(u_0)  \big) 
 + \frac{1}{2}   \|\phi\|_{L^{0,0}_2}^2 \EE\big((\tau \wedge T)
 \big). \]
We next let ${\mathfrak L}\to \infty$. 
 Let $\sigma_0= \inf\{ t\geq 0 \; : \; H(u(t))\leq 0\}$. 
  Since $\tilde{\sigma}_\gamma \leq \tau_\delta $, choosing $\tau = \tilde{\sigma}_\gamma \wedge \sigma_0$, we deduce  
\[ \sup_{\mathfrak L} H(u(\tilde{\sigma}_\gamma \wedge \sigma_0 \wedge T\wedge \tilde{\tau}_{\mathfrak L})) \leq 
 \sup_{s\leq \tilde{\sigma}_\gamma  }  H(u(s)) 
\leq  \gamma H(Q) + \frac1{2} {T}\| \phi\|_{L^{0,0}_2}^2  <\infty\quad \mbox{\rm a.s.}
\]
Furthermore, by definition of $\sigma_0$, the sequence $ \big\{ H(u(\tilde{\sigma}_\gamma \wedge \sigma_0 \wedge T\wedge \tilde{\tau}_{\mathfrak L}))\big\}_
{\mathfrak{L}} $
is nonnegative. 
Therefore,  as ${\mathfrak L} \to \infty$, the dominated convergence theorem  implies
$   \EE\big( H(u(\tilde{\sigma}_\gamma \wedge \sigma_0 \wedge T \wedge \tilde{\tau}_{\mathfrak L})) \big) \to 
\EE\big(  H(u(\tilde{\sigma}_\gamma \wedge \sigma_0 \wedge T )) \big)$.
 
The definition of $\tilde{\sigma}_\gamma$ in \eqref{sigma_gamma2} and the a.s. continuity of $H(u(\cdot))$ on $(0, \infty)$ imply $H(u(\tilde{\sigma}_\gamma\wedge \sigma_0 \wedge T)) 
= \gamma H(Q)$ a.s.  on $\{ \tilde{\sigma}_\gamma \leq \sigma_0 \wedge T\}$.
 Thus, using once more the inequality $H(u(s))(\omega) \geq 0$ for every $s\leq \sigma_0(\omega)$ and neglecting $ \EE\big(  H(u(\sigma_0\wedge T))
 1_{\{\sigma_0\wedge T<\tilde{\sigma}_\gamma \}}\big)\geq 0 $, we obtain
\begin{align} 	\label{upp-gamma-beta} 
\gamma H(Q)  P ( \tilde{\sigma}_\gamma    \leq \sigma_0 \wedge T) 
 \leq &\;  \EE\big(  H(u_0) \big)  +  
\frac{1}{2}  \|\phi\|_{L^{0,1}_2}^2  
\EE\big(   (\tilde{\sigma}_\gamma \wedge T) \big) 
\nonumber \\
\leq & \;   \beta_0  H(Q)  +  
\frac{1}{2} 
\|\phi\|_{L^{0,1}_2}^2
\big\{\EE( (\tau_\delta \wedge T)^2)\big\}^{\frac{1}{2}} ,
\end{align} 
where in the last upper estimate we used  the upper estimate $\tilde{\sigma}_\gamma \leq \tau_\delta$, 
the assumption 
\eqref{cond-u_0-Q-add} and the Cauchy-Schwarz inequality. 
Furthermore,
\begin{align*}  P(\tilde{\sigma}_\gamma \leq T)  & = P(  \tilde{\sigma}_\gamma \leq \sigma_0\wedge T) +
P(\sigma_0 < \tilde{\sigma}_\gamma \leq T )
\leq  P( \tilde{\sigma}_\gamma \leq \sigma_0\wedge T) +  P( \sigma_0 < T).
\end{align*} 
On $\{ \sigma_0 <T\}$  we have  
\[ 0=H(u(\sigma_0)) \geq H(u_0)  +  I_1(\sigma_0) + I_2(\sigma_0),\]
where  $I_1$ and $I_2$ have been defined in \eqref{I1-I2}. 
Let ${\mathfrak L}_0$ be chosen such that $P(\tilde{\tau}_{{\mathfrak L}_0} \leq T) \leq {\epsilon}_0$. 
Then 
 \[ P(  \sigma_0 <T  ) 
 \leq  P(\tilde{\tau}_{{\mathfrak L}_0}\leq T) +
\sum_{j=1,2}   P\Big(  \sup_{s\in [0,T \wedge \tilde{\tau}_{{\mathfrak L}_0} ]} |I_j(s)|  \geq {\beta_0 H(Q)}/{2} \Big). 
 \] 
The Markov  and Davis inequalities, and the computations that  yield \eqref{upper-I1}, imply
\begin{align*}
P\Big( \sup_{s\in [0,T \wedge \tilde{\tau}_{{\mathfrak L}_0}  ]} & |I_1(s)|  \geq \frac12 {\beta_0 H(Q)} \Big) \leq \frac{6}{\beta_0 H(Q)}
   \EE\Big( \Big\{ \int_0^{T\wedge \tilde{\tau}_{{\mathfrak L}_0}}
 \sum_k \Big|   \int_{\RR^n}
|u(s,x)|^{\frac{n+2}{n-2} }  (\phi e_k)(x) dx \Big|^2  ds \Big\}^{\frac{1}{2}} \Big)    \nonumber \\
\; \leq &\; \frac{6 n}{\beta_0}  C_n^n  \EE\Big( \Big\{ \int_0^{ T \wedge \tilde{\tau}_{{\mathfrak L}_0}}   \sum_k \|u(s)\|_{L_x^{\frac{2n}{n-2}}}^{\frac{2(n+2)}{n-2}  }
\|\phi e_k\|_{L_x^{\frac{2n}{n-2} }}^2 ds \Big\}^{\frac{1}{2}} \Big) \nonumber \\
\leq &\;6 n \, \beta_0^{-1}\,  C(\phi)^{\frac{1}{2}}  C_n^{n+ \frac{n+2}{n-2} }   {{\mathfrak L}_0}^{\frac{n+2}{n-2}}   \sqrt{T} \leq \epsilon_0, 
\end{align*}  
if $C(\phi)$ is small enough. 
 
Again, using the Markov and Davis inequalities, and then the Cauchy-Schwarz inequality with respect to  $dx$, we obtain 
\begin{align*}
  P\Big(  \sup_{s\in [0,T \wedge \tilde{\tau}_{{\mathfrak L}_0}  ]} & |I_2(s)|  \geq \frac12{\beta_0 H(Q)}  \Big) \leq \frac{6}{H(u_0)}
   \EE\Big( \Big\{ \int_0^{T\wedge \tilde{\tau}_{{\mathfrak L}_0}}
  \sum_k \Big( \int_{\RR^n}  \nabla \overline{u(s,x)} \nabla \phi e_k(x) dx \Big)^2 ds \Big\}^{\frac{1}{2}} \Big)\\
  & \leq \frac{6}{\beta_0 H(Q) }  \EE\Big( \Big\{ \int_0^{T\wedge \tilde{\tau}_{{\mathfrak L}_0}} {\mathfrak L}_0 \; \sum_k \| \nabla (\phi e_k)\|_{L^2_x}^2 ds \Big\}^{\frac{1}{2}} \Big)\\
  &\leq 6 n \, C_n^n \,  \beta_0^{-1}\,  \sqrt{T {\mathfrak L}_0} \|\phi\|_{L^{0,1}_2} \leq {\epsilon}_0,
 \end{align*} 
 if $\|\phi\|_{L^{0,1}_2} $ is small enough. 
 Therefore,  we deduce  $P(\sigma_0\leq T) \leq 3\epsilon_0$ for $\| \phi\|_{L^{0,1}_2}$ and $C(\phi)$ small enough.  Since we have chosen
 $T$ large enough to have $P(T<\tilde{\sigma}_\gamma)\leq \epsilon_0$, we obtain 
 \[ 1 \leq P(T<\tilde{\sigma}_\gamma) + P( \sigma_0 < T) + P(\tilde{\sigma}_\gamma \leq \sigma_0\wedge T)
 \leq P(\tilde{\sigma}_\gamma \leq \sigma_0\wedge T)+4\epsilon_0. 
 \] 
Hence, the upper estimate \eqref{upp-gamma-beta} yields
\[ \gamma H(Q)  \leq \beta_0 H(Q)  + 
\frac{1}{2}  \| \phi\|_{L^{0,1}_2}^2 \big\{ \EE\big( (\tau \wedge T)^2\big) \big\}^{\frac{1}{2}} + 4 \epsilon_0 \gamma H(Q), 
\] 
which implies 
\[ \EE\big(
 (\tau_\delta \wedge T)^2\big) \geq \frac{4
\big(\gamma (1-4\epsilon_0) -\beta_0 \big)^2  }{ C_n^{2n}\,  n^2 \, \|\phi\|_{L^{0,1}_2}^4}.
\]
 Recall that we have chosen $\epsilon_0\in \big(0, \frac{1}{4}\big)$ such that $\gamma (1-4\epsilon_0) > \beta_0$. 
Therefore, if  $\|\phi\|_{L^{0,1}_2}$ is small enough, we deduce that 
$\EE\big(  (\tau_\delta\wedge T)^2\big) \geq 2 X_2^2$,
which completes the proof.  
\hfill $\square$
\bigskip

\subsection{Multiplicative noise}\label{S:M-bup}
We start with solutions to \eqref{NLS_Ito_critical}. 
The following lemma is an adaptation of \cite[Proposition~3.2]{deB_Deb_AnnProb} to our energy-critical case.

\begin{lemma}\label{lem_v-G-multi}
Let $u$ be the solution to \eqref{NLS_Ito_critical} and assume that $u_0\in \dot{H}^1\cap \Sigma$ a.s.
Suppose that  
$\phi$ satisfies \eqref{E:Radon-m} and \eqref{M_phi}. 
Then for any
stopping time $\tau<\tau(u_0)$ a.s., we have
\begin{equation}    \label{V(u)}
V(u(\tau)) = V(u_0) +4 \int_0^\tau G(u(s)) ds\quad a.s.
\end{equation}
and 
\begin{align} \label{G(u)}
G(u(\tau)) =  \; G(u_0) + & \frac{4n}{n-2}  \int_0^\tau H(u(s)) ds - \frac{4n}{n-2}  \int_0^\tau \|\nabla u(s)\|_{L^2}^2 ds \nonumber \\
&\; + \sum_{l\geq 0} \int_0^\tau \int_{\RR^n} |u(s,x)|^2 \, x\!\cdot\!\nabla(\phi e_l)(x) \, dx \, d\beta_l(s)\quad a.s.
\end{align}
\end{lemma}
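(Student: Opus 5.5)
The plan is to apply It\^o's formula to the functionals $V$ and $G$ along the It\^o form \eqref{NLS_Ito_critical}. This is first done for a regularized problem and then passed to the limit, following \cite[Proposition~3.2]{deB_Deb_AnnProb}.

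\textbf{Regularization.} Fix a stopping time $\tau<\tau(u_0)$. Replace $u_0$ by smooth, decaying approximations $u_0^m\in H^2\cap\Sigma$, and truncate the weight $|x|^2$ by $|x|^2e^{-\varepsilon|x|^2}$, or by a smooth cutoff $\chi_\varepsilon(x)$ equal to $|x|^2$ for $|x|\le 1/\varepsilon$. If needed, also regularize through $(I-\eta\Delta)^{-1}$. The aim is to make $V$ and $G$ twice Fr\'echet differentiable on the state space, so that the It\^o formula in $L^2_x$ applies to the strong solution. Local well-posedness (Theorem~\ref{th_lwp_mul}) and continuity of the solution map give $u^m\to u$ in $C([0,\tau];H^1_x)$ on the relevant stopping-time intervals.

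\textbf{Identity \eqref{V(u)}.} Write $du=-i(\Delta u+\lambda|u|^{2\sigma}u)dt-iu\,dW-\frac12uF_\phi\,dt$ and compute $d\int\chi|u|^2$.
\begin{itemize}
\item The drift from $-\frac12 uF_\phi$ contributes $-\int\chi F_\phi|u|^2$.
\item The It\^o correction contributes $\sum_k\int\chi|u\phi e_k|^2=\int\chi F_\phi|u|^2$, so the two cancel.
\item The martingale part is $2\,\mathrm{Re}\int\chi\bar u(-iu\phi e_k)\,d\beta_k=0$, because $\phi e_k$ is real. This is exactly where the real-valued noise \eqref{E:Radon-m} is used.
\item The nonlinear term gives zero, since $\mathrm{Re}(-i|u|^{2\sigma+2})=0$.
\item The Laplacian gives $4\,\mathrm{Im}\int u\,x\cdot\nabla\bar u$ after integration by parts, up to truncation errors.
\end{itemize}
Letting $\varepsilon\to0$ yields $V(u(\tau))=V(u_0)+4\int_0^\tau G$. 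This also shows a posteriori that $V(u(t))<\infty$.

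\textbf{Identity \eqref{G(u)}.} Apply It\^o to $G(u)=\mathrm{Im}\int u\,x\cdot\nabla\bar u$, a quadratic functional.
\begin{itemize}
\item The deterministic drift reproduces the virial identity: $2\|\nabla u\|^2-\frac{2n\sigma}{\sigma+1}\|u\|^{2\sigma+2}_{2\sigma+2}$ up to normalization. This is rewritten through $H$ as $\frac{4n}{n-2}H-c\|\nabla u\|^2$.
\item The Stratonovich correction $-\frac12uF_\phi$ and the It\^o second-order term should again cancel. Writing $\psi=\phi e_k$, one checks that $\mathrm{Im}\int u\psi\,x\cdot\nabla(\bar u\psi)$ reduces to the same expression as the $F_\phi$ drift, but with the opposite sign.
\item The martingale term is $\mathrm{Im}\int[-iu\psi\,x\cdot\nabla\bar u+u\,x\cdot\nabla(i\bar u\psi)]\,d\beta_k$. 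After cancellation this leaves $\int|u|^2x\cdot\nabla\psi\,d\beta_k$.
\end{itemize}
The stochastic integral is a genuine local martingale, since $|x\cdot\nabla\phi e_l|$ is controlled via \eqref{M_phi} together with $V$, and $|u|^2x\in L^1$ given finite variance.

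\textbf{Main obstacle.} The hardest step is the limit $\varepsilon\to0$, $m\to\infty$ in the $G$ identity in the energy-critical setting. The nonlinear term $\int\chi|u|^{2\sigma+2}$ and the terms involving $\nabla\chi$ must converge using only $L^\infty_tH^1_x$ and $L^\gamma_tL^p_x$ control. For this I will use Sobolev \eqref{E:Sobolev}, which bounds $\|u\|_{L^{2n/(n-2)}}^{2n/(n-2)}$ by $\|\nabla u\|_{L^2}$, and stop at the times $\tilde\tau_{\mathfrak M}$ where $\|\nabla u\|_{L^2}$ stays bounded. The cutoff errors are then $O(\varepsilon)$ times $\sqrt{V}\,\|\nabla u\|_{L^2}$. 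The stochastic integrals converge in $L^2(\Omega)$ by the It\^o isometry and BDG \eqref{BDG}, using $M_\phi<\infty$. Finally, letting $\mathfrak M\to\infty$ recovers the identities for every $\tau<\tau(u_0)$.
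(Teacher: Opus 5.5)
Your approach is the paper's. Its proof simply invokes It\^o's formula as in de Bouard--Debussche ((3.4) there for $V$, the computation on p.~1095 for $G$), and your regularization scheme is a more detailed version of that. Your noise bookkeeping is correct. For real $\psi=\phi e_k$, the It\^o correction $\mathrm{Im}\int F_\phi\, u\, x\cdot\nabla\bar u$ cancels the contribution $-\mathrm{Im}\int F_\phi\, u\, x\cdot\nabla\bar u$ of the drift $-\frac12 uF_\phi$, and the martingale term reduces to $\int |u|^2\, x\cdot\nabla\psi\, d\beta_k$.

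The gap is the one quantity you leave as ``$c$'' and ``up to normalization'': that is exactly where the displayed statement is wrong, and your proposal never confronts it. With the paper's normalization and $\sigma=\frac{2}{n-2}$, the deterministic drift of $G$ is $2\|\nabla u\|_{L^2}^2-\lambda\frac{n\sigma}{\sigma+1}\|u\|_{L^{2\sigma+2}}^{2\sigma+2}$. The coefficient is $\frac{n\sigma}{\sigma+1}$, not $\frac{2n\sigma}{\sigma+1}$, and here $\frac{n\sigma}{\sigma+1}=2$. Hence
\[
2\|\nabla u\|_{L^2}^2-2\lambda\|u\|_{L^{\frac{2n}{n-2}}}^{\frac{2n}{n-2}}=\frac{4n}{n-2}H(u)-\frac{4}{n-2}\|\nabla u\|_{L^2}^2 .
\]
So $c=\frac{4}{n-2}$, not the $\frac{4n}{n-2}$ printed in \eqref{G(u)}. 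With $\phi=0$, the printed version would differ from the classical virial identity by $\frac{4(n-1)}{n-2}\int_0^\tau\|\nabla u(s)\|_{L^2}^2\,ds\neq 0$ for any nonzero datum.

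The paper's own computation ends with $-\frac{4}{n-2}$ in its last line. That is also the value used later: it produces the $-\frac{16}{n-2}$ in \eqref{E:MEV-1b} and \eqref{compensation}, consistent with $V''=\frac{16n}{n-2}H-\frac{16}{n-2}\|\nabla u\|_{L^2}^2$. You should carry out this computation explicitly and prove the corrected identity rather than the displayed one.

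A minor point: continuity of the data-to-solution map is not part of Theorem~\ref{th_lwp_mul}. If you regularize the data, you must extract it from the contraction argument. Alternatively, you can avoid it by mollifying the solution itself with $(I-\eta\Delta)^{-1}$.
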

\begin{proof} The equation \eqref{V(u)} is proved in \cite{deB_Deb_AnnProb}, see (3.4). To prove \eqref{G(u)}, we rewrite $G(u(\tau))$ using It\^o's formula as on page 1095 in \cite{deB_Deb_AnnProb}.  This yields
\begin{align*} 
G(u(\tau)) =  \; G(u_0) +& 2\int_0^\tau \int_{\RR^d} |\nabla u(s,x)|^2 dx ds - \frac{2 n}{n+2} \int_0^\tau \int_{\RR^n}
|u(s,x)|^{2\sigma +2} dx \\
&\; +\sum_{l\geq 0} \int_0^\tau \int_{\RR^n} |u(s,x)|^2 \, x\!\cdot\!\nabla(\phi e_l)(x) \, dx \, d\beta_l(s)\\
=  \; G(u_0) + & \frac{4n}{n-2}  \int_0^\tau H(u(s)) ds - \frac{4}{n-2}   \int_9^\tau \| \nabla u(s)\|_{L^2}^2 ds\\
&\; +\sum_{l\geq 0} \int_0^\tau \int_{\RR^n} |u(s,x)|^2 \, x\!\cdot\!\nabla(\phi e_l)(x) \, dx d\beta_l(s).
\end{align*}
This completes the proof of \eqref{G(u)}.
\end{proof}

The following result describes a sufficient condition on the initial condition and on some deterministic positive time with the blow-up occurring before that time with positive probability. 
Recall  that $f^1_\phi = \sum_{l\geq 0} |\nabla (\phi e_l)|^2$ and $M_\phi=\| f^1_\phi\|_{L^\infty}$.

\begin{theorem}\label{th_blowup2} 
Let the noise $W$ be real-valued and $\phi$ satisfy \eqref{E:Radon-m} and \eqref{M_phi}. 
Let $u_0 \in H^1_x \cap \Sigma$  a.s. be ${\mathcal F}_0$-measurable such that ~$\EE\big( M(u_0)^2\big)<\infty$, 
~ $\EE \big( V(u_0)\big) <\infty$, ~ $\EE\big( H(u_0)\big) <\infty$  and ~ $\EE \big( G(u_0)^2 \big) <\infty$. 
Let $\tau(u_0)$ be the stopping time  defined in Theorem \ref{th_lwp_mul}.

Suppose that for some constants $\beta_0$ and $\delta_0$ such that $0<\beta_0 < 1 < \delta_0$, we have 
\begin{equation} \label{cond-u_0-Q-blowup}
H(u_0)  \leq  \beta_0 H(Q) = \frac{\beta_0}{n C_n^n}  \; {\mbox{\rm a.s.} \quad \mbox{\rm  and}\quad 
\| \nabla u_0\|_{L^2} 
\geq  \delta_0 \|\nabla Q\|_{L^2}  = \delta_0 C_n^{\frac{n}{2}} \; \mbox{\rm a.s.} }
\end{equation}

Then for $M_\phi$ small enough, we have  
$P\big(\tau(u_0) < \infty\big) >0$.
\end{theorem}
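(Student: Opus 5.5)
We argue by contradiction, mirroring the additive case (Proposition \ref{Prop-blow-add} and the proof of Theorem \ref{th_blowup-add}), and suppose $\tau(u_0)=\infty$ a.s. The multiplicative setting is simpler in one respect. By Lemma \ref{lem_v-G-multi} the variance identity \eqref{V(u)} has no stochastic integral and no drift correction, and the mass $M(u(t))=M(u_0)$ is conserved a.s. since $W$ is real-valued.

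\textbf{Step 1: the variance identity.} Substituting \eqref{G(u)} into \eqref{V(u)}, and using the It\^o formula for $H(u(s))$ from \cite[Proposition 4.5]{deB_Deb_H1} (its drift is $\frac12\int|u|^2 f^1_\phi\,dx\le \frac12 M_\phi M(u_0)$, since $F_\phi$ enters only through the phase and the gradient), we get, for a stopping time $\tau$,
\[
V(u(t\wedge\tau))\le V(u_0)+4G(u_0)(t\wedge\tau)+\tfrac{8n}{n-2}H(u_0)(t\wedge\tau)^2-\tfrac{16}{n-2}\int_0^{t\wedge\tau}\!\!\int_0^s\|\nabla u(r)\|_{L^2}^2\,dr\,ds+\tfrac{4n}{3(n-2)}M_\phi M(u_0)(t\wedge\tau)^3+R(t).
\]
Here $R$ collects iterated stochastic integrals: the martingale part of $H$, integrated twice in time, and the term $\int\!\!\int |u|^2 x\cdot\nabla(\phi e_l)\,dx\,d\beta_l$, integrated once. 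We bound $\EE|R|$ as in \cite[Proposition 5.10 / Section 5.3]{MilRou}. The $H$-martingale is $\sum_l\int\!\!\int \mathrm{Im}(\nabla\bar u\, u\,\nabla\phi e_l)\,dx\,d\beta_l$, whose quadratic variation is at most $M_\phi M(u_0)\|\nabla u\|_{L^2}^2$. After localizing with $\tilde\tau_{\mathfrak M}$ this contributes $C\sqrt{M_\phi}\,\mathfrak M\,\EE(M(u_0))^{1/2}t^{5/2}$. For the $G$-martingale, Cauchy--Schwarz gives $\big(\int|u|^2|x||\nabla\phi e_l|\big)^2\le M(u_0)V(u)\,|\nabla\phi e_l|$-type bounds. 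Controlling the $l$-sum by $M_\phi$ pointwise yields $\le \EE(M(u_0)^2)M_\phi t^2+\int_0^t\EE V(u(s\wedge\tau))ds$, which Gronwall absorbs. Hence for any $\epsilon,\mathfrak M,t$ there is a smallness threshold on $M_\phi$ that makes all noise terms $\le\epsilon$. We then obtain the analogue of \eqref{form_V_Q}:
\[
\EE V(u(t\wedge\tau))\le e^{ct}\Big[\EE V(u_0)+4\{\EE G(u_0)^2\}^{1/2}\{\EE(t\wedge\tau)^2\}^{1/2}-\tfrac{8}{n-2}C_n^{-n}(\delta^2-\gamma)\EE(t\wedge\tau)^2+4\epsilon\Big],
\]
with $\tau=\tau_\delta\wedge\tilde\tau_{\mathfrak M}$. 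The negative term comes from the trapping implication \eqref{gamma-delta-A}, exactly as in \eqref{compensation_add}.

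\textbf{Step 2: lower bound on $\EE(\tau_\delta\wedge T)^2$.} This is the main obstacle. We need $\EE((\tau_\delta\wedge T)^2)$ to exceed the larger root $X_2^2$ of the quadratic (up to the factor $2$ and $\mu$ as before). The proof copies the additive case. Define $\tilde\sigma_\gamma$, $\sigma_0$, $\tilde\tau_{\mathfrak L}$. Localize the $H$-martingale, which has zero mean after the $\tilde\tau_{\mathfrak L}$ stopping, and let $\mathfrak L\to\infty$ by dominated convergence, using $H\ge0$ before $\sigma_0$. This gives
\[
\gamma H(Q)\,P(\tilde\sigma_\gamma\le\sigma_0\wedge T)\le\beta_0H(Q)+\tfrac12M_\phi\EE\big(M(u_0)(\tau_\delta\wedge T)\big)\le\beta_0H(Q)+\tfrac12M_\phi\{\EE M(u_0)^2\}^{1/2}\{\EE(\tau_\delta\wedge T)^2\}^{1/2}.
\]
Next, $P(\sigma_0<T)\le3\epsilon_0$ by Davis' inequality, with the bound $C\sqrt{M_\phi T}\,\mathfrak L_0\,\EE(M(u_0))^{1/2}$, once $M_\phi$ is small. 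With $T$ chosen so that $P(T<\tilde\sigma_\gamma)\le\epsilon_0$, this gives
\[
\EE(\tau_\delta\wedge T)^2\ge \frac{4(\gamma(1-4\epsilon_0)-\beta_0)^2H(Q)^2}{M_\phi^2\,\EE M(u_0)^2},
\]
which tends to $\infty$ as $M_\phi\to0$. The case $\EE\tau_\delta^2=\infty$ is handled trivially as before.

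\textbf{Step 3: conclusion.} The delicate point is the order of choices, which we fix as in the additive proof. First fix $\gamma,\delta,\epsilon_0$. Then use Step 2 to make $\EE(\tau_\delta\wedge T)^2>2X_2^2$. Choose $M_0$ so that truncation by $\tilde\tau_{M_0}$ loses only a factor $\mu^2$. Finally shrink $M_\phi$ further so that the Step 1 noise terms are $\le\epsilon$ for these $T,M_0$. A concern here is that $T$ and $M_0$ depend on $M_\phi$. Since Step 2's bound improves monotonically as $M_\phi$ decreases, we would fix $T$ and $M_0$ for a reference noise level and argue monotonicity/uniformity in the noise scaling. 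This is the step I expect to need the most care. Then $\EE V(u(T\wedge\bar\tau_0))<0$, which contradicts $V\ge0$, so $P(\tau(u_0)<\infty)>0$.
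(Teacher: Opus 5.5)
Your outline follows the paper's proof of Theorem~\ref{th_blowup2} essentially step for step. Both use the variance identity from Lemma~\ref{lem_v-G-multi} plus the energy formula, localize at $\tau_\delta\wedge\tilde\tau_N$, bound $\EE(\tau_\delta\wedge T)^2$ from below by the $\sigma_0$/Davis argument, and close with the quadratic in $X$. The paper also makes the choices in the same order you do: $T_0$, $N_0$, $k_0$ are taken from the law of the solution, and only then is $M_\phi$ shrunk.

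The concern you flag in Step~3 is therefore a genuine gap, and the written argument does not close it either. The parameters $T$, $M_0$, $\mathfrak L_0$ are functionals of the law of $u$, hence of $\phi$. So ``afterwards take $M_\phi$ smaller'' is circular, and this affects the branch $\EE(\tau_\delta^2)=\infty$ as well.

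Your proposed repair by monotonicity in the noise fails, and your own Step~2 shows why. Its conclusion together with $(\tau_\delta\wedge T)^2\le T^2$ gives
\[
M_\phi\,T\;\ge\;K:=\frac{2\big(\gamma(1-4\epsilon_0)-\beta_0\big)H(Q)}{\{\EE(M(u_0)^2)\}^{1/2}}>0,
\]
so every admissible $T$ is at least $K/M_\phi$ and blows up as the noise decreases. At such a $T$ two things break:
\begin{itemize}
\item The bound $\frac{4n}{3(n-2)}M_\phi\,\EE(M(u_0))\,T^3$ you use for the Step~1 drift term is at least $\frac{4n}{3(n-2)}\EE(M(u_0))\,K^3M_\phi^{-2}$. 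It cannot be made $\le\epsilon$ for small $M_\phi$.
\item The Davis bound $C\sqrt{M_\phi T}\,\mathfrak L_0\{\EE(M(u_0))\}^{1/2}$ on $P(\sigma_0<T)$ is at least $C\sqrt{K}\,\delta_0\|\nabla Q\|_{L^2}\{\EE(M(u_0))\}^{1/2}$. This uses that $\mathfrak L_0>\delta_0\|\nabla Q\|_{L^2}$ is forced, since otherwise $\tilde\tau_{\mathfrak L_0}=0$. In fact $\mathfrak L_0$ must control $\|\nabla u\|_{L^2}$ up to times of order $M_\phi^{-1}$. So the smallness $P(\sigma_0<T)\le 3\epsilon_0$ cannot be produced by shrinking the noise.
\end{itemize}
Within your structure, Steps~1 and~2 cannot be satisfied at a common $T$ for small $M_\phi$, whatever order you choose the parameters in.

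The mechanism is diffusive scaling. In \eqref{Mu_0H(u)}, both the drift $\frac12\int|u|^2f^1_\phi\,dx$ and the quadratic variation of ${\rm Im}\,I$ are $O(M_\phi)$, with factors $M(u_0)$ and $M(u_0)\|\nabla u\|_{L^2}^2$ respectively. Shrinking the noise therefore mainly slows the energy down. On the time scale $M_\phi^{-1}$ needed to drift from $\beta_0H(Q)$ to $\gamma H(Q)$, the martingale part fluctuates by $O(1)$, so there is no reason to expect $P(\sigma_0<\tilde\sigma_\gamma)$ to be small. On that same time scale the noise drift in the variance is comparable to the negative virial term.

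A proof along these lines needs estimates that are uniform in $\phi$ on a time horizon fixed \emph{before} the noise is reduced. For example, take a horizon $t_*$ determined only by $\EE V(u_0)$, $\EE(G(u_0)^2)$ and $\delta^2-\beta_0$. Then replace the solution-dependent cut-offs $\tilde\tau_{M_0}$ and $\tilde\tau_{\mathfrak L_0}$ by a noise-uniform control of $\|\nabla u\|_{L^2}$, or of $\int_0^{t_*}\|\nabla u\|_{L^2}^2$, up to $t_*$. That ingredient is missing from your outline, and consulting the paper's proof will not supply it.
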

As in the case of additive noise, the proof relies on the corresponding technical result.

\begin{Prop}\label{Prop-blow-multi}
Let $u_0$ and $\phi$ satisfy the assumptions of Theorem \ref{th_blowup2}, 
replacing $\delta_0$ by $\delta>1$ in the statement of \eqref{cond-u_0-Q-blowup}. Suppose that for some $\epsilon >0$ and $N>0$, $t$ and $M_\phi$ satisfy the
following conditions
\begin{align}
\EE\big(M(u_0) \big) M_\phi  \Big( \frac{32}{3(n-2)} t^3 + 4t \Big) 
&\leq \epsilon,  \label{cond_1}\\
   \frac{64 n}{15 (n-2) } \big\{  \EE\big( M(u_0)\big)\big\}^{ \frac{1}{2}}\sqrt{M_\phi} N t^{\frac{5}{2}} & \leq \epsilon.    \label{cond_2} 
\end{align}
For $1<\delta<\delta_0$ set
\begin{align}
\tau_\delta = &\; \inf\{ s\geq 0 \, :\, \| \nabla u(s)\|_{L^2_x} \leq \delta \| \nabla Q\|_{L^2_x} \} \wedge \tau(u_0),	\label{def-tau_delta-multi} \\
\tilde{\tau}_N= &\; \inf\{ s\geq 0 \, : \, \| \nabla u(s)\|_{L^2_x} \geq N\} \wedge \tau(u_0). 	\label{def-tildetauK-multi}
\end{align}
Let $\tau = \tau_\delta \wedge \tilde{\tau}_N$, and suppose that
\begin{align}    \label{cond-3}
   \EE\big( V(u_0) \big)& + 2\epsilon +4 \big\{ \EE\big( G(u_0)^2 \big) \big\}^{\frac{1}{2}} \big\{  \EE\big((t\wedge \tau)^2\big) \big\}^{\frac{1}{2}} 
       - \frac{8C_n^{-n} }{n-2}  (\delta^2-\beta_0) \EE\big( (t\wedge \tau)^2\big) <0. 
\end{align}
Then $P(\tau(u_0)\leq t)>0$. 
\end{Prop}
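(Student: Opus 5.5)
We argue by contradiction, following the proof of Proposition \ref{Prop-blow-add}: we assume that $t<\tau(u_0)$ a.s. and show that $\EE\big(V(u(t\wedge\tau))\big)<0$, which is impossible since $V\geq 0$.

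The first step is to combine the two identities of Lemma \ref{lem_v-G-multi}. Inserting \eqref{G(u)} into \eqref{V(u)} gives, for $\tau=\tau_\delta\wedge\tilde\tau_N$,
\[
V(u(t\wedge\tau)) = V(u_0)+4(t\wedge\tau)G(u_0)+\frac{16n}{n-2}\int_0^{t\wedge\tau}\!\!ds\int_0^s H(u(r))dr-\frac{16}{n-2}\int_0^{t\wedge\tau}\!\!ds\int_0^s\|\nabla u(r)\|_{L^2}^2dr+T(t),
\]
where $T(t)=4\int_0^{t\wedge\tau}ds\int_0^s\sum_l\int|u|^2x\cdot\nabla(\phi e_l)\,dx\,d\beta_l$. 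Here we use the coefficient $\frac{4}{n-2}$ in front of the gradient term, as in the last line of the proof of Lemma \ref{lem_v-G-multi}.

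The second step is to bound the energy inside the integral. We do not have energy conservation, so we use the Itô formula for $H(u(r))$ in the multiplicative case from \cite[Proposition 4.5]{deB_Deb_H1}, in the form behind \eqref{E_max_H_multi}. It gives $H(u(r))=H(u_0)+\frac12\int_0^r\int|u|^2f^1_\phi\,dx\,ds+\text{(martingale)}$, where the martingale is $\mathrm{Im}\sum_l\int_0^r\int\bar u\,\nabla u\cdot\nabla(\phi e_l)\,dx\,d\beta_l$. The Itô correction is bounded by $\frac12 M_\phi M(u_0)r$, using conservation of mass.

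Next we take expectations. Using $H(u_0)\leq\beta_0 H(Q)=\beta_0 n^{-1}C_n^{-n}$ and $\|\nabla u(r)\|_{L^2}^2\geq\delta^2C_n^{-n}$ for $r\leq\tau_\delta$, exactly as in \eqref{Hyp-Q-add}, the deterministic part is at most $-\frac{8}{n-2}C_n^{-n}(\delta^2-\beta_0)\EE((t\wedge\tau)^2)$. Cauchy--Schwarz bounds the $G(u_0)$ term by $4\{\EE(G(u_0)^2)\}^{1/2}\{\EE((t\wedge\tau)^2)\}^{1/2}$.

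The noise contributions are handled in three pieces:

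1. The Itô correction, integrated twice in time, yields $\frac{16n}{n-2}\cdot\frac12 M_\phi\EE(M(u_0))\frac{t^3}{6}$. This is of the form covered by \eqref{cond_1}.
2. The energy martingale term, integrated twice, is treated as $\tilde T_{1,1}$ in \eqref{upp-tildeT_11}: Cauchy--Schwarz in $dP$, the Itô isometry, and the bound $|\int\bar u\nabla u\cdot\nabla(\phi e_l)|$ summed in $l$ being at most $M_\phi M(u_0)\|\nabla u\|_{L^2}^2$, with $\|\nabla u\|_{L^2}\leq N$ before $\tilde\tau_N$. This gives $\frac{64n}{15(n-2)}\{\EE M(u_0)\}^{1/2}\sqrt{M_\phi}\,N t^{5/2}$, which is \eqref{cond_2}.
3. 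The term $T(t)$ comes from a stochastic integral. We use Cauchy--Schwarz, writing $|x\cdot\nabla(\phi e_l)|\,|u|^2\leq|x||u|\,|u|\,|\nabla\phi e_l|$. This gives $\big(\sum_l(\int|u|^2x\cdot\nabla\phi e_l)^2\big)^{1/2}\leq\sqrt{M_\phi M(u_0)V(u)}$. Then $ab\leq\frac12(a^2+b^2)$ bounds $|\EE T(t)|$ by a term $C M_\phi\EE(M(u_0))t$ (absorbed into \eqref{cond_1}) plus $\int_0^t\EE V(u(s\wedge\tau))ds$. The finiteness of $\EE\sup_{s\le t} V(u(s\wedge\tau))$ follows as in \cite[p.~85]{deB_Deb_PTRF}, since $\|\nabla u\|_{L^2}\leq N$.

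Finally, Gronwall's lemma gives $\EE V(u(t\wedge\tau))\leq e^{ct}\big[\EE V(u_0)+2\epsilon+4\{\EE G(u_0)^2\}^{1/2}\{\EE(t\wedge\tau)^2\}^{1/2}-\frac{8C_n^{-n}}{n-2}(\delta^2-\beta_0)\EE(t\wedge\tau)^2\big]$, which is negative by \eqref{cond-3}. This is the desired contradiction, so $P(\tau(u_0)\leq t)>0$.

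The main obstacle is the bookkeeping of the stochastic term $T(t)$, that is, getting the absorption into a Gronwall term with the constants matching \eqref{cond_1}. If the direct estimate above does not give exactly those constants, I would follow \cite[(5.62)]{MilRou}, splitting via $|x|\,|u|^2\leq\frac12(|x|^2|u|^2+|u|^2)$.
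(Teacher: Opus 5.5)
Your proposal is the paper's proof step for step. It uses the same combination of \eqref{V(u)}, \eqref{G(u)} (with the correct coefficient $\frac{4}{n-2}$, as you note) and the energy identity of \cite[Proposition~4.5]{deB_Deb_H1}. It has the same compensation \eqref{compensation}, the same three noise terms bounded as in \eqref{estim_T1}--\eqref{estim_T3}, and the same closure by Young's inequality and Gronwall's lemma.

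The closing step, however, does not follow as written, in your argument and in the paper alike. After Young's inequality you have, for every $s\le t$,
\[
\varphi(s)\le \alpha(s)+\int_0^s\varphi(r)\,dr,\qquad \varphi(s):=\EE\big(V(u(s\wedge\tau))\big)\ge 0,
\]
where $\alpha(s)=\EE\big(V(u_0)\big)+2\epsilon+4\EE\big(G(u_0)(s\wedge\tau)\big)-\frac{8C_n^{-n}}{n-2}(\delta^2-\beta_0)\EE\big((s\wedge\tau)^2\big)$.

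Gronwall's lemma gives $\varphi(t)\le\alpha(t)e^{t}$ only if $\alpha$ is nondecreasing. That is impossible here: $\alpha(0)>0$, and \eqref{cond-3} is used precisely to make $\alpha(t)<0$. What Gronwall actually gives is $\varphi(t)\le\alpha(t)+\int_0^t\alpha(s)e^{t-s}\,ds$, and this need not be negative. For example, $\alpha(s)=1-s^2$ and $\varphi(s)=2+2s-e^{s}$ satisfy the relation with equality and $\varphi\ge 0$ on $[0,1.2]$, yet $\alpha(1.2)<0<\varphi(1.2)$. So \eqref{cond-3} produces no contradiction. Your fallback through \cite[(5.62)]{MilRou} does not help, since it still leaves a $V$-dependent term to be absorbed by Gronwall.

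The remedy is not to generate a term $\int_0^t\EE V$ at all. In the multiplicative case \eqref{V(u)} contains no stochastic integral. Together with $|G(u)|\le\sqrt{V(u)}\,\|\nabla u\|_{L^2_x}$ and $\|\nabla u\|_{L^2_x}\le N$ on $[0,\tau]$, this gives the pathwise bound $\sup_{s\le t}V(u(s\wedge\tau))\le\big(V(u_0)+4N^2t\big)e^{t}$ of \cite{deB_Deb_AnnProb}. The paper uses this bound only to get finiteness.

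Use it instead to bound the $G$-martingale directly. Apply the Davis inequality, then $\sum_l\big(\int_{\RR^n}|u|^2\,x\cdot\nabla(\phi e_l)\,dx\big)^2\le M_\phi M(u_0)V(u)$ and the pathwise bound inside the square root, and only then Cauchy--Schwarz in $dP$. This gives
\[
|T_3|\le 8\,t^{3/2}\Big\{M_\phi\,\EE\big(M(u_0)\big)\,e^{t}\big(\EE(V(u_0))+4N^2t\big)\Big\}^{1/2}.
\]
One more smallness condition on $M_\phi$, of the same kind as \eqref{cond_1}--\eqref{cond_2}, makes this $\le\epsilon$. Then $\EE\big(V(u(t\wedge\tau))\big)$ is bounded directly by the left-hand side of \eqref{cond-3} with $3\epsilon$ in place of $2\epsilon$. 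There is no exponential factor and no Gronwall step, and the contradiction follows.

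This route also removes a second slip in \eqref{estim_T3}, which your item~3 leaves ambiguous. If you use the It\^o isometry first, you face $\EE\big(M(u_0)\int_0^sV(u(r\wedge\tau))\,dr\big)$. This does not factor as $\EE(M(u_0))\,\EE\big(\int_0^sV(u(r\wedge\tau))\,dr\big)$ when $u_0$ is random.
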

\begin{proof}
Assume that $t<\tau(u_0)$ a.s. and write $V(u)$ using \eqref{V(u)}, \eqref{G(u)} and the following equation for the energy $H(u(t))$  (see \cite[Proposition~4.5]{deB_Deb_H1}) 
\[ H(u(t)) = H(u_0) - Im  \int_{\RR^n} \! \int_0^t\bar{u}(s,x) \nabla u(s,x)\!\cdot\!\nabla dW dx + \frac{1}{2} \sum_{l\geq 0}\!\!  \int_0^t \!\! \int_{\RR^n}\!\!  |u(s,x)|^2 |\nabla (\phi e_l)(x)|^2 dx ds.
\] 
Then using \eqref{V(u)} and \eqref{G(u)}, we obtain
\begin{align*}
V(u(t&\wedge \tau)) = V(u_0) + 4 G(u_0) (t\wedge \tau)  + \frac{8n}{n-2}  H(u_0) (t\wedge \tau)^2  - \frac{16n}{n-2}  \int_0^{t\wedge \tau} ds \int_0^s \|\nabla u(r)\|_{L^2_x}^2 dr \\
& + \frac{8n}{n-2}  \int_0^{t\wedge \tau} \int_0^s \int_0^r \int_{\RR^n} |u(r_1,x)|^2 f^1_\phi(x) \,
dx dr_1 dr ds\\
&-\frac{16n}{n-2}  \int_0^{t\wedge \tau} \int_0^s \int_0^r {\rm Im}\sum_{l\geq 0} \int_{\RR^n}  \bar{u}(r_1,x) \nabla u(r_1,x)\!\cdot\!\nabla(\phi e_l)(x) \, dx
d\beta_l(r_1) dr ds\\
& +4 \int_0^{t\wedge \tau}  \int_0^s \sum_{l\geq 0} \int_{\RR^n} |u(r,x)|^2 \, x\!\cdot\!\nabla(\phi e_l)(x) \, dx d\beta_l(r) ds.
\end{align*}
Thus,  taking expected values, we deduce 
\begin{align}    
  & \EE\big(V(t\wedge \tau)\big)   \leq  \EE\big( V(u_0) \big) + 4   \EE\big( G(u_0) (t\wedge \tau) \big) 
\nonumber \\
  &  \qquad 
  + \frac{8n}{n-2}  \EE\big(   H(u_0)(t\wedge \tau) ^2\big)-\frac{16}{n-2}  \EE\int_0^{t\wedge \tau} ds \int_0^s \|\nabla u(r)\|_{L^2}^2 dr + \sum_{j=1}^3 T_j,  \label{E:MEV-1b}
\end{align}
where 
\begin{align*}
 T_1= &\frac{8n}{n-2}  \, \EE\Big(  \int_0^{t\wedge \tau} ds \int_0^s dr \int_0^r \int_{\RR^n}   |u(r_1,x)|^2 f^1_\phi(x) dx  \, dr_1 \Big), \\
T_2=&  -\frac{16n}{n-2}  \EE\Big(   \int_0^{t\wedge \tau}ds \int_0^s dr\int_0^r {\rm Im}\sum_{l\geq 0} \int_{\RR^n} 
\bar{u}(r_1,x)\,  \nabla u(r_1,x)\!\cdot\! \nabla(\phi e_l)(x) dx
d\beta_l(r_1) \Big),\\
T_3=&4  \EE\Big(\int_0^{t\wedge \tau}  ds\int_0^s \sum_{l\geq 0} \int_{\RR^n} |u(r,x)|^2 \, x\!\cdot\!\nabla(\phi e_l)(x) dx d\beta_l(r) \Big).
\end{align*}
Since for the time $r\leq \tau_\delta$ a.s., we have
\[  \|\nabla u(r)\|_{L^2_x}^2 \geq \delta^2  \|\nabla Q\|_{L^2_x}^2, \quad \mbox{\rm  while}\quad 
 H(u_0) \leq \beta_0  H(Q),\]
using  the identity \eqref{nabla-H-Q},
we deduce that  
\begin{align}\label{compensation}
\frac{8n}{n-2} & \EE\big(  H(u_0) (t\wedge \tau)^2\big) - \frac{16}{n-2} \,
\EE\Big(\int_0^{t\wedge \tau} ds \int_0^s \|\nabla (u(r))\|_{L^2}^2 dr \Big) 
\nonumber \\
& \leq \frac{8n}{n-2}  \beta_0 \|\nabla Q\|_{L^2_x}^2 \, \EE\big( (t\wedge \tau)^2\big)  - \frac{16}{n-2}  \EE\Big( \int_0^{t\wedge \tau} ds \int_0^s
\delta^2 \|\nabla Q\|_{L^2}^2 dr \Big) \nonumber \\
&\leq -\frac{8}{n-2} C_n^{-n} (\delta^2-\beta_0)  \, \EE\big( (t\wedge \tau)^2\big).
\end{align}
The upper estimates  \eqref{E:MEV-1b} and \eqref{compensation} yield 
\begin{align}\label{MEV-2}
  \EE\big( V(t\wedge \tau)\big)  \leq & \EE\big(   V(u_0)\big)  + 4 \EE\big( G(u_0)  (t\wedge \tau)\big)  
   -\frac{8}{n-2} C_n^{-n}  (\delta^2-\beta_0)  \EE\big( (t\wedge \tau)^2\big) + \sum_{i=1}^3 T_i.
\end{align}
We now estimate the last three terms. Since $M_\phi = \|f^1_\phi\|_{L^\infty}$, we have
\begin{equation}    \label{estim_T1}
T_1\leq   
\frac{8}{n-2} \EE\big( M(u_0)\big) \,  M_\phi \int_0^t ds \int_0^s dr \int_0^r dr_1 = \frac{32}{3(n-2)}  \EE\big( M(u_0)\big) \, M_\phi t^3. 
\end{equation}
Using Fubini's Theorem and the Cauchy-Schwarz inequality with respect to $dP$   and then to $dx$, we obtain
\begin{align}\label{estim_T2}
|T_2|\leq & \frac{16 n}{n-2}  \int_0^t ds \int_0^s dr \, \EE \Big( \Big| \int_0^{r\wedge \tau} \sum_{l\geq 0} \int_{\RR^n}
 \bar{u}(r_1,x) \nabla u(r_1,x)\!\cdot\!\nabla (\phi e_l) (x) dx d\beta_l(r_1)\Big| \Big) \nonumber \\
\leq & \;  \frac{16 n}{n-2}  \int_0^t ds \int_0^s dr \Big\{ \EE\Big( \int_0^{t\wedge \tau}  \sum_{l\geq 0}  \Big| 
\int_{\RR^n} \bar{u}(r_1,x) \nabla u(r_1,x)\!\cdot\!\nabla (\phi e_l) (x) dx\Big|^2 dr_1 \Big) \Big\}^{\frac{1}{2}}\nonumber \\
\leq &\;   \frac{16n}{n-2} \int_0^t ds \int_0^s dr \Big\{ \EE\Big[ \int_0^{t\wedge \tau}  \sum_{l\geq 0}  \Big( \int_{\RR^n} |\bar{u}(r_1,x)|^2 
|\nabla (\phi e_l)(x)|^2 dx \Big) \nonumber \\
&\hspace{2cm}  \times \Big( \int_{\RR^n} |\nabla u(r_1,x)|^2 dx\Big) dr_1 \Big) \Big] \Big\}^{\frac{1}{2}} \nonumber \\
\leq & \; \frac{16 n}{n-2}  \big\{ \EE\big( M(u_0)\big) \big\}^{ \frac{1}{2}} \sqrt{M_\phi} \int_0^t ds \int_0^s dr \Big\{ \EE\big( {N^2} (r\wedge \tau)\big) \Big\}^{\frac{1}{2}}
\nonumber \\
\leq &\;  \frac{16n}{n-2}\big\{ \EE\big( M(u_0)\big) \big\}^{ \frac{1}{2}}    \sqrt{M_\phi} N \frac{4}{15} t^{\frac{5}{2}}.
\end{align}
Similar computations yield
\begin{align*}       
 |T_3|\leq &\;  4 \int_0^t ds \,\EE\Big( \Big| \int_0^{s\wedge \tau} \sum_{l \geq 0}  \int_{\RR^n} |u(r,x)|^2 \,x\!\cdot\!\nabla(\phi e_l)(x) dx d\beta_l(r) \Big| \Big)
 \nonumber \\
 \leq & \; 4 \int_0^t ds \Big\{ \EE\Big( \int_0^{s\wedge \tau} \sum_{l\geq 0} \Big|\int_{\RR^n} |u(r,x)|^2 \, x\!\cdot\! \nabla(\phi e_l)(x) dx\Big|^2  dr  \Big)\Big\}^{\frac{1}{2}}
 \nonumber \\
 \leq & \; 4 \int_0^t ds \Big\{ \EE\Big( \int_0^{s\wedge \tau} \sum_{l\geq 0}  \Big( \int_{\RR^n} |u(r,x)|^2 |\nabla(\phi e_l)(x)|^2 dx\Big)
 \Big( \int_{\RR^n }|x|^2 |u(r,x)|^2 dx\Big) dr \Big] \Big\}^{\frac{1}{2}} \nonumber \\
 \leq &\;  4 \big\{ \EE\big( M(u_0)\big) M_\phi \big\}^{ \frac{1}{2}}  \int_0^t ds \Big\{ \EE\int_0^s V\big( u(r\wedge \tau)\big) dr \Big) \Big\}^{\frac{1}{2} }\nonumber \\
 \leq & \; 4 \big\{ \EE\big( M(u_0)\big)  M_\phi \big\}^{ \frac{1}{2}}  \sqrt{t} \Big\{ \int_0^t ds \int_0^s {\EE} \big( V(u(r\wedge \tau)) dr \big) \Big\}^{\frac{1}{2}}.
\end{align*}
Young's inequality implies that for $\bar{\epsilon} = t^{-1} $,   
\begin{equation}        \label{estim_T3}
    |T_3| \leq \int_0^t  \EE\big( V(u(r\wedge \tau))\big) dr + 4 \EE\big(M(u_0) \big) M_\phi \,t. 
\end{equation} 
Collecting estimates for $T_1, T_2, T_3$ and recalling the bounds \eqref{cond_1}-\eqref{cond_2}, we obtain 
$$
\Big| \sum_{i=1}^3 T_i \Big| \leq \int_0^t  \EE\big( V(u(s\wedge \tau))\big) ds + 2\epsilon. 
$$
Putting this into the estimate \eqref{MEV-2}, we get an inequality on the expected value of the variance, from which we would like to extract the bound on it. 
However, before proceeding (and applying Gronwall's inequality), we need to make sure that this expected value is bounded a.s.
For that we refer to \cite{deB_Deb_AnnProb}: since $\tau \leq \tilde{\tau}_N$ and $t<\tau (u_0)$ a.s., the upper estimate (6.2) on page 1095 in \cite{deB_Deb_AnnProb} 
implies that $u(s\wedge \tau)\in \Sigma$ a.s. for
every $s\leq t$ and 
\[ \sup_{s\leq t} V(s\wedge \tau)\leq [4N^2t+V(u_0)] e^t \quad \mbox{\rm a.s.}\]
Therefore, $\sup_{r\leq t} \EE(V(r\wedge \tau)) <\infty $, and Gronwall's lemma yields { for every $t>0$} 
\begin{align}       \label{estim_EV_Gronwall}
    \EE\big[ V\big( u(t\wedge \tau)\big) \big] \leq &\big[  \EE\big( V(u_0)\big) + 2\epsilon +4 \EE \big(G(u_0) (t\wedge \tau)\big) 
         - \frac{8C_n^{-n} }{n-2}  (\delta^2-\beta_0) \EE\big( (t\wedge \tau)^2\big)   ] e^t.
\end{align}
Using the assumption \eqref{cond-3}, we deduce that $\EE\big( V(u(t\wedge \tau))\big) <0$, which brings a contradiction, since
$V(u)\geq 0$ for every $u\in \Sigma$. Therefore,  $P(  \tau(u_0) \leq t)>0$, which completes the proof. 
\end{proof}
\medskip

\noindent {\it Proof of Theorem~\ref{th_blowup2}}. ~
We suppose that $\tau(u_0)=\infty$ a.s. and look for a contradiction. 

As before, we use the monotonicity properties of the function $f$ defined by  \eqref{def-f}.
Since $\|\nabla u(t)\|_{L^2_x}$ is a.s. continuous in $t$ and initially we have $x=\|\nabla u_0\|_{L^2_x} $
 greater than $\|\nabla Q\|_{L^2_x} $ (that is, $x > x_c$ implying that $x$ is located on the decreasing side of the graph of $f$); hence 
there exists $\delta \in (1,\delta_0)$ and $\gamma \in (\beta_0,1)$ such that $\gamma(1-2\epsilon)>\beta_0$ for some $\epsilon >0$, and for any $t>0$  we have a.s. 
\begin{equation}     \label{gamma-delta}
\sup_{s\in [0,t] }H(u(s))  \leq \gamma H(Q) \; \Longrightarrow 
\inf_{s\in [0,t]}\|\nabla u(s)\|_{L^2_x}  \geq \delta \|\nabla Q\|_{L^2_x}  .
\end{equation}

Let $\tau_\delta$ be as in Proposition \ref{Prop-blow-multi} and define the stopping time $\tilde \sigma_\gamma$ by 
\begin{equation}\label{sigma_gamma1} \tilde{\sigma}_\gamma = \inf\{ s\geq 0 : H(u(s))
\geq \gamma H(Q) \} . 
\end{equation} 

Then \eqref{gamma-delta} implies that $\tilde{\sigma}_\gamma \leq \tau_\delta$.
As $T\to \infty$, we have $T\wedge \tilde\sigma_\gamma \to \tilde\sigma_\gamma$, 
and  the monotone convergence theorem  implies $\EE\big((T\wedge \tau_\delta)^2 \big)
\to \EE\big(\tau_\delta^2\big)$. 
We consider two cases, depending on the size of the last quantity. 
\smallskip

$\bullet$ If $\EE\big(\tau_\delta^2\big)=\infty$, given any $M>0$ there exists $T$ large enough, call it $T_0>0$, to ensure $\EE\big((T_0\wedge \tau_\delta)^2\big) \geq M^2$.\smallskip

$\bullet$ If $\EE(\tau_\delta^2)<\infty$, given any $\lambda \in (0,1)$, we may choose $T_0$ large enough to have 
$\EE\big((T_0 \wedge \tau_\delta)^2\big) \geq \lambda^2 \EE(\tau_\delta^2)$. 

Furthermore,  as shown in the proof of Theorem \ref{th_blowup-add}, since $\|\nabla u(t)\|_{L^2_x} \to \infty$ a.s. when $t\to \infty$, 
we deduce that for $\tilde{\tau}_N$ defined by \eqref{def-tildetauK-multi},
 we have $\tilde{\tau}_N \to \infty$ a.s. when  $N \to \infty$. 
Hence, there exists $N_0$ such that for $\bar{\tau}_0=\tau_\delta \wedge \tilde{\tau}_{N_0}$ we have
\[ \EE \big((T_0 \wedge \bar{\tau}_0)^2\big) = \EE\big((T_0 \wedge \tau_\delta \wedge \tilde{\tau}_{N_0})^2\big) \geq 
\lambda^2 \EE\big((T_0 \wedge \tau_\delta)^2\big),\]
which implies that either $\EE\big((T_0 \wedge \bar{\tau}_0 )^2\big) \geq \lambda^2 M^2$
or $\EE\big((T_0 \wedge \bar{\tau}_0)^2\big) \geq \lambda^4 \EE(\tau_\delta^2)$. 

Now, that $T_0$ and $N_0$ have been defined, given $\epsilon >0$, choose $M_\phi$ small enough to ensure that
\begin{equation}    \label{cond1-2}
   4 \EE\big( M(u_0)\big) M_\phi T_0 \Big( 1+\frac{8 T_0^2}{3}\Big) \leq \epsilon \quad \mbox{\rm and} \quad \frac{64 n}{15 (n-2) } \big\{ \EE\big( M(u_0)\big) \big\}^{\frac{1}{2}}   \sqrt{M_\phi} N_0 T_0^{\frac{5}{2}} 
\leq \epsilon, 
\end{equation}
and thus, the conditions \eqref{cond_1} and \eqref{cond_2} are satisfied.  

We next show that 
there exists $\epsilon>0$ (close to 0) and $T>0$ (large enough) such that for $\bar{\tau}_0= \tau_\delta \wedge \tilde{\tau}_{N_0}$, the Cauchy-Schwarz inequality implies 
$$
\EE\big(  V(u_0) \big)  + 2\epsilon +4   \big\{\EE \big(G(u_0) ^2\big) \big\}^{\frac{1}{2}}  \big\{ \EE\big((T\wedge \bar{\tau}_0)^2\big) \big\}^{\frac{1}{2}} 
- \frac{8}{n-2} C_n^{-n}  (\delta^2-\beta_0)  \EE\big( (T\wedge \bar{\tau}_0)^2\big) <0,
$$
i.e., the expression in square brackets on the right-hand side of \eqref{estim_EV_Gronwall} is negative  for $T$ large. 

Set 
$$
a= \frac{8}{n-2} C_n^{-n}  (\delta^2-\beta_0), 
\quad b=4 \big\{ \EE\big( G(u_0)^2\big) \big\}^{\frac{1}{2}}  \quad \mbox{\rm and} \quad
c= \EE\big( V(u_0)\big)  +2\epsilon, 
$$
and denote $X:= \big\{ \EE\big( (T\wedge \bar{\tau}_0)^2\big) \big\}^{\frac{1}{2}}$. 
We look for $\epsilon>0$ and $T>0$ such that  $-a X^2 +bX+c <0$. 
Indeed, if $-aX^2+bX+c<0$,  the condition \eqref{cond-3} is satisfied with $T\wedge \bar{\tau}_0$ instead of $t\wedge \tau$, 
and Prop. \ref{Prop-blow-multi} implies that $P(T<\tau(u_0)) >0$. 
Let $X_1<X_2$ denote the roots of the polynomial $-aX^2+bX+c$. 
\smallskip

$\bullet$ If $\EE(\tau_\delta^2)=\infty$, we may choose $M$ large enough and $\lambda $ close to one to have $M\lambda >X_2$, which implies 
$X>X_2$. 

$\bullet$ If $\EE(\tau_\delta^2)<\infty$, then $\tilde{\sigma}_\gamma \leq \tau_\delta <\infty$ a.s.
Thus,  the a.s.  continuity  of the energy $H(u(\cdot))$ and  the definition of $\tilde{\sigma}_\gamma$ in \eqref{sigma_gamma1} imply
$H(u(\tilde{\sigma}_\gamma))   = \gamma H(Q) $.
Using \eqref{cond-u_0-Q-blowup} and $f^1_\phi = \sum_{l\in \NN} |\nabla (\phi e_l)|^2$,  
we deduce that for any stopping time $\tau $, we have 
\begin{align}  \label{Mu_0H(u)}
  H(u(\tau))= &\beta_0 H(Q) +\frac{1}{2}  \int_0^\tau ds \int_{\RR^n} |u(s,x)|^2 f^1_\phi(x) dx  -   {\rm  Im}\big(I(\tau)\big), 
\end{align}
where  
\[ I(\tau):=  \int_0^\tau \sum_{l\geq 0}  \int_{\RR^n} 
\bar{u}(s,x) \nabla u(s,x)\!\cdot\!\nabla(\phi e_l)(x) dx d\beta_l(s).
\]

Let $\sigma_0$ be the stopping time defined by
\[ \sigma_0 = \inf\{ s\geq 0 \; : \; H(u(s)) \leq 0\} .\] 
Then the Cauchy-Schwarz inequality implies 
\begin{align} \label{square-int-mart}
 \EE\big( \big|   {\rm Im}\big(  I(\tilde{\sigma}_\gamma \wedge \sigma_0 \wedge \tilde{\tau}_k) \big) \big|^2 \big)
\leq \EE\Big( \int_0^{\tilde{\sigma}_\gamma\wedge \sigma_0 \wedge \tilde{\tau}_k} \|f^1_\phi\|_{L^\infty_x}  M(u_0) \, \|\nabla u(s)\|_{L^2_x}^2 ds\Big) <\infty. 
\end{align}
Therefore, for any $k>0$  the above stochastic integral is square integrable, hence, centered; thus, the Cauchy-Schwarz inequality yields
\[  \EE\big(H(u(\tilde{\sigma}_\gamma\wedge  \sigma_0 \wedge \tilde{\tau}_k))\big) \leq \beta_0 H(Q)  + \frac{1}{2}  M_\phi
\big\{ \EE\big( M(u_0)^2 \big) \big\}^{\frac{1}{2}} \big\{  \EE\big((\tilde{\sigma}_\gamma)^2\big)\big\}^{\frac{1}{2}} .
\]
For a fixed $k>0$ recall that $\tilde{\tau}_k = \inf\{ s\geq 0 : \|\nabla u(s)\|_{L^2} \geq k\}$. Then as $k\to \infty$, we get $\tilde{\tau}_k\to \infty $ a.s.
By a.s. continuity  of $H(u(\cdot))$ we have  $ H(u(\tilde{\sigma}_\gamma \wedge \sigma_0 \wedge \tilde{\tau}_k))\to  H(u(\tilde{\sigma}_\gamma \wedge \sigma_0))$ 
as $k\to \infty $  and 
$$ 
 H(u(\tilde{\sigma}_\gamma \wedge \sigma_0)) \leq  \gamma H(Q)   1_{\{ \tilde{\sigma}_\gamma \leq  \sigma_0\} }
+ 0 \cdot   1_{\{ \sigma_0 < \tilde{\sigma}_\gamma\}} \qquad \mbox{\rm a.s.}
$$
Furthermore,   
$  H(u(\tilde{\sigma}_\gamma \wedge \sigma_0 \wedge \tilde{\tau}_k)) \in [0, \gamma\; H(Q)] $,  and hence, 
the dominated convergence theorem and the Cauchy-Schwarz inequality imply 
\begin{equation} 		\label{upper_1}
\gamma H(Q) \; P( \tilde{\sigma}_\gamma \leq  \sigma_0)  \leq \beta_0 H(Q)  + \frac{1}{2}
 M_\phi  \big\{ \EE\big( M(u_0)^2 \big) \big\}^{\frac{1}{2}} \big\{  \EE\big( (\tilde{\sigma}_\gamma)^2\big) \big\}^{\frac{1}{2}}.
\end{equation} 

We next prove that when the strength of the noise is small enough,  $P( \tilde{\sigma}_\gamma \leq  \sigma_0) $ can be made as close to one as desired. 
Let $\epsilon \in (0,\frac{1}{2})$ be chosen such that $\gamma (1-2\epsilon) >\beta_0$. 
We may choose $k_0$  and $T_0$ large enough 
to have $P(\tilde{\sigma}_\gamma
\geq  \tilde{\tau}_{k_0} \wedge T_0 ) \leq \epsilon$. 

Furthermore, on the set
$\{\sigma_0\leq \tilde{\sigma}_\gamma \wedge \tilde{\tau}_{k_0}\wedge T_0\}\subset \Omega$,  
the identity \eqref{Mu_0H(u)} implies  
$0\geq \beta_0 H(Q)  -  {\rm Im}\big( I(\sigma_0)\big)$, so we have the following inclusion:
\begin{equation}		
\label{inclusion}
 \{\sigma_0 \leq \tilde{\sigma}_\gamma \wedge \tilde{\tau}_{k_0} \wedge T_0\} \subset \Big\{ \sup_{s\leq \tilde{\sigma}_\gamma \wedge \tilde{\tau}_{k_0}\wedge T_0}
   {\rm Im}\big(I(s)\big) \geq
\beta_0 H(Q)  \Big\}.
\end{equation}
Then the Markov and Davis inequalities imply
\begin{align}		\label{estim-IM(u)}
P\Big(& \sup_{s\leq \tilde{\sigma}_\gamma \wedge \tilde{\tau}_{k_0}\wedge T_0}\; {\rm Im}\big( I(s)\big) \Big)  \leq \frac{1}{\beta_0 H(Q) }
 \EE\Big( \sup_{s\leq \tilde{\sigma}_\gamma \wedge \tilde{\tau}_{k_0}\wedge T_0}
 |I(s)| \Big)  \nonumber \\
\leq & \; \frac{3  }{\beta_0 H(Q) } \EE\Big(  \Big\{ \int_0^{\tilde{\sigma}_\gamma \wedge \tilde{\tau}_{k_0}\wedge T_0} 
\sum_l \Big( \int_{\RR^n} \bar{u}(s,x) \, \nabla u(s,x) \!\cdot\!\nabla (\phi e_l)(x) dx\Big)^2 ds 
\Big\}^{\frac{1}{2}} \Big) \nonumber  \\
\leq &\; \frac{3  }{\beta_0 H(Q) } \EE\Big( \Big\{ \int_0^{\tilde{\sigma}_\gamma \wedge \tilde{\tau}_{k_0}\wedge T_0}  M(u_0) \|f^1_\phi\|_{L^\infty_x} 
 \Big(\int_{\RR^n} |\nabla u(s)|^2 dx\Big) ds  \Big\}^{\frac{1}{2}} \Big) \nonumber \\
 \leq &\; \frac{3 \sqrt{ k_0 T_0}   \big\{\EE\big(  M(u_0)\big) \big\}^{\frac{1}{2}} }{\beta_0 H(Q) }  \sqrt{M_\phi}, 
\end{align}
where in the last bound we used the definition of $\tilde{\tau}_{k_0}$ and in the one before the last one, the Cauchy-Schwarz inequality with respect to $dx$, 
and  the conservation of mass  identity $M(u(s))=M(u_0)$ a.s. 
Therefore, if $M_\phi$ is small enough, we have $P(\sigma_0 \leq \tilde{\sigma}_\gamma \wedge \tilde{\tau}_{k_0} \wedge T_0) \leq \epsilon$.
Finally,  we obtain the upper bound
\[ P( \sigma_0 < \tilde{\sigma}_\gamma ) \leq P(\sigma_0 \leq \tilde{\sigma}_\gamma  \wedge \tilde{\tau}_{k_0} \wedge T_0) + P(\tilde{\sigma}_\gamma \geq \tilde{\tau}_{k_0} \wedge T_0) \leq 2\epsilon.
\] 
For $M_\phi$ small enough,  the inequality \eqref{upper_1}  implies 
\begin{align*}
 (1-2\epsilon) \gamma H(Q)  \leq&  \;   \beta_0 H(Q) + \frac{1}{2}
  M_\phi \,   \big\{ \EE\big( M(u_0)^2 \big) \big\}^{\frac{1}{2}} \big\{  \EE\big( (\tilde{\sigma}_\gamma)^2\big) \big\}^{\frac{1}{2}}.
\end{align*} 
{Since $\tilde{\sigma}_\gamma \leq \tau_\delta$ and $\EE\big( (T_0 \wedge \tau_\delta)^2\big) \geq \lambda^2 \EE(\tau_\delta^2)$, for $M_\phi$  small enough and $\lambda$
close enough to 1}, we have 
\[ X = \Big\{ \EE\big( (T\wedge \bar{\tau}_0)^2\big) \big\}^{\frac{1}{2}} \geq \frac{2\lambda^2 \big[ \gamma(1-2\epsilon) -\beta_0\big]  H(Q) }{
M_\phi \big\{ \EE\big( M(u_0)^2\big) \big\}^{\frac{1}{2}}}  > X_2.
\]
Hence, the condition \eqref{cond-3} is satisfied, which concludes the proof. 
\qed
\medskip




\begin{thebibliography}{99}

\bibitem{adams}  
R. A. Adams and J. J. F. Fournier,    
{\em Sobolev spaces}, 
Pure and Applied Mathematics Series, 2nd ed., Academic
Press, 2003. 

\bibitem{BRZ2016}
V. Barbu, M. R\"ockner and D. Zhang, 
Stochastic nonlinear Schr\"odinger equations, 
{\em Nonlinear Anal.}, 136, 168–194 (2016).


\bibitem{BRZ2017}
V. Barbu, M. R\" ockner and D. Zhang, 
Stochastic nonlinear Schr\"odinger equations: No blow-up in the non-conservative case,
{\em J. Differential Equations}, 263, 7919-7941 (2017). 


\bibitem{Bog}  
V. I. Bogachev,  
Gaussian Measures, 
{\em Mathematical Surveys and Monographs}, vol. 62,
AMS, Providence, R.I., 1998. 

\bibitem{Brz_1997} 
Z. Brze{\'z}niak,  
{On  stochastic  convolution  in  {B}anach  spaces  and applications}, 
{\em Stochastics and Stochastics Reports}, {61},  n. 3-4, 245--295 (1997).


\bibitem{Brz_Mil} 
Z. Brze{\'z}niak, and A. Millet,  
{On the stochastic {S}trichartz estimates and the stochastic
nonlinear {S}chr\"odinger equation on a compact {R}iemannian manifold}, {\em Potential Anal.}, {41}, n.2, 269--315 (2014).

\bibitem{Caz-book}
T. Cazenave, 
\emph{Semilinear Schr\"odinger equations}, 
Courant Lecture Notes in Mathematics, 10. NYU, Courant Inst.; AMS, Providence, RI, 2003. xiv+323 pp.

\bibitem{Caz_Wei} 
T. Cazenave and F. Weissler, 
The Cauchy problem for the critical nonlinear Schr\"odinger equation in $H^s$,
{\em Nonlinear Analysis, Theory, Methods \& Applications}, {14-10},  807--836  (1990).

\bibitem{CW1988}
T. Cazenave and F. Weissler, 
The Cauchy problem for the nonlinear Schr\"odinger equation in $H^1$,
{\em Manuscripta Math.}, {61-4},  {477–494} (1988).


\bibitem{CW1990}
T. Cazenave and F. Weissler,
{The Cauchy problem for the critical nonlinear Schr\"odinger equation in $H^s$}, 
{\em Nonlinear Anal.}, 14, no. 10, 807–836 (1990).


\bibitem{DaP-Zab} 
G. Da Prato and J. Zabczyk, 
{Stochastic equations in infinite dimensions}, 
{\em Encyclopedia of Mathematics and its Application},
Cambridge: Cambridge University Press, 1992. 


\bibitem{deB_Deb_CMP} 
A. de Bouard and A. Debussche, 
{A Stochastic Nonlinear Schr\"odinger Equation with Multiplicative Noise}, 
{\em Comm. Math. Phys.}, {205}, 161-181 (1999).

 
 
 
\bibitem{deB_Deb_PTRF} 
A. de Bouard and A. Debussche, 
On the effect of a noise on solutions of the focusing supercritical nonlinear Schr\"odinger equation,
{\em Probab. Theory Relat. Fields}, {123}, 76--96  (2002).  

\bibitem{deB_Deb_H1} 
A. de Bouard and A. Debussche, 
The Stochastic Nonlinear Schr\"odinger Equation in $H^1$, 
{\em Stochastic Analysis and Applications}, {21-1}, 97--126 (2003). 


\bibitem{deB_Deb_AnnProb} 
A. de Bouard and A. Debussche, 
Blow-up for the stochastic Nonlinear Schr\"odinger equation with multiplicative noise,
{\em The Annals in Probability}, {33-3}, 1078--1110 (2005).

\bibitem{DR2015}
T. Duyckaerts, J. Holmer and S. Roudenko, 
Going beyond the threshold: scattering and blow-up in the focusing {NLS} equation,
{\em Comm. Math. Phys.},
v. {334}, no.3, {1573--1615}  (2015).


\bibitem{DHR}
T. Duyckaerts, J. Holmer and S. Roudenko, 
{Scattering for the non-radial 3D cubic nonlinear Schr\"odinger equation},
{\em Math. Res. Lett.}, 
v. {15}, {no. 6}, {1233-1250} (2008).

\bibitem{Fa_Li_Po} 
L. Farah, F. Linares, and G. Ponce,  
The supercritical generalized KFD equation: global well-posedness in the energy space and below, 
{\em Math. Res. Lett.}, {18-2}, 357--377 (2011). 



\bibitem{GV1979}
J. Ginibre and G. Velo,
On a class of nonlinear Schrödinger equations. I. The Cauchy problem, general case.
{\em J. Functional Analysis}, {32-1},  1–32 (1979).

\bibitem{GV1985}
J. Ginibre and  G. Velo,
The global Cauchy problem for the nonlinear Schr\"odinger equation revisited.
{\em Ann. Inst. H. Poincaré Anal. Non Linéaire}, {2-4},  309–327  (1985).

\bibitem{Hol_Rou_2007} 
J. Holmer and S. Roudenko,  
On blow-up solutions to the 3D cubic nonlinear Schr\"odinger equation, 
{\em Applied Mathematics Research eXpress (AMRX)}, Article ID 004, 29 pages (2007).
 
\bibitem{Hol_Rou_2008} 
J. Holmer and S. Roudenko, 
A sharp condition for scattering of the radial 3{D} cubic nonlinear {S}chr\"odinger equation, 
{\em Commun. Math. Phys.}, {282}, 435--467 (2008).

\bibitem{K1987}
T. Kato,
On nonlinear Schrödinger equations, 
{\em Ann. Inst. H. Poincar\'e,  Phys. Th\'eor.}, {46 -1},  113–129  (1987).

\bibitem{KenMer} 
C. E. Kenig and F. Merle,  
Global well-posedness, scattering and blow-up for the energy-critical, focusing, non-linear Schr\"odinger equation in the radial case, 
{\em Invent. Math.},  {166}, 645-675 (2006).

\bibitem{Kwa_Szy} 
S. Kwapie\'n and B. Szyma\'nski, 
Some remarks on Gaussian measures in Banach spaces, 
{\em Probab. Math. Statist.}, {1-1}  59-65 (1980).

\bibitem{MilRou} 
A. Millet and S. Roudenko, Well-posedness of the focusing stochastic nonlinear  Schr\"odinger equation: $L^2$-critical and supercritical cases, 
{arXiv:2511.07072}.
 

\bibitem{Oh_Oka} 
T. Oh and M. Okamoto, 
On the stochastic Nonlinear Schr\"odinger equations at critical regularities, 
{\em Stoch. Partial Differ. Equ. Anal. Comput.},
{8-4}, 869--894 (2020).



\bibitem{Temam} 
R. Temam, 
Sur un probl\`eme non lin\'eaire, 
{\em J. Math. Pures Appl.}, {48}, 159-172 (1969).

\bibitem{Tao} 
T. Tao, 
Nonlinear Dispersive Equations: local and global analysis, 
{\em CBMS}, {106},
American Mathematical Society, 2006.

\bibitem{T1987}
Y. Tsutsumi,
$L^2$-solutions for nonlinear Schr\"odinger equations and nonlinear groups,
{\em Funkcial. Ekvac.}, {30-1},  115–125  (1987).


\bibitem{vanNee} 
J. van Neerven, 
$\gamma$-radonifying operators - a survey, 
{The {AMSI}-{ANU} {W}orkshop on {S}pectral {T}heory and {H}armonic {A}nalysis},
{\em Proc. Centre Math. Appl. Austral. Nat. Univ.}, {44}, {1--61} (2010).
      




\bibitem{Wein} 
M. I. Weinstein, 
Nonlinear Schr\"odinger equations and sharp interpolation estimates, 
{\em Comm. Math. Phys.}, {87}, 567-576 (1983). 

\bibitem{Zh}  
P. Zhidkov, Korteweg-de Vries and nonlinear Schr\"odinger equations: qualitative theory, 
{\em Lecture Notes in Mathematics}, 1756. Springer-Verlag, Berlin, 2001. vi+147 pp
\end{thebibliography}
\end{document}